\providecommand{\U}[1]{\protect\rule{.1in}{.1in}}
\newtheorem{theorem}{Theorem}
\newtheorem{lemma}[theorem]{Lemma}
\newtheorem{definition}{Definition}
\newtheorem{notation}[definition]{Notation}
\newtheorem{example}[definition]{Example}
\newenvironment{proof}{\noindent{\em Proof:}}{$\Box$~\\}
\begin{document}

\title{ Optimality of Curtiss Bound on \\Poincar\'e Multiplier for \\Positive Univariate Polynomials \\}
\author{
Hoon Hong~\footnote{
Department of Mathematics, 
North Carolina State University,
Raleigh NC 27695 USA, 
hong@ncsu.edu}
and 
Brittany Riggs~\footnote{
Corresponding author.
Department of Mathematics,
Elon University,
Elon NC 27244 USA,
briggs@elon.edu}
}
\maketitle

\begin{abstract}
Let $f$ be a monic univariate polynomial with non-zero constant term. We say
that $f$ is \emph{positive\/} if~$f(x)$ is positive over all $x\geq0$. If all
the coefficients of $f$ are non-negative, then $f$ is trivially positive. In~
1883, Poincar\'e proved that$f$ is positive if and only if there exists a monic
polynomial $g$ such that all the coefficients of $gf$ are non-negative. Such
polynomial $g$ is called a \emph{Poincar\'e multiplier\/} for the positive
polynomial $f$. Of course one hopes to find a multiplier with smallest degree.
This naturally raised a challenge: find an upper bound on the smallest degree
of multipliers. In 1918, Curtiss provided such a bound. Curtiss also showed
that the bound is optimal (smallest)\ when degree of $f$ is 1 or 2. It is easy
to show that the bound is not optimal when degree of $f$ is higher. The
Curtiss bound is a simple expression that depends only on the angle (argument)
of non-real roots of $f$. In this paper, we show that the Curtiss bound is
optimal among all the bounds that depends only on the angles.

\end{abstract}

%###################################################################################

\section{Introduction}
Let $f$ be a monic univariate polynomial with non-zero constant term. We say
that $f$ is \emph{positive\/} if~$f(x)$ is positive over 
all $x\geq0$.~\footnote{
It is equivalent to the condition that $f$ has no positive real root.
In fact, most of the discussion in the paper has a counter-part in a real root counting
problem, namely checking whether $f$ has exactly $k$ positive real roots for a given $k$. There have been many works on the topic and on its extensions, just to list a few: \cite{D37,B07,F13,HS,S09,T11,CP19,FT22}.  
}
If all
the coefficients of $f$ are non-negative, then obviously $f$ is positive, but the converse is not true. Consider the following small (toy) examples:
\begin{align*}
f_{1} &=x^{4}+x^{3}+10x^{2}+2x+10\\
f_{2} &=x^{4}-x^{3}-10x^{2}-2x+10\\
f_{3} &=x^{4}-x^{3}+10x^{2}-2x+10
\end{align*}
\noindent Note that all the$\ $coefficients of $f_{1}$ are non-negative. Thus
it is trivial to see that $f_{1}$ is positive. However $f_{2}$ and $f_{3}$
have negative coefficients, and thus it is not obvious whether they are
positive or not. It turns out that $f_{2}$ is not positive and $f_{3}$ is positive.

In 1883, Poincar\'e \cite{P83} proposed and proved a modification of the converse:  if $f$ is positive, then there exists a monic
polynomial $g$ such that all the coefficients of $gf$ are non-negative. Such
polynomial $g$ is called a \emph{Poincar\'{e} multiplier} for the positive
polynomial $f$.  For the above examples, we have

\begin{itemize}
\item Since $f_{2}$ is not positive, there is no Poincar\'{e} multiplier for
$f_{2}$.

\item Since $f_{3}$ is positive, there is a Poincar\'{e} multiplier for
$f_{3}$. For instance, let $g=x+1$. Then
\[
gf_{3}=\left(  x+1\right)  \left(  x^{4}-x^{3}+10x^{2}-2x+10\right)
=x^{5}+9x^{73}+8x^{2}+8x+10
\]
Note that all the coefficients of $gf_{3}$ are non-negative.  
\end{itemize}

\noindent In 1928, P\'olya provided a shape for the Poincar\'{e} multiplier \cite{P28}, namely $(x+1)^k$.~\footnote{
P\'olya used the multivariate version
of the shape while trying to find a certificate for a variant of Hilbert's 17th problem \cite{H02}. For similar efforts on different variants of Hilbert's 17th problem,
see~\cite{A27,K64,S91,S04,P93,NS07}.
}
In fact, in the above example, we used a multiplier of the P\'olya shape with $k=1$.  However, there are other multipliers for
$f_{3}$ that do not have the P\'olya shape. For instance, let $g=x^{2}+x+1$.  Note that it does not have the P\'olya shape, but it is a Poincar\'{e} multiplier since all the coefficients of $gf_{3}$ are again non-negative:
\[
gf_{3}=\left(  x^{2}+x+1\right)  \left(  x^{4}-x^{3}+10x^{2}-2x+10\right)
=x^{6}+10x^{4}+7x^{3}+18x^{2}+8x+10.
\]
In fact, there are infinitely many multipliers of diverse shapes.
Hence  a  challenge naturally arises: \emph{Find an upper bound on the
smallest degree of multipliers}.

In 1918, Curtiss found an upper bound  \cite{C18}. However, it seems to have been forgotten until the paper was digitized.  Separately, in 2001 Powers and Reznick gave another bound based on the P\'olya shape in the multivariate homogeneous case \cite{PR01}.  Avenda\~{n}o (page 2885 of \cite{A10}) specialized this bound to the univariate case.   

It is easy to show that the Powers-Reznick bound is not optimal.  Curtiss showed that his bound is optimal when degree of $f$ is $1$
or $2$. It is, however, easy to show that the bound is not optimal when degree of $f$ is
higher. For example, the Curtiss bound for $f_{3}$ is $2$, which is bigger
than the optimal degree which is~$1$.  Hence another  challenge naturally arises: \emph{Find the optimal (smallest) bound}.

This can be done, in principle, by the following process: for each degree $d$, starting
with $0$, we check whether there exists a Poincar\'{e} multiplier of degree~$d$. If not, we continue with next degree. If yes, we report $d$ to be the optimal bound. The process is guaranteed to terminate due to Poincar\'e's  theorem. 
Checking the existence of a Poincar\'{e} multiplier of a given degree~$d$ can be
carried out by any algorithm (such as simplex phase I) for deciding the existence of a solution for a system of linear inequalities in the coefficients of the multiplier polynomial.

However, it has  two major difficulties. (1) It
requires that we run the software on each $d$ from scratch.
(2) Each run on degree $d$ quickly becomes very time-consuming as $d$ grows, making this approach practically infeasible, especially when  the  optimal bound is very large. 

Fortunately, there is a related challenge which might be feasible to tackle.  For this, we note the Curtiss bound (Theorem \ref{thm:bound}) depends only on the angles
(arguments) of the non-real roots of $f$. Thus, we formulate the following alternative challenge: {\it Find an optimal bound among the family of bounds that depend only on the angles of the non-real roots.}  

Note that Curtiss already showed that his bound is optimal among the family of those bounds for degrees~1 and~2.  One wonders whether the Curtiss bound is optimal among this family of bounds for higher degrees.  The main contribution of this paper is
to prove  that it is so (Theorem~\ref{thm:optimal}).

\section{Main Result}

Let $f\in\mathbb{R}\left[  x\right]  $ be monic such that $\underset{x\geq
0}{\forall}f\left(  x\right)  >0$, that is, $f$ does not have any non-negative
real root.  We first define a notation for the optimal degree of the Poincar\'{e} multiplier as it will be used throughout this work.

\begin{definition}
[Optimal Bound]The optimal degree of the Poincar\'{e} multiplier for $f$, written as $\operatorname*{opt}%
\left(  f\right)  $, is defined by%
\[
\operatorname*{opt}\left(  f\right)  =\min_{\substack{g\in\mathbb{R}\left[
x\right]  \backslash0\\\operatorname*{coeff}\left(  gf\right)  \geq0}}\deg(g).
\]

\end{definition}

\begin{example}\label{ex:main}
Consider the polynomial
$$f=\left(\prod_{i=1}^4\left(x^2-2(1)\cos(\theta_i)\, x+1^2\right)\right)(x+1)$$
where $\theta=\left\{\dfrac{7\pi}{24},\dfrac{10\pi}{24},\dfrac{11\pi}{24},\dfrac{14\pi}{24}\right\}$.
First, note that $\operatorname*{opt}\left(  f\right) \neq 0$, as $f$ has mixed sign coefficients. If we let the Poincar\'{e} multiplier be $g=x+1$, we see we have $\operatorname*{opt}\left(  f\right)=1$, since $\operatorname*{coeff}\left(  gf\right)  \geq0$.
\end{example}

\begin{theorem}
[Curtiss Bound 1918 \cite{C18}]\label{thm:bound}Let $r_{1}e^{\pm i\theta_{1}%
},\ldots,r_{\sigma}e^{\pm i\theta_{\sigma}}$ be the roots of $f$ where multiple
roots are repeated. Let%
\[
b\left(  f\right)  = \underset{\theta_i\neq \pi}{\sum_{1\leq i\leq \sigma}}\left(  \left\lceil \frac{\pi}{\theta_{i}%
}\right\rceil -2\right).
\]
Then $opt\left(  f\right)  \leq b\left(  f\right)  $ and the equality
holds if $\deg f\leq2$.
\end{theorem}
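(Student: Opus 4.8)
The plan is to establish the two parts of the theorem separately: the upper bound $\operatorname{opt}(f)\le b(f)$ in general, and tightness when $\deg f\le 2$. For the upper bound the first move is to reduce to a single conjugate pair of roots. Since $f$ is real, monic and has no non-negative real root, it factors as $f=\prod_{j}(x+a_j)\cdot\prod_{i}q_i$ with $a_j>0$ and $q_i(x)=x^2-2r_i\cos\theta_i\,x+r_i^2$, $\theta_i\in(0,\pi)$. The elementary but essential observation is that polynomials with non-negative coefficients are closed under multiplication; so it suffices to produce, for each $i$, a monic $g_i$ of degree $\lceil\pi/\theta_i\rceil-2$ with $g_iq_i$ having non-negative coefficients, since then $g:=\prod_i g_i$ (the linear factors need no multiplier, which matches the omission of $\theta=\pi$ in the sum) satisfies $\operatorname{opt}(f)\le\deg g=\sum_i(\lceil\pi/\theta_i\rceil-2)=b(f)$.

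The heart of the argument is then the following lemma: for $\theta\in(0,\pi)$, $r>0$ and $m:=\lceil\pi/\theta\rceil$, there is a monic polynomial of degree $m$ with non-negative coefficients divisible by $x^2-2r\cos\theta\,x+r^2$. Rescaling $x\mapsto x/r$ reduces to $r=1$, and I would look for $p$ of the restricted shape $p(x)=x^m+\sum_{k=1}^{m-1}c_kx^k+1$ with all $c_k\ge0$, so non-negativity of the coefficients is automatic, and then impose $p(e^{i\theta})=0$, which forces $x^2-2\cos\theta\,x+1\mid p$ because $p$ is real. Splitting $p(e^{i\theta})=0$ into real and imaginary parts, the question becomes whether there exist $c_k\ge0$ with $\sum_{k=1}^{m-1}c_k(\sin k\theta,\cos k\theta)=(-\sin m\theta,\,-1-\cos m\theta)$. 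From $m=\lceil\pi/\theta\rceil$ one reads off $(m-2)\theta<(m-1)\theta<\pi\le m\theta<2\pi$: the inequality $(m-2)\theta<\pi$ says the monomial directions $(\sin k\theta,\cos k\theta)$, $k=1,\dots,m-1$, all lie in an arc of opening less than $\pi$, so their conic hull is a convex sector bounded by the $k=1$ and $k=m-1$ directions; and the identity $(-\sin m\theta,\,-1-\cos m\theta)=-2\cos\tfrac{m\theta}{2}\,(\sin\tfrac{m\theta}{2},\cos\tfrac{m\theta}{2})$ with $\tfrac{m\theta}{2}\in[\tfrac\pi2,\pi)$ exhibits the right-hand side as a non-negative multiple of the direction $\tfrac{m\theta}{2}\in[\theta,(m-1)\theta]$, which sits inside that sector. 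Hence a non-negative $\vec c$ exists (Farkas, or a direct planar argument), proving the lemma and with it the upper bound.

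For equality when $\deg f\le2$: if $\deg f=1$, or $\deg f=2$ with both roots real, or $\deg f=2$ with a conjugate pair at angle $\theta\in[\tfrac\pi2,\pi)$, then $f$ already has non-negative coefficients, so $\operatorname{opt}(f)=0=b(f)$. The only remaining case is $f=x^2-2r\cos\theta\,x+r^2$ with $\theta\in(0,\tfrac\pi2)$ (so $m=\lceil\pi/\theta\rceil\ge3$), where I must show $\operatorname{opt}(f)\ge m-2$, i.e.\ no polynomial $p$ of degree $n\le m-1$ with non-negative coefficients is divisible by $f$. Otherwise every exponent $k$ with $p_k\ne0$ satisfies $k\le m-1$, hence $k\theta<\pi$, giving $\operatorname{Im}p(re^{i\theta})=\sum_{k\ge1}p_kr^k\sin(k\theta)\ge p_nr^n\sin(n\theta)>0$ (all $\sin(k\theta)\ge0$, the leading coefficient is positive, and $0<n\theta<\pi$), contradicting $p(re^{i\theta})=0$. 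Combined with the upper bound, $\operatorname{opt}(f)=m-2=b(f)$.

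I expect the lemma to be the main obstacle: not the idea, but the angle bookkeeping certifying that the target vector lands in the conic hull of the admissible monomial directions, which rests on the easily-misstated inequalities among $m=\lceil\pi/\theta\rceil$, $(m-1)\theta$, $(m-2)\theta$, and $\tfrac{m\theta}{2}$. As a cross-check, when $m$ is even one can write $p$ explicitly as $p(x)=x^m-2\cos(\tfrac m2\theta)\,x^{m/2}+1=(x^{m/2}-e^{i(m/2)\theta})(x^{m/2}-e^{-i(m/2)\theta})$, which is divisible by $x^2-2\cos\theta\,x+1$ and has non-negative coefficients because $\tfrac m2\theta\in[\tfrac\pi2,\pi)$; the odd case, however, seems to genuinely require the feasibility argument above.
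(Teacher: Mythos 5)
The paper does not prove this theorem; it is quoted from Curtiss's 1918 paper \cite{C18} and used as a black box, so there is no in-paper proof to compare against. Judged on its own, your argument is correct and complete. The reduction to a single quadratic factor via closure of non-negative-coefficient polynomials under multiplication is sound, and the degree bookkeeping matches $b(f)$ exactly (linear factors, $\theta=\pi$, contribute nothing, as does any quadratic with $\theta\in[\tfrac\pi2,\pi)$ since then $m=2$ and $g_i=1$). The key feasibility step checks out: from $(m-1)\theta<\pi\le m\theta<2\pi$ one gets $\tfrac{m\theta}{2}\in[\tfrac\pi2,\pi)$, so the target $-2\cos\tfrac{m\theta}{2}\,(\sin\tfrac{m\theta}{2},\cos\tfrac{m\theta}{2})$ is a non-negative multiple of a direction whose angle lies in $[\theta,(m-1)\theta]$ (using $m\ge2$ for both endpoint inequalities), and since the available directions span an arc of opening $(m-2)\theta<\pi$, two-dimensional cone membership gives non-negative $c_1,c_{m-1}$; the degenerate case $m\theta=\pi$ (zero target, $p=x^m+1$) is harmlessly absorbed. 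The lower bound for $\deg f=2$, $\theta\in(0,\tfrac\pi2)$ via positivity of $\operatorname{Im}p(re^{i\theta})$ is the standard and correct argument, and you rightly observe that the leading coefficient of $gf$ must be positive because all its coefficients are non-negative. This is essentially a clean reconstruction of Curtiss's original proof.
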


\begin{example}\label{ex:notopt}
Consider the same polynomial from Example \ref{ex:main},
$$f=\left(\prod_{i=1}^4\left(x^2-2(1)\cos(\theta_i)\, x+1^2\right)\right)(x+1)$$
where $\theta=\left\{\dfrac{7\pi}{24},\dfrac{10\pi}{24},\dfrac{11\pi}{24},\dfrac{14\pi}{24}\right\}$.The Curtiss bound for this polynomial is 
\begin{align*} b(f) & =\left(  \left\lceil \frac{\pi}{\frac{7\pi}{24}
}\right\rceil -2\right)+\left(  \left\lceil \frac{\pi}{\frac{10\pi}{24}
}\right\rceil -2\right)+\left(  \left\lceil \frac{\pi}{\frac{11\pi}{24}
}\right\rceil -2\right)+\left(  \left\lceil \frac{\pi}{\frac{14\pi}{24}
}\right\rceil -2\right) \\
& =2+1+1+0 \\
& =4. \end{align*}
Here we can see by comparing to Example \ref{ex:main} that the Curtiss bound is not always optimal.
\end{example}

\begin{theorem}
[Main Result: Angle-Based Optimality of Curtiss's Bound]\label{thm:optimal}We
have%
\[
\underset{\theta_{1},\ldots,\theta_{\sigma}\in(0,\pi]}{\forall}%
\ \ \ \underset{r_{1}%
,\ldots,r_{\sigma}>0}{\exists}\ \operatorname*{opt}\left(  f\right)  =b\left(
f\right).
\]

\end{theorem}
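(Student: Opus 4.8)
The plan is to fix the angles $\theta_1,\ldots,\theta_\sigma\in(0,\pi]$ and to construct, by a careful choice of the moduli $r_1,\ldots,r_\sigma>0$, a polynomial $f$ attaining $\operatorname{opt}(f)=b(f)$. Since the inequality $\operatorname{opt}(f)\le b(f)$ is already Theorem~\ref{thm:bound}, the entire content is the matching lower bound $\operatorname{opt}(f)\ge b(f)$ for a suitable choice of moduli. The natural first step is a reduction: because $b(f)=\sum_i b_i$ with $b_i=\lceil\pi/\theta_i\rceil-2$ a per-root quantity, and because the multiplier for a product should morally be the ``product'' of multipliers for the factors, I expect the theorem to follow once one proves the single-pair case $\sigma=1$, i.e.\ for each $\theta\in(0,\pi)$ there is $r>0$ with $\operatorname{opt}\bigl((x^2-2r\cos\theta\,x+r^2)\bigr)=\lceil\pi/\theta\rceil-2$, together with a lemma showing that $\operatorname{opt}$ is superadditive (or at least that the bound is additive on a well-chosen family) so the separate lower bounds add up. The factor $(x+1)$ appearing when some $\theta_i=\pi$ contributes $0$ to $b$ and should be harmless.

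For the single-pair case, the key idea is a duality/LP argument. By the definition of $\operatorname{opt}$, showing $\operatorname{opt}(f)\ge d$ means showing that \emph{no} nonzero $g$ of degree $<d$ has $\operatorname{coeff}(gf)\ge 0$; equivalently the polyhedral cone $\{g:\deg g\le d-1,\ \operatorname{coeff}(gf)\ge0\}$ is $\{0\}$. I would dualize: a nonzero such $g$ exists iff a certain system of linear inequalities (indexed by the coefficient positions of $gf$) is feasible, and infeasibility can be certified by exhibiting a nonnegative combination of the constraints that is identically zero on the $g$-variables but has positive constant term — concretely, evaluating $gf$ against a cleverly chosen nonnegative linear functional. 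The classical device here is to test against powers: for the pair $x^2-2r\cos\theta\,x+r^2$ with root $re^{i\theta}$, one evaluates at the complex root itself, using that $\operatorname{coeff}(gf)\ge0$ forces $|gf|$ on the positive real axis to dominate its values elsewhere in a way that is incompatible with $gf$ vanishing at $re^{i\theta}$ when $\theta$ is so small that $d-1$ coefficients of $g$ cannot ``spread'' the mass far enough around the origin. Making the angle condition $d=\lceil\pi/\theta\rceil-2$ precise is exactly the point where the ceiling enters: $d+1$ consecutive monomials $x^k,\ldots,x^{k+d+1}$ span an arc of argument $(d+1)\theta<\pi$ at the root $re^{i\theta}$, so their real parts cannot all be made to cooperate — this is the classical ``a positive-coefficient polynomial cannot have a root with small positive argument relative to its number of terms'' phenomenon, and I would isolate it as a self-contained lemma about $\operatorname{coeff}(h)\ge0\Rightarrow h$ has no root in the sector $|\arg z|<\pi/(\#\text{terms of }h)$, or a refined modulus-sensitive version thereof.

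The choice of moduli $r_i$ is what buys the additivity. I would take the $r_i$ to be separated by wildly different scales (e.g.\ $r_i=T^i$ for a large parameter $T$), so that when one forms $gf$ and inspects a coefficient, the contributions localized near each root-scale $r_i$ decouple: the lower-bound obstruction for the $i$-th factor operates ``at scale $r_i$'' essentially independently of the others, forcing $g$ to spend $\ge b_i$ of its degree near each scale, hence $\deg g\ge\sum_i b_i=b(f)$. Formally this is a perturbation/limit argument: establish the strict inequalities in the single-scale case, then argue they persist under the small cross-terms introduced by the other factors when $T\to\infty$.

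The step I expect to be the main obstacle is precisely this decoupling-across-scales argument: proving that a single multiplier $g$ cannot be ``efficient'' simultaneously at all the scales $r_1,\ldots,r_\sigma$, i.e.\ that the per-root lower bounds genuinely add rather than overlap. The single-pair lower bound is essentially Curtiss's own optimality proof for degree~$2$ (which the excerpt credits to him), so it is available in spirit; but combining several pairs requires controlling how $g$'s coefficient vector must have $\ge b_i$ ``active'' entries in $b_i+1$ disjoint windows of exponents, and ruling out that one window's coefficients can be reused to satisfy another window's constraints. I would handle this by a valuation/Newton-polygon bookkeeping on $T$: track the $T$-adic order of each coefficient of $gf$ and show the nonnegativity constraints, read off order by order, reproduce $\sigma$ independent copies of the single-pair infeasibility, each consuming its own block of $\deg g$. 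If a clean product construction does not quite work, the fallback is to prove the theorem directly for the combined polynomial by the dual-functional method, choosing the testing functional to be supported near the appropriate root-scales — more laborious but structurally the same.
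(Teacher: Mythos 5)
Your high-level plan --- fix the angles, prove the matching lower bound for a single conjugate pair, then choose moduli at widely separated scales so the per-pair lower bounds add --- correctly locates the difficulty, but the two steps carrying all the weight are left unproved. First, the additivity step. Superadditivity of $\operatorname{opt}$ is false in general: the paper's opening example exhibits four quadrant-one pairs with $b(f)=4$ for which $\operatorname{opt}(f)=1$ when all radii equal $1$, so the per-root obstructions genuinely can overlap, and everything hinges on the choice of moduli. Your proposed mechanism --- radii $r_i=T^i$ plus a $T$-adic/Newton-polygon bookkeeping showing the nonnegativity constraints reproduce $\sigma$ independent copies of the single-pair infeasibility --- is exactly the missing theorem; you flag it yourself as the main obstacle and do not carry it out. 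Note also that your sector lemma (a nonnegative-coefficient polynomial of degree $m$ has no root with argument in $(0,\pi/m)$), applied to $gf$, only yields $\deg g\ge\max_i\lceil\pi/\theta_i\rceil-\deg f$, which is far weaker than $\sum_i\bigl(\lceil\pi/\theta_i\rceil-2\bigr)$ once $\sigma\ge2$, so the single-pair mechanism does not iterate by itself. The paper avoids scale-decoupling altogether: for all quadrant-one angles simultaneously it shows (Lemmas \ref{lem:deg}--\ref{lem:q1m2}) that for suitable radii every monic multiplier of degree $s-1$ forces one of just three explicit coefficients $c_{s-2}$, $c_{2\ell+s-5}$, $c_{2\ell+s-2}$ of $gf$ to be negative, via a planar analysis of the lines $c_k=0$ in the $(b_{s-2},b_{s-3})$ coordinates as the last radius grows.

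Second, the factors with angle in $[\pi/2,\pi]$ are not automatically ``harmless.'' They contribute $0$ to $b$, but multiplying by them can strictly \emph{decrease} $\operatorname{opt}$ for a bad radius (the paper's final examples show $\operatorname{opt}(f\,h_{\phi,r})$ dropping from $4$ to $3$), so one must still prove that some radius preserves $\operatorname{opt}$. The paper devotes a separate convex-geometry argument to this (Lemmas \ref{lem:ch} and \ref{lem:extension}): non-existence of a low-degree multiplier is rephrased as disjointness of a convex hull from the nonnegative orthant, and the positive separation at $r=0$ is propagated to small $r>0$ by continuity of the distance function. Without an argument of this kind, your reduction of the general case to the quadrant-one case does not close.
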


\begin{example}
Consider the following polynomial with the same angles as that from Example \ref{ex:notopt} but different radii,
$$f=\left(\prod_{i=1}^4\left(x^2-2(10^{i-1})\cos(\theta_i)\, x+(10^{i-1})^2\right)\right)(x+10^4)$$
where $\theta=\left\{\dfrac{7\pi}{24},\dfrac{10\pi}{24},\dfrac{11\pi}{24},\dfrac{14\pi}{24}\right\}$.
The Curtiss bound for this polynomial is still $b(f)=4$.  However, in this case we have $\operatorname*{opt}(f)=b(f)=4$.

\end{example}

\section{Proof of Angle-Based Optimality (Theorem \ref{thm:optimal})}
\subsection{Overview}
As the proof is rather lengthy, we will include an overview of the structure of the proof. First, we provide a diagram of the various components of the proof.  As shown in the diagram, the main claim is split into two sub-claims, which are then proven via several smaller claims.  Below, we explain the diagram from a big picture standpoint.  In the sections referenced in the diagram, we provide the proofs for each claim. 
\begin{center}
\tikzstyle{bag1} = [text width=16em, text centered]
\tikzstyle{bag2} = [text width=11em, text centered]
\begin{tikzpicture}
[edge from parent fork down]
\node[bag1]  {Optimality of Curtiss's Bound \\ Section \ref{sec:main}, Theorem \ref{thm:optimal}}[sibling distance = 8.5cm, level distance = 1.35cm]
    child {node[bag1] {All Angles in Quadrant 1 \\ Section \ref{sec:Q1}, Lemma \ref{lem:core}}[sibling distance = 3cm, level distance = 1.35cm]
    child {node[bag2] {One Less Degree \\ Lemma \ref{lem:deg}}}
    child {node[bag2] {3 Coefficients \\ Lemma \ref{lem:subset}}}
    child {node[bag2] {Existence of Radii \\ Lemma \ref{lem:q1m2}}} [sibling distance = 3.8cm, level distance = 1.35cm]
    child {node[bag2] {Partial Witness \\ Lemma \ref{lem:witness}}}}
    child {node[bag1] {Some Angles in Quadrant 2 \\ Section \ref{sec:q2}, Lemma \ref{lem:reduce}}[sibling distance = 2.5cm, level distance = 1.35cm]
    child {node[bag1] {One New Quadrant 2 Factor \\ Lemma \ref{lem:extension}}
    child {node[bag1] {Separation of Sets \\ Lemma \ref{lem:ch}}}}};
\end{tikzpicture}
\end{center}
Our goal is to prove the angle-based optimality of Curtiss's bound on the degree of the Poincar\'{e} multiplier for a positive polynomial.    \bigskip

\noindent Symbolically, we have
\[
\underset{\theta\in\Theta}{\forall}%
\ \ \underset{r\in R}{\exists}\ \ \operatorname*{opt}\left(  f\right)  =b\left(
f\right).
\]
After extensive exploration, we identified a natural split in strategies between collections of angles $0<\theta<\dfrac{\pi}{2}$ and those with additional angles $\dfrac{\pi}{2}\leq \theta \leq \pi$.  We now have
\[
\underset{\theta\in\Theta_1 \cup \Theta_2}{\forall
}\ \ \underset{r\in R}{\exists}\ \ \operatorname*{opt}\left(  f\right)
=b\left(  f\right).
\]
We first focus on the claim for $\theta\in\Theta_1$, polynomials with complex root arguments $0<\theta<\dfrac{\pi}{2}$.  That is, for any collection of angles in this subset, there exists a corresponding collection of radii such that Curtiss's bound is the minimum possible degree for the Poincar\'{e} multiplier to achieve a product with all non-negative coefficients. One way to show this is to prove that there exist radii such that any multiplier of smaller degree will produce a product with a negative coefficient.  We can then rewrite the claim as
\[\underset{\theta\in\Theta_1}{\forall}\ \ \underset{r\in R}{\exists}\ \ \underset{g \in G}{\forall}\ \ \underset{k \in K}{\exists}%
\ \  \operatorname*{coeff}(gf,k)<0.\]
Note, with this rewrite we consider a problem with three quantifier alternations.  As this is a very complex problem, we seek to reduce the search space for the negative coefficient.  It is straightforward to prove that, rather than consider the entire set of multipliers of smaller degree, it is sufficient to focus on the set of multipliers with degree one less than Curtiss's bound to show that any such multiplier will produce a product with a negative coefficient.  We now have
\[\underset{\theta\in\Theta_1}{\forall}\ \ \underset{r\in R}{\exists}\ \ \underset{g \in G'}{\forall}\ \ \underset{k \in K}{\exists}%
\ \ \operatorname*{coeff}(gf,k)<0\]
for a subset $G'$ of the original multiplier search space $G$ that is determined by the Curtiss bound (hence, by $\theta$).  After detailed analysis of examples, we surprisingly discovered that we can also narrow down the potential locations of the negative coefficient.  In fact, although the collection of product coefficients grows in size with Curtiss's bound, we can identify a negative coefficient from a subset of just 3 possibilities.  Hence
 \[\underset{\theta\in\Theta_1}{\forall}\ \ \underset{r\in R}{\exists}\ \ \underset{g \in G'}{\forall}\ \ \underset{k \in K'}{\exists}%
\ \ \operatorname*{coeff}(gf,k)<0\]
for a subset $K'$ of the original coefficient search space $K$ that is determined by the number of angles in $\theta$ and the Curtiss bound (hence, by $\theta$).  Finally, with more difficulty, we determined how to reduce the search space for the desired radii.  By identifying a witness, in terms of $\cos\left(\theta\right)$, for all but one of the radii, we reduced the radii search space to just a one-dimensional space for the final radius.  We have
\[\underset{\theta\in\Theta_1}{\forall}\ \ \underset{r\in R'}{\exists}\ \ \underset{g \in G'}{\forall}\ \ \underset{k \in K'}{\exists}%
\ \ \operatorname*{coeff}(gf,k)<0\]
for a subset $R'$ of the original radii search space $R$.  Proving this final version of the claim was still a significant challenge.  We examined the geometry of the product coefficients as a function of this final unknown radius in order to verify the claim.  Expressing the coefficients of the original polynomial $f$ in terms of the elementary symmetric polynomials in the roots of $f$ proved essential to our progress, enabling us to make claims about the sign of the coefficients that were otherwise quite challenging. \bigskip

\noindent Having proved the claim in the case where all angles fall in $\Theta_1$, we move on to the case where we have mixed angles from $\Theta_1$ or $\Theta_2$.  The crux of this claim is that any polynomial in which Curtiss's bound is optimal can be multiplied by a quadratic with angle $\dfrac{\pi}{2}\leq \theta < \pi$ and a radius that preserves the optimality of Curtiss's bound or a linear factor with an appropriate radius that preserves the optimality of Curtiss's bound.  Thus, for any collection of additional angles in quadrant 2, we can inductively prove the existence of the necessary radii to ensure the optimality of Curtiss's bound.
\[
\underset{\theta\in\Theta_2}{\forall
}\ \ \underset{r\in R}{\exists}\ \ \operatorname*{opt}\left( f_{\theta,r}\,f\right)  =\operatorname*{opt}\left(
f\right)=b(f)
\]
Note that, in this case, the additional factors in the polynomial do not increase the Curtiss bound. For this sub-claim we utilize a different strategy, relying on linear algebra to rewrite the optimality claim as a separation between two sets.  

\subsection{Proof of Angle-Based Optimality (Theorem \ref{thm:optimal})}\label{sec:main}
\noindent Since we must treat angles separately according to their measure, we split the collection of angles $\theta_1,\ldots,\theta_{\sigma}$ into three sets as detailed below.  Let%
\begin{align*}
f  &  =f_{\theta,r_{\theta}}\,f_{\phi,r_{\phi}}\,f_{\pi,p}\\
f_{\theta,r_{\theta}}  &  =\prod\limits_{\substack{1\leq i\leq\ell
\\0<\theta_{i}<\frac{\pi}{2}}}\left(  x^{2}-2r_{\theta_{i}}\cos\theta
_{i}\,x+r_{\theta_{i}}^{2}\right)  \ \ \ \text{where }r_{\theta_{i}%
}>0\ \text{and }0<\theta_{1}\leq\cdots\leq\theta_{\ell}<\dfrac{\pi}{2} \\
f_{\phi,r_{\phi}}  &  =\prod\limits_{\substack{1\leq i\leq k\\\frac{\pi}%
{2}\leq\phi_{i}<\pi}}\left(  x^{2}-2r_{\phi_{i}}\cos\phi_{i}\,x+r_{\phi_{i}%
}^{2}\right)  \ \text{where }r_{\phi_{i}}>0~\text{and }\ \dfrac{\pi}{2}\leq\phi_{1}%
\leq\cdots\leq\phi_{k} < \pi\\
f_{\pi,p}  &  =\prod\limits_{1\leq i\leq t}\left(  x+p_{i}\right)
\ \,\text{where }p_{i}>0
\end{align*}
where $k+\ell+t=\sigma$  and $2(k+\ell)+t=n=\deg
(f)$.  \bigskip

\begin{proof}
[Proof of Theorem \ref{thm:optimal}] 
We need to show
\[
\underset{\frac{\pi}{2} \leq\phi_{1}\leq\cdots\leq\phi_{k}<\pi}{\forall
}\ \ \underset{0<\theta_{1}\leq\cdots\leq\theta_{\ell}<\frac{\pi}{2}}{\forall
}\ \ \underset{p_{1},\ldots,p_{t}>0}{\exists}\ \ \underset{r_{\phi_{1}}%
,\ldots,r_{\phi_{k}}>0}{\exists}\ \ \underset{r_{\theta_{1}},\ldots
,r_{\theta_{\ell}}>0}{\exists}\ \ \operatorname*{opt}\left(  f\right)
=b\left(  f\right).
\]
Let $\dfrac{\pi}{2}\leq\phi_{1}%
\leq\cdots\leq\phi_{k} < \pi\ \,$and
$0<\theta_{1}\leq\cdots\leq\theta_{\ell}<\dfrac{\pi}{2}$ be arbitrary but
fixed. We need to show
\[
\underset{p_{1},\ldots,p_{t}>0}{\exists}\ \ \underset{r_{\phi_{1}}%
,\ldots,r_{\phi_{k}}>0}{\exists}\ \ \underset{r_{\theta_{1}},\ldots
,r_{\theta_{\ell}}>0}{\exists}\ \ \operatorname*{opt}\left(  f\right)
=b\left(  f\right)  .
\]
We need to find a witness for $p,r_{\phi},r_{\theta}$ such that
$\operatorname*{opt}\left(  f\right)  =b\left(  f\right)  $.
We propose a witness candidate as follows.
From the All Angles in Quadrant 1 Lemma (Lemma \ref{lem:core}), for the fixed $\theta$, we have%
\begin{equation}
\underset{r_{\theta_{1}},\ldots,r_{\theta_{\ell}}>0}{\exists}%
\ \ \ \operatorname*{opt}\left(  f_{\theta,r_{\theta}}\right)  =b\left(
f_{\theta,r_{\theta}}\right). \label{l:core}%
\end{equation}
From the Some Angles in Quadrant 2 Lemma (Lemma \ref{lem:reduce}), for the fixed $\phi$ and $\theta$, we have
\begin{equation}
\underset{p_{1},\ldots,p_{t}>0}{\exists}\ \ \ \underset{r_{\phi_{1}}%
,\ldots,r_{\phi_{k}}>0}{\exists}\ \ \ \underset{r_{\theta_{1}},\ldots
,r_{\theta_{\ell}}>0}{\forall}\ \ \ \operatorname*{opt}\left( f_{\theta,r_{\theta}}\,f_{\phi,r_{\phi}}\,f_{\pi,p}\right)  =\operatorname*{opt}%
\left(  f_{\theta,r_{\theta}}\right).  \label{l:reduce}%
\end{equation}
We propose $p,r_{\phi},r_{\theta}$ appearing in the above two facts as a
witness candidate.

\bigskip

We verify that the proposed candidate is indeed a witness, that is,
$\operatorname*{opt}\left(  f\right)  =b\left(  f\right)  $.
Note%
\begin{align*}
\operatorname*{opt}\left(  f\right)   &  =\operatorname*{opt}\left( f_{\theta,r_{\theta}}\,f_{\phi,r_{\phi}}\,f_{\pi,p}\right) \\
&  =\operatorname*{opt}\left(  f_{\theta,r_{\theta}}\right)  \ \ \text{by
(\ref{l:reduce})}\\
&  =b\left(  f_{\theta,r_{\theta}}\right)  \ \text{by (\ref{l:core})}\\
&  =b\left(
f_{\theta,r_{\theta}}\right)  +b\left(  f_{\phi,r_{\phi}}\right)  +b\left(  f_{\pi,p}\right)  \ \text{since }
b\left(  f_{\phi,r_{\phi}}\right)=b\left(  f_{\pi,p}\right)  =0\ \\
&  =b\left( f_{\theta,r_{\theta}}\,f_{\phi,r_{\phi}}\,f_{\pi,p}\right) \\
&  =b\left(  f\right).
\end{align*}
\noindent Hence the theorem is proved.
\end{proof}

\subsection{Common Notations}

The following notation and lemma will be used in both the subsequent subsections.  Let%
\begin{align*}
f  &  =a_{n}x^{n}+\cdots+a_{0}x^{0}\\
g  &  =b_{s}x^{s}+\cdots+b_{0}x^{0}%
\end{align*}
where $a_{n}=1$ and $b_{s}=1$. We first rewrite them using vectors. Let%
\[
a=\left[
\begin{array}
[c]{ccc}%
a_{0} & \cdots & a_{n}%
\end{array}
\right]  \ \ \ \ \ \ \ b=\left[
\begin{array}
[c]{ccc}%
b_{0} & \cdots & b_{s}%
\end{array}
\right]
\]
and let%
\[
x_{k}=\left[
\begin{array}
[c]{c}%
x^{0}\\
\vdots\\
x^{k}%
\end{array}
\right].
\]
Then we can write $f$ and $g$ compactly as%

\[
f=ax_{n}\ \ \ \ \text{and} \ \ \ \ g=bx_{s}.%
\]
Let%
\[
A_{s}=\left[
\begin{array}
[c]{cccccc}%
a_{0} & \cdots & \cdots & a_{n} &  & \\
& \ddots &  &  & \ddots & \\
&  & a_{0} & \cdots & \cdots & a_{n}%
\end{array}
\right]  \in\mathbb{R}^{\left(  s+1\right)  \times(s+n+1)}.%
\]

\begin{lemma}[Coefficients]
\label{lem:bA}$\operatorname*{coeffs}\left(  gf\right)  =bA_{s}$
\end{lemma}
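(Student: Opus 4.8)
The plan is to expand the product $gf$ directly and match the resulting convolution formula against the matrix--vector product $bA_{s}$ entry by entry. First I would write
\[
gf=\left(\sum_{j=0}^{s}b_{j}x^{j}\right)\left(\sum_{i=0}^{n}a_{i}x^{i}\right)=\sum_{j=0}^{s}\sum_{i=0}^{n}b_{j}a_{i}\,x^{i+j},
\]
and then collect terms by the power $x^{k}$ for $0\leq k\leq s+n$, so that the coefficient of $x^{k}$ in $gf$ equals $\sum_{i+j=k}b_{j}a_{i}=\sum_{j=0}^{s}b_{j}a_{k-j}$, where I adopt the convention $a_{m}=0$ whenever $m<0$ or $m>n$.

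Next I would read off the $(j,k)$ entry of $A_{s}$ from its definition: indexing the $s+1$ rows by $j=0,\ldots,s$ and the $s+n+1$ columns by $k=0,\ldots,s+n$, the band structure (the content of the ``$\ddots$'' in the displayed matrix) shows that $(A_{s})_{j,k}=a_{k-j}$ under the same convention, i.e.\ row $j$ is the vector $a$ shifted right by $j$. Therefore the $k$-th component of the row vector $bA_{s}$ is $\sum_{j=0}^{s}b_{j}(A_{s})_{j,k}=\sum_{j=0}^{s}b_{j}a_{k-j}$, which is exactly the coefficient of $x^{k}$ in $gf$ computed above. Since this holds for every $k$, we conclude $\operatorname{coeffs}(gf)=bA_{s}$.

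The only point requiring any care --- and the closest thing to an obstacle --- is index bookkeeping: fixing once and for all a $0$-based indexing of the $s+1$ rows and $s+n+1$ columns of $A_{s}$, and checking that the out-of-range symbols $a_{m}$ (for $m<0$ or $m>n$) correspond precisely to the zero entries of the banded matrix. A clean way to make the identification transparent is to observe that the $j$-th row of $A_{s}$ is the coefficient vector of $x^{j}f=a_{0}x^{j}+\cdots+a_{n}x^{j+n}$; then $bA_{s}=\sum_{j=0}^{s}b_{j}\operatorname{coeffs}(x^{j}f)=\operatorname{coeffs}\!\left(\sum_{j=0}^{s}b_{j}x^{j}f\right)=\operatorname{coeffs}(gf)$, the middle equality being just linearity of the coefficient map. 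Either way the verification is entirely routine.
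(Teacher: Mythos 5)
Your proof is correct and follows essentially the same route as the paper: both amount to the observation that row $j$ of $A_{s}$ is the coefficient vector of $x^{j}f$, so that $bA_{s}$ encodes the convolution of coefficients (the paper packages this as $gf=b\left(x_{s}\,a\,x_{n}\right)=bA_{s}x_{s+n}$, while you write out the double sum explicitly). The index bookkeeping you flag is handled correctly, so nothing further is needed.
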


\begin{proof}
Note%
\begin{align*}
gf  &  =\left(  bx_{s}\right)  \left(  ax_{n}\right) \\
&  =b\left(  x_{s}ax_{n}\right) \\
&  =b\left[
\begin{array}
[c]{c}%
x^{0}\\
\vdots\\
x^{s}%
\end{array}
\right]  \left[
\begin{array}
[c]{ccc}%
a_{0} & \cdots & a_{n}%
\end{array}
\right]  \left[
\begin{array}
[c]{c}%
x^{0}\\
\vdots\\
x^{n}%
\end{array}
\right] \\
&  =b\left[
\begin{array}
[c]{cccccc}%
a_{0} & \cdots & \cdots & a_{n} &  & \\
& \ddots &  &  & \ddots & \\
&  & a_{0} & \cdots & \cdots & a_{n}%
\end{array}
\right]  \left[
\begin{array}
[c]{c}%
x^{0}\\
\vdots\\
x^{s+n}%
\end{array}
\right] \\
&  =bA_{s}x_{s+n}%
\end{align*}
Hence $\operatorname*{coeffs}\left(  gf\right)  =bA_{s}$.
\end{proof}

\subsection{Concerning Angles in Quadrant 1, $0<\theta<\dfrac
{\pi}{2}$}\label{sec:Q1}

First, we present the overarching argument for the claim that, given a collection of angles in quadrant 1 (excluding the axes), there exist radii such that the Curtiss bound is optimal.  There are a number of claims required for this proof.
\begin{center}
\tikzstyle{bag1} = [text width=16em, text centered]
\tikzstyle{bag2} = [text width=11em, text centered]
\begin{tikzpicture}
[edge from parent fork down]
\node[bag1] {All Angles in Quadrant 1 \\ Lemma \ref{lem:core}}[sibling distance = 3cm, level distance = 1.35cm]
    child {node[bag2] {One Less Degree \\ Lemma \ref{lem:deg}}}
    child {node[bag2] {3 Coefficients \\ Lemma \ref{lem:subset}}}
    child {node[bag2] {Existence of Radii \\ Lemma \ref{lem:q1m2}}} [sibling distance = 3.8cm, level distance = 1.35cm]
    child {node[bag2] {Partial Witness \\ Lemma \ref{lem:witness}}};
\end{tikzpicture}
\end{center} 
We will utilize the following notations throughout this subsection.  For $\ell \geq 2$, let
\begin{align*}
\alpha_{i}  &  =r_{i}e^{i\theta_{i}}\ \ \ \text{for }1\leq i\leq\ell\\
\alpha_{\ell+i}  &  =r_{i}e^{-i\theta_{i}}\ \ \ \text{for }1\leq i\leq\ell\\
t_{i}  &  =\cos\left(\theta_{i}\right)\\
 f_{\theta
,r_{\theta}}  &  =\prod_{i=1}^{\ell}\left(  x^{2}-2r_{i}t_{i}x+r_{i}^{2}\right)
=\prod_{i=1}^{\ell}(x-\alpha_{i})(x-\alpha_{\ell+i})=\sum_{i=0}^{2\ell}%
a_{i}x^{i}\\
s  &  =b( f_{\theta
,r_{\theta}})\\
g  &  =x^{s-1}+b_{s-2}x^{s-2}+\cdots+b_{1}x+b_{0}\\
c_{k}  &  =\operatorname*{coeff}(g\, f_{\theta
,r_{\theta}},x^{k}).
\end{align*}
Note that $a_{i}=(-1)^{2\ell-i}e_{2\ell-i}\left(  \alpha_{1},\hdots,\alpha_{2\ell
}\right)  $ where $e_{k}\left(  \alpha_{1},\hdots,\alpha_{2\ell}\right)  $ is
the elementary symmetric polynomial of degree $k$ in the roots $\alpha
_{1},\hdots,\alpha_{2\ell}$. When $\ell=0$, we define $e_{0}=1$.  Second, $t_{i}> 0$ since $0<\theta_{i} <\dfrac{\pi}{2}$.

\begin{lemma}[All Angles in Quadrant 1]
\label{lem:core}%
\[
\underset{\ell\geq0}{\forall}\ \ \underset{0<\theta_{1}\leq
\ldots\leq\theta_{\ell}<\frac{\pi}{2}}{\forall}\ \ \underset{r_{\theta_{1}},\ldots
,r_{\theta_{\ell}}>0}{\exists}\ \operatorname*{opt}\left(  f_{\theta
,r_{\theta}}\right)  =b\left(  f_{\theta,r_{\theta}}\right)
\]

\end{lemma}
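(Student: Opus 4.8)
Since Theorem~\ref{thm:bound} already gives $\operatorname*{opt}(f_{\theta,r_{\theta}})\le b(f_{\theta,r_{\theta}})$ for every choice of radii, the plan is to exhibit radii for which the reverse inequality also holds, i.e. radii for which no multiplier of degree below $s:=b(f_{\theta,r_{\theta}})$ exists. The cases $\ell=0$ (here $f_{\theta,r_{\theta}}=1$ and $s=0$) and $\ell=1$ (Curtiss's degree-two equality) are immediate; for $\ell\ge 2$ each summand $\lceil\pi/\theta_i\rceil-2$ is at least $1$, so $s\ge\ell\ge 2$ and the multipliers of interest are genuinely non-constant.

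First I would invoke Lemma~\ref{lem:deg} (One Less Degree): if a nonzero $g$ of degree at most $s-1$ satisfied $\operatorname*{coeff}(g f_{\theta,r_{\theta}})\ge 0$, then its leading coefficient would be positive (it equals the leading coefficient of the product, $f_{\theta,r_{\theta}}$ being monic), so after rescaling $g$ to monic and multiplying by a suitable power of $x$ — operations that only shift and rescale the coefficients of the product — one obtains a monic multiplier of degree exactly $s-1$. Hence it suffices to find radii for which \emph{every} $g=x^{s-1}+b_{s-2}x^{s-2}+\cdots+b_0$ produces some negative product coefficient $c_k=\operatorname*{coeff}(g f_{\theta,r_{\theta}},x^k)=\sum_j b_j a_{k-j}$. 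Each $c_k$ is affine in $(b_0,\dots,b_{s-2})$, so the statement ``$\exists g\ \forall k\ c_k\ge 0$'' is the feasibility of a system of linear inequalities in the $b_j$, and the goal becomes choosing radii that make this system infeasible.

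Next I would shrink the problem in two ways. By Lemma~\ref{lem:subset} (3 Coefficients) it is enough to defeat the system on a three-element index set $K'$ (depending only on $\ell$ and $s$): infeasibility need only be checked for the three constraints $c_k\ge 0$, $k\in K'$. By Lemma~\ref{lem:witness} (Partial Witness) I would then fix all radii but one to explicit values expressed through the $t_i=\cos\theta_i$, leaving a single free radius $r$. Throughout, I would keep the coefficients of $f_{\theta,r_{\theta}}$ in the elementary-symmetric form $a_i=(-1)^{2\ell-i}e_{2\ell-i}(\alpha_1,\dots,\alpha_{2\ell})$, since this exposes how the three relevant $c_k$ depend on $r$ and keeps their signs tractable.

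The crux is then Lemma~\ref{lem:q1m2} (Existence of Radii): show that the free radius $r$ can be chosen so that the system of three linear inequalities $c_k\ge 0$, $k\in K'$, in the unknowns $b_0,\dots,b_{s-2}$ is infeasible. I expect this to be the main obstacle. The approach is to analyze the geometry of $(c_k)_{k\in K'}$ as a function of $r$: using the partial witness to pin the remaining radii and the symmetric-function expansion to read off leading behavior, one argues that for $r$ taken sufficiently extreme relative to the fixed radii, the linear dependencies among the $a_i$ leave no slack — whenever any two of the three constraints hold, the third is violated — so no $g$ can satisfy all three (concretely, via a Farkas-type nonnegative combination of the $c_k$ that is manifestly negative for the chosen $r$). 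Making these sign comparisons precise — separating the numerous symmetric-function contributions cleanly enough that the argument closes — is the principal technical difficulty. Once Lemma~\ref{lem:q1m2} is in hand, the reductions above yield $\operatorname*{opt}(f_{\theta,r_{\theta}})\ge s$, hence the equality $\operatorname*{opt}(f_{\theta,r_{\theta}})=b(f_{\theta,r_{\theta}})$.
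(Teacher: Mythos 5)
Your proposal is correct and follows essentially the same route as the paper: the same case split on $\ell$ ($\ell=0$ trivial, $\ell=1$ from Theorem~\ref{thm:bound}, $\ell\ge 2$ the substantive case) and the same chain of reductions, namely Lemma~\ref{lem:deg} to restrict to monic multipliers of degree exactly $s-1$, Lemma~\ref{lem:subset} to restrict to the three coefficients $k\in\{s-2,\,2\ell+s-5,\,2\ell+s-2\}$, and Lemma~\ref{lem:q1m2} (using Lemma~\ref{lem:witness} to pin all radii but one) to finish. You also correctly locate the real work in Lemma~\ref{lem:q1m2}, which is exactly where the paper places it.
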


\begin{proof}
We need to prove the following claim for every $\ell\geq0$.
\[
\underset{0<\theta_{1}\leq
\ldots\leq\theta_{\ell}<\frac{\pi}{2}}{\forall
}\ \ \underset{r_{\theta_{1}},\ldots,r_{\theta_{\ell}}>0}{\exists
}\ \operatorname*{opt}\left(  f_{\theta,r_{\theta}}\right)  =s
\]
where $s=b(  f_{\theta,r_{\theta}})$.
By the One Less Degree Lemma (Lemma \ref{lem:deg}), it suffices to show
\[%
\begin{array}
[c]{cl}
& \underset{0<\theta_{1}\leq
\ldots\leq\theta_{\ell}<\frac{\pi}{2}}{\forall
}\ \ \underset{r_{\theta_{1}},\ldots,r_{\theta_{\ell}}>0}{\exists
}\ \ \underset{\deg(g)=s-1}{\underset{g\in\mathbb{R}[x]}{\forall}%
}\ \ \underset{0 \leq k \leq2\ell+s-1}{\exists}\ \ c_{k} <0.
\end{array}
\]

\noindent We will show it in three cases depending on the values of $\ell$.
\begin{description}[align=left,labelwidth=1em]
\item[\sf Case:] $\ell=0$. 

Here, $f=1$. The claim is trivially true since $\operatorname*{opt}\left(
f_{\theta,r_{\theta}}\right)  =b\left(  f_{\theta,r_{\theta}}\right)  =0$.

\item[\sf Case:] $\ell=1$.

Immediate from the Curtiss Bound (Theorem \ref{thm:bound}).

\item[\sf Case:] $\ell\geq 2$.

Note
\begin{align*} &\ \underset{0<\theta_{1}\leq
\ldots\leq\theta_{\ell}<\frac{\pi}{2}}{\forall}\ \ \underset{r_{\theta_{1}},\ldots
,r_{\theta_{\ell}}>0}{\exists}\ \operatorname*{opt}\left(  f_{\theta
,r_{\theta}}\right)  =b\left(  f_{\theta,r_{\theta}}\right) \\
\iff &\ \underset{0<\theta_{1}\leq
\ldots\leq\theta_{\ell}<\frac{\pi}{2}}{\forall}\ \ \underset{r_{\theta_{1}},\ldots
,r_{\theta_{\ell}}>0}{\exists}\ \ \underset{\deg(g)<s}{\underset{g\in\mathbb{R}[x]}{\forall}%
}\ \ \underset{k}{\exists}\ \ c_k<0 \\
\Longleftarrow\  &\ \underset{0<\theta_{1}\leq
\ldots\leq\theta_{\ell}<\frac{\pi}{2}}{\forall}\ \ \underset{r_{\theta_{1}},\ldots
,r_{\theta_{\ell}}>0}{\exists}\ \ \underset{\deg(g)=s-1}{\underset{g\in\mathbb{R}[x]}{\forall}%
}\ \ \underset{k}{\exists}\ \ c_k<0 \ \ \ \ \text{by Lemma } \ref{lem:deg} \\
\Longleftarrow\  &\ \underset{0<\theta_{1}\leq
\ldots\leq\theta_{\ell}<\frac{\pi}{2}}{\forall}\ \ \underset{r_{\theta_{1}},\ldots
,r_{\theta_{\ell}}>0}{\exists}\ \ \underset{\deg(g)=s-1}{\underset{g\in\mathbb{R}[x]}{\forall}%
}\ \ \underset{k \in \{s-2,\, 2\ell+s-5,\, 2\ell+s-2\}}{\exists}\ \ c_k<0 \ \ \ \ \text{by Lemma } \ref{lem:subset}.  \end{align*}
We prove this final claim to be true in the Existence of Radii Lemma, Lemma \ref{lem:q1m2}.
\end{description}
\end{proof}

\noindent Now we prove Lemmas \ref{lem:deg} and \ref{lem:subset}, cited above, and also several other lemmas required for the main claim, the Existence of Radii Lemma (Lemma \ref{lem:q1m2}).  \bigskip

\noindent First, we prove the small but essential claim that if there is no Poincar\'{e} multiplier, $g$, of a certain degree, then there is also no Poincar\'{e} multiplier with smaller degree. \medskip

\begin{lemma}[One Less Degree]
\label{lem:deg} $\underset{g,\, \deg(g)=s}{\nexists}\ \ \underset{k}{\forall
}\ \ c_{k}\geq0\ \ \ \ \Longrightarrow\ \ \ \ \underset{g,\, \deg
(g)<s}{\nexists}\ \ \underset{k}{\forall}\ \ c_{k}\geq0$
\end{lemma}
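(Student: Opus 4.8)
I plan to prove the statement in contrapositive form: \emph{if} some nonzero $g\in\mathbb{R}[x]$ with $\deg(g)=d<s$ satisfies $\operatorname*{coeff}(gf)\geq0$, \emph{then} some $\tilde g$ with $\deg(\tilde g)=s$ satisfies $\operatorname*{coeff}(\tilde g f)\geq0$; this is exactly ``negation of the conclusion implies negation of the hypothesis.'' (Here $f$ denotes the fixed polynomial $f_{\theta,r_\theta}$, and $g,\tilde g$ range over the same family used in the definition of $\operatorname*{opt}$ and in the standing notation — nonzero polynomials, equivalently monic polynomials after the usual positive rescaling.) The device is a one-step padding: set
\[
\tilde g \;=\; x^{\,s-d}\,g .
\]

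\noindent Then I would verify the three things that need checking. First, $\deg(\tilde g)=(s-d)+d=s$, and $\tilde g$ is again monic (resp.\ nonzero) whenever $g$ is, since multiplication by $x^{s-d}$ merely shifts the coefficient vector. Second,
\[
\tilde g f \;=\; x^{\,s-d}(gf),
\]
so $\operatorname*{coeff}(\tilde g f,x^{k})=\operatorname*{coeff}(gf,x^{\,k-(s-d)})$ for $k\geq s-d$, and $\operatorname*{coeff}(\tilde g f,x^{k})=0$ for $0\leq k<s-d$. Third, every entry of the coefficient vector of $gf$ is $\geq0$ by hypothesis and $0\geq0$, so every entry of the coefficient vector of $\tilde g f$ is $\geq0$, i.e.\ $\operatorname*{coeff}(\tilde g f)\geq0$. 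This exhibits a polynomial of degree exactly $s$ whose product with $f$ has non-negative coefficients, contradicting the hypothesis $\underset{g,\,\deg(g)=s}{\nexists}\ \underset{k}{\forall}\ c_k\geq0$; hence no $g$ with $\deg(g)<s$ can satisfy $\underset{k}{\forall}\,c_k\geq0$ either.

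\noindent I do not expect any genuine obstacle: the lemma is a soft structural fact, and the argument is a one-line padding. The only points worth a moment's care are the quantifier conventions — one must let $g$ range over \emph{nonzero} polynomials, since otherwise $g=0$ vacuously makes all $c_k\geq0$ and the statement would be meaningless — and the uniformity of the argument in $d$, which covers the boundary case $d=0$ with no change. If one prefers not to introduce the $s-d$ spurious zero coefficients at the bottom, an equivalent proof uses $\tilde g=(x+1)^{s-d}g$ instead: $(x+1)^{s-d}$ has all (binomial) coefficients strictly positive, so the Cauchy product $(x+1)^{s-d}(gf)$ has every coefficient $\geq0$ as a sum of products of non-negative reals, and $\tilde g$ is still monic of degree $s$. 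Either form closes the lemma.
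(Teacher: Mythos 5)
Your proposal is correct and is essentially identical to the paper's own proof: both argue by contrapositive and pad a lower-degree multiplier $g$ to degree $s$ by multiplying by $x^{s-d}$, observing that this merely shifts the (non-negative) coefficient vector of $gf$. The alternative $(x+1)^{s-d}$ padding you mention is a harmless variant; no gap either way.
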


\begin{proof}
We will prove via the contrapositive:
\[
\underset{g,\, \deg(g)<s}{\exists}\ \ \underset{k}{\forall}\ \ c_{k}%
\geq0\ \ \ \ \Longrightarrow\ \ \ \ \underset{g,\, \deg(g)=s}{\exists
}\ \ \underset{k}{\forall}\ \ c_{k}\geq0.
\]
Assume $\underset{g,\, \deg(g)<s}{\exists}\ \ \underset{k}{\forall
}\ \ c_{k}\geq0$. Then there exists a $g$ with $\deg(g)=t<s$ such that $g\,  f_{\theta
,r_{\theta}}$
has all non-negative coefficients. Let $u=s-t$. \bigskip

Consider the multiplier $x^{u}\, g$. Note that $(x^{u}\, g)\,  f_{\theta
,r_{\theta}}=x^{u}(g\,  f_{\theta
,r_{\theta}})$
must have all non-negative coefficients and $\deg(x^{u}\, g)=u+t=s$.  Then there exists a multiplier, $x^{u}\, g$ with degree equal to $s$
such that the product has all non-negative coefficients. \bigskip

Hence we have
\[
\underset{g,\, \deg(g)<s}{\exists}\ \ \underset{k}{\forall}\ \ c_{k}%
\geq0\ \ \ \ \Longrightarrow\ \ \ \ \underset{g,\, \deg(g)=s}{\exists
}\ \ \underset{k}{\forall}\ \ c_{k}\geq0.
\]

\end{proof}

\noindent Our ultimate goal for this section is to prove that there exist radii such that there is no multiplier with degree less than $b(  f_{\theta,r_{\theta}})$.  The One Less Degree Lemma (Lemma \ref{lem:deg}) states that it suffices to show there is no multiplier with degree $b(  f_{\theta,r_{\theta}})-1$. 
Next, we show that if we consider a multiplier of degree one less, it suffices to examine a specific subset of the coefficients in the product rather than all coefficients to prove the claim.  Note that the surprisingly small subset of coefficients was selected by examining the geometry of the coefficients and determining a minimally sufficient set.  This is the proof that a subset will suffice.  The proof that this particular subset satisfies the claim is given in Lemma \ref{lem:q1m2}.

\begin{lemma}[3 Coefficients]\label{lem:subset}
\begin{align*} 
& \underset{0<\theta_{1}\leq
\ldots\leq\theta_{\ell}<\frac{\pi}{2}}{\forall}\ \ \underset{r_{\theta_{1}},\ldots
,r_{\theta_{\ell}}>0}{\exists}\ \ \underset{\deg(g)=s-1}{\underset{g\in\mathbb{R}[x]}{\forall}%
}\ \ \underset{k \in \{s-2,\, 2\ell+s-5,\, 2\ell+s-2\}}{\exists}\ \ c_k<0 \\
\Longrightarrow \ \ \ \  & \underset{0<\theta_{1}\leq
\ldots\leq\theta_{\ell}<\frac{\pi}{2}}{\forall}\ \ \underset{r_{\theta_{1}},\ldots
,r_{\theta_{\ell}}>0}{\exists}\ \ \underset{\deg(g)=s-1}{\underset{g\in\mathbb{R}[x]}{\forall}%
}\ \ \underset{k}{\exists}\ \ c_k<0
\end{align*}
\end{lemma}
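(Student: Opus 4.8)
The plan is to observe that this lemma is a pure logical weakening, so essentially nothing has to be computed. The antecedent says that, for suitably chosen radii, every monic $g$ with $\deg g = s-1$ forces a strictly negative coefficient of $g\,f_{\theta,r_\theta}$ at one of the three prescribed positions $s-2$, $2\ell+s-5$, $2\ell+s-2$; the consequent only asks that \emph{some} coefficient of $g\,f_{\theta,r_\theta}$ be negative. So the implication will follow as soon as we check that the three prescribed indices genuinely lie in the index range of the coefficient vector of $g\,f_{\theta,r_\theta}$.

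Concretely, I would argue as follows. Assume the antecedent, fix arbitrary angles $0<\theta_1\le\cdots\le\theta_\ell<\tfrac{\pi}{2}$, and take the same radii $r_{\theta_1},\dots,r_{\theta_\ell}$ that the antecedent provides. Fix an arbitrary monic $g$ with $\deg g = s-1$; the antecedent hands us an index $k\in\{s-2,\,2\ell+s-5,\,2\ell+s-2\}$ with $c_k<0$. Since $g\,f_{\theta,r_\theta}$ has degree $2\ell+(s-1)=2\ell+s-1$, its coefficients are $c_0,\dots,c_{2\ell+s-1}$, so this same $k$ is a legitimate witness for the (weaker) existential quantifier in the consequent. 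Hence the consequent holds with the same radii, which is exactly what is required.

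The single genuine point to verify is the containment $\{s-2,\,2\ell+s-5,\,2\ell+s-2\}\subseteq\{0,1,\dots,2\ell+s-1\}$. The upper bounds are trivial. For the lower bounds, recall we are in the case $\ell\ge 2$ (the setting in which the notation $c_k$, $s$, $g$ is defined and in which Lemma \ref{lem:core} invokes this lemma), and that $0<\theta_i<\tfrac{\pi}{2}$ gives $\tfrac{\pi}{\theta_i}>2$, hence $\lceil\pi/\theta_i\rceil\ge 3$; therefore $s=b(f_{\theta,r_\theta})=\sum_{i=1}^{\ell}\bigl(\lceil\pi/\theta_i\rceil-2\bigr)\ge\ell\ge 2$. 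Thus $s-2\ge 0$ and $2\ell+s-5\ge 4+2-5\ge 0$, so all three indices are valid.

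I do not expect any real obstacle here: all the substance of the section is hidden in the antecedent — the non-obvious fact that those three coefficient positions already suffice is what Lemma \ref{lem:q1m2} must establish — while the present lemma is merely the bookkeeping step that lets the short list $\{s-2,\,2\ell+s-5,\,2\ell+s-2\}$ be substituted into the quantifier chain of Lemma \ref{lem:core} in place of the full index range.
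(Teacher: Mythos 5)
Your proposal is correct and matches the paper's own proof, which likewise treats the lemma as a pure logical weakening: it notes $\deg(g\,f_{\theta,r_\theta})=2\ell+s-1$ so the unrestricted existential ranges over $0\le k\le 2\ell+s-1$, and then observes that an existential over the three-element subset implies the existential over the full range. Your extra check that $s-2\ge 0$ and $2\ell+s-5\ge 0$ (via $s\ge\ell\ge 2$) is a small bit of bookkeeping the paper omits but does no harm.
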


\begin{proof}
Recall that
\begin{align*} s & =b\left(  f_{\theta,r_{\theta}}\right) \\
g & =x^{s-1}+b_{s-2}x^{s-2}+\cdots+b_{1}x+b_{0} \\
t_i & =\cos(\theta_i).
\end{align*}
Note that if $\deg(f)=2\ell$ and $\deg(g)=s-1$, then $\deg(g\, f)=2\ell+s-1$.
\begin{align*}   &\ \underset{0<\theta_{1}\leq
\ldots\leq\theta_{\ell}<\frac{\pi}{2}}{\forall}\ \ \underset{r_{\theta_{1}},\ldots
,r_{\theta_{\ell}}>0}{\exists}\ \ \underset{\deg(g)=s-1}{\underset{g\in\mathbb{R}[x]}{\forall}%
}\ \ \underset{k}{\exists}\ \ c_k<0 \\
\iff &\ \underset{0<\theta_{1}\leq
\ldots\leq\theta_{\ell}<\frac{\pi}{2}}{\forall}\ \ \underset{r_{\theta_{1}},\ldots
,r_{\theta_{\ell}}>0}{\exists}\ \ \underset{\deg(g)=s-1}{\underset{g\in\mathbb{R}[x]}{\forall}%
}\ \ \underset{0 \leq k \leq 2\ell+s-1}{\exists}\ \ c_k<0 \\ 
\Longleftarrow\ \,   &\ \underset{0<\theta_{1}\leq
\ldots\leq\theta_{\ell}<\frac{\pi}{2}}{\forall}\ \ \underset{r_{\theta_{1}},\ldots
,r_{\theta_{\ell}}>0}{\exists}\ \ \underset{\deg(g)=s-1}{\underset{g\in\mathbb{R}[x]}{\forall}%
}\ \ \underset{k \in \{s-2,\, 2\ell+s-5,\, 2\ell+s-2\}}{\exists}\ \ c_k<0. \end{align*}
\end{proof}

\noindent The next two lemmas are helpful in the proof of Lemma \ref{lem:q1m2} and are listed separately here to streamline the proof of Lemma \ref{lem:q1m2}.

\begin{lemma}[Coefficient Expressions]\label{lem:coeffexp} We have
\begin{align*}
c_{s-2}  &  =a_{0}b_{s-2}+a_{1}b_{s-3}+\sum_{i=2}^{s-2}a_{i}b_{(s-2)-i}\\
c_{2\ell+s-5}  
&  =a_{2\ell-4}+a_{2\ell-3}b_{s-2}+a_{2\ell-2}b_{s-3}+a_{2\ell-1}%
b_{s-4}+b_{s-5}\\
c_{2\ell+s-2}  
&  =a_{2\ell-1}+b_{s-2}%
\end{align*}
where
\begin{align*}
\ell  &  \geq 2 \\
 f_{\theta
,r_{\theta}}  &  =\prod_{i=1}^{\ell}\left(  x^{2}-2r_{i}t_{i}x+r_{i}^{2}\right)
=\prod_{i=1}^{\ell}(x-\alpha_{i})(x-\alpha_{\ell+i})=\sum_{i=0}^{2\ell}%
a_{i}x^{i}\\
s  &  =b( f_{\theta
,r_{\theta}})\\
g  &  =x^{s-1}+b_{s-2}x^{s-2}+\cdots+b_{1}x+b_{0}\\
c_{k}  &  =\operatorname*{coeff}(g\, f_{\theta
,r_{\theta}},x^{k}).
\end{align*}
\end{lemma}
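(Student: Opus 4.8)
The plan is to derive all three identities from the single convolution formula for the coefficients of a product, which is essentially the content of the Coefficients Lemma (Lemma~\ref{lem:bA}). Writing $f_{\theta,r_{\theta}}=\sum_{i=0}^{2\ell}a_{i}x^{i}$ with $a_{2\ell}=1$ and $g=\sum_{j=0}^{s-1}b_{j}x^{j}$ with $b_{s-1}=1$, Lemma~\ref{lem:bA} (equivalently, the elementary formula for the coefficients of a product) gives
\[
c_{k}=\operatorname*{coeff}\left(  g\,f_{\theta,r_{\theta}},x^{k}\right)  =\sum_{i}a_{i}\,b_{k-i},
\]
where the sum runs over all $i$ with $0\le i\le 2\ell$ and $0\le k-i\le s-1$; equivalently, adopt the convention $a_{m}=0$ for $m\notin\{0,\ldots,2\ell\}$ and $b_{m}=0$ for $m\notin\{0,\ldots,s-1\}$. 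Each of the three claimed formulas is then just this sum specialized to $k=s-2$, $k=2\ell+s-5$, and $k=2\ell+s-2$, with the nonvanishing terms written out explicitly; no property of the angles or radii is used beyond $\ell\ge 2$.

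For $k=2\ell+s-2$ the constraints $i\le 2\ell$ and $k-i\le s-1$ force $i\in\{2\ell-1,2\ell\}$, so $c_{2\ell+s-2}=a_{2\ell-1}b_{s-1}+a_{2\ell}b_{s-2}=a_{2\ell-1}+b_{s-2}$ after substituting $a_{2\ell}=b_{s-1}=1$. For $k=2\ell+s-5$ the same two constraints force $i\in\{2\ell-4,2\ell-3,2\ell-2,2\ell-1,2\ell\}$ (here $\ell\ge 2$ guarantees $2\ell-4\ge 0$), and reading off the five summands and again using $a_{2\ell}=b_{s-1}=1$ yields
\[
c_{2\ell+s-5}=a_{2\ell-4}+a_{2\ell-3}b_{s-2}+a_{2\ell-2}b_{s-3}+a_{2\ell-1}b_{s-4}+b_{s-5}.
\]
For $k=s-2$ the condition $k-i\le s-1$ is automatic once $i\ge 0$, so $i$ runs over $0\le i\le\min(s-2,2\ell)$; extending the range formally up to $s-2$ adds only terms with $a_{i}=0$, and separating the $i=0$ and $i=1$ summands gives $c_{s-2}=a_{0}b_{s-2}+a_{1}b_{s-3}+\sum_{i=2}^{s-2}a_{i}b_{(s-2)-i}$.

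The only delicate point is the endpoint bookkeeping: one must confirm that the index sets above are nonempty and that all referenced $a_{m}$, $b_{m}$ are legal, and in the degenerate small-$s$ situations (for instance $s\le 4$, where an index such as $s-5$ is negative) one leans on the convention that the corresponding $b_{m}$ equals $0$, which is precisely what makes the uniform statement hold. I expect this to be the only, and quite mild, obstacle; everything else is a direct expansion of the convolution product.
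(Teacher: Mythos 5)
Your proposal is correct and follows essentially the same route as the paper: apply the convolution formula from the Coefficients Lemma (Lemma~\ref{lem:bA}), specialize to $k=s-2$, $k=2\ell+s-5$, and $k=2\ell+s-2$, and simplify using $a_{2\ell}=b_{s-1}=1$. Your explicit attention to the index-range bookkeeping and the zero-convention for out-of-range coefficients is slightly more careful than the paper's treatment, but it is the same argument.
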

\begin{proof} Recall from the Coefficients Lemma (Lemma \ref{lem:bA}) for $0 \leq i \leq2\ell$ and $0 \leq j \leq
s-1$,
\begin{align*}
\operatorname*{coeffs}(gf)  &  =\left[
\begin{array}
[c]{ccc}%
b_{0} & \cdots & b_{s-1}%
\end{array}
\right]  \left[
\begin{array}
[c]{cccccc}%
a_{0} & \cdots & \cdots & a_{2\ell} &  & \\
& \ddots &  &  & \ddots & \\
&  & a_{0} & \cdots & \cdots & a_{2\ell}%
\end{array}
\right]
\end{align*}
Then
\begin{align*}
\operatorname*{coeff}(gf,x^{k})  &  =\sum_{i+j=k} a_{i}b_{j}\\
&  =\sum_{i=0}^{k} a_{i}b_{k-i}.%
\end{align*}
Hence, \[
\operatorname*{coeff}(gf,x^{k})=\displaystyle\sum_{i+j=k}
a_{i}b_{j}=\displaystyle\sum_{i=0}^{k} a_{i}b_{k-i}\;\; \text{for}\;\; 0 \leq k \leq2
\ell+s-1.\] 
\bigskip

\noindent Then we have
\begin{align*}
c_{s-2}  &  =\sum_{i=0}^{s-2} a_{i}b_{(s-2)-i}\\
&  =a_{0}b_{s-2}+ a_{1}b_{s-3}+\sum_{i=2}^{s-2}a_{i}b_{(s-2)-i}\\
c_{2\ell+s-5}  &  =\sum_{i+j=2\ell+s-5} a_{i}b_{j}\\
&  =a_{2\ell-4}b_{s-1}+a_{2\ell-3}b_{s-2}+a_{2\ell-2}b_{s-3}+a_{2\ell
-1}b_{s-4}+a_{2\ell}b_{s-5}\\
&  =a_{2\ell-4}+a_{2\ell-3}b_{s-2}+a_{2\ell-2}b_{s-3}+a_{2\ell-1}b_{s-4}+b_{s-5}%
\ \ \ \ \text{since } a_{2\ell}=b_{s-1}=1\\
c_{2\ell+s-2}  &  =\sum_{i+j=2\ell+s-2} a_{i}b_{j}\\
&  =a_{2\ell-1}b_{s-1}+ a_{2\ell}b_{s-2}\\
&  \ =a_{2\ell-1}+b_{s-2} \ \ \ \ \text{since } a_{2\ell}=b_{s-1}=1.
\end{align*}
\end{proof}

\noindent The following lemma verifies a property of elementary symmetric polynomials in the roots $\alpha_{1},\hdots,\alpha_{2\ell}$ that is required for the proof of the Existence of Radii Lemma (Lemma \ref{lem:q1m2}).
\begin{lemma}[Symmetric Polynomials]
\label{lem:epos} If $0<\theta_i<\dfrac{\pi}{2}$, then $e_{k}\left(
\alpha_{1},\hdots,\alpha_{2\ell}\right)  >0$ for $k=0,\ldots,2\ell$.
\end{lemma}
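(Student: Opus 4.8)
The plan is to reduce the positivity of the $e_k(\alpha_1,\ldots,\alpha_{2\ell})$ to the transparent positivity of the coefficients of $f_{\theta,r_\theta}(-x)$, via the root--coefficient dictionary already recorded in the paper, namely $a_i=(-1)^{2\ell-i}e_{2\ell-i}(\alpha_1,\ldots,\alpha_{2\ell})$. The case $\ell=0$ is immediate since $e_0=1>0$, so assume $\ell\ge 1$.

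First I would note that, since $0<\theta_i<\frac{\pi}{2}$, each $t_i=\cos\theta_i>0$, and therefore every quadratic $x^2+2r_it_ix+r_i^2$ has strictly positive coefficients. Consequently
\[
f_{\theta,r_\theta}(-x)=\prod_{i=1}^{\ell}\bigl(x^2+2r_it_ix+r_i^2\bigr)
\]
is a product of polynomials all of whose coefficients are positive, hence itself has all coefficients positive. Writing $f_{\theta,r_\theta}=\sum_{i=0}^{2\ell}a_ix^i$, the coefficient of $x^i$ in $f_{\theta,r_\theta}(-x)$ is $(-1)^i a_i$, so $(-1)^i a_i>0$ for every $0\le i\le 2\ell$.

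Next I would translate this back to the symmetric functions. From $a_i=(-1)^{2\ell-i}e_{2\ell-i}(\alpha_1,\ldots,\alpha_{2\ell})$ and the observation that $(-1)^{2\ell-i}=(-1)^i$, we get $e_{2\ell-i}(\alpha_1,\ldots,\alpha_{2\ell})=(-1)^i a_i>0$. Setting $k=2\ell-i$ and letting $i$ range over $0,\ldots,2\ell$ yields $e_k(\alpha_1,\ldots,\alpha_{2\ell})>0$ for all $k=0,\ldots,2\ell$, as claimed.

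The only delicate point is the sign bookkeeping --- keeping the parity factor $(-1)^i$ coming from the substitution $x\mapsto -x$ consistent with the parity factor $(-1)^{2\ell-i}$ in the root--coefficient identity --- and once these are lined up the statement follows immediately. (Alternatively one could argue directly by induction on $\ell$, using that multiplication by $x^2+2r_\ell t_\ell x+r_\ell^2$, a polynomial with positive coefficients, preserves nonnegativity of coefficients; but the $f_{\theta,r_\theta}(-x)$ reformulation makes the argument shortest.)
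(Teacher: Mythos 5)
Your argument is correct, and it takes a genuinely different (if closely related) route from the paper. The paper proves the lemma by explicit induction on $\ell$: it writes the coefficient recursion for $f_{\ell+1}=(x^2-2r_{\ell+1}t_{\ell+1}x+r_{\ell+1}^2)f_\ell$, translates it into the identity
\[
e_{2(\ell+1)-k}\left(\alpha_{1},\ldots,\alpha_{2(\ell+1)}\right)=e_{2\ell-(k-2)}+2r_{\ell+1}t_{\ell+1}\,e_{2\ell-(k-1)}+r_{\ell+1}^{2}\,e_{2\ell-k},
\]
and concludes positivity from the inductive hypothesis together with $r_{\ell+1},t_{\ell+1}>0$. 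You instead substitute $x\mapsto -x$, observe that $f_{\theta,r_\theta}(-x)=\prod_i(x^2+2r_it_ix+r_i^2)$ is a product of quadratics with strictly positive coefficients, invoke the closure of positive-coefficient (full-support) polynomials under multiplication, and then undo the sign bookkeeping via $a_i=(-1)^{2\ell-i}e_{2\ell-i}$ and $(-1)^{2\ell-i}=(-1)^i$; all of these steps check out. The trade-off is mostly cosmetic: your reformulation is shorter and makes the positivity conceptually transparent, but the closure fact you cite is itself proved by exactly the convolution/induction argument the paper writes out (every term $p_iq_j$ in $\sum_{i+j=k}p_iq_j$ is positive and at least one such term exists for each $k$ up to the degree of the product), so the two proofs have the same mathematical content, just packaged differently --- as your own parenthetical remark acknowledges. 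If you keep your version, it would be worth stating the closure fact explicitly with that one-line justification rather than leaving it implicit.
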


\begin{proof}
We will induct on $\ell$, the number of quadratic factors of $f$ with non-real roots.

\begin{description}[align=left,labelwidth=1em]
\item[\sf Base Case:] $\ell=0$. 

Note that $e_{0}=1>0$.

\item[\sf Hypothesis:] $\ell \geq 1$.

Assume $e_{k}\left(  \alpha_{1},\hdots,\alpha_{2\ell
}\right)  >0$ for $\ell\geq1$ and $0 \leq k \leq2\ell$.

\item[\sf Induction Step:] 

Prove $e_{k}\left(  \alpha_{1},\hdots,\alpha_{2(\ell
+1)}\right)  >0$ for $0 \leq k \leq2(\ell+1)$.

Let $f_{\ell}=\displaystyle\prod_{i=1}^{\ell}\left(  x^{2}-2r_{i}t_{i}%
x+r_{i}^{2}\right)  $ and $a_{\ell,k}=\operatorname*{coeff}(f_{\ell},x^{k})$.
Note that
\begin{align*}
f_{\ell+1}\  &  \ =\left(  x^{2}-2r_{\ell+1}t_{\ell+1}x+r_{\ell+1}^{2}\right)
f_{\ell}\\
a_{\ell+1,k}\  &  \ =a_{\ell,k-2}-2r_{\ell+1,t+1}a_{\ell,k-1}+r_{\ell+1}%
^{2}a_{\ell,k}.%
\end{align*}
Then
\begin{align*}
a_{\ell+1,k}  &  =(-1)^{2(\ell+1)-k}e_{2(\ell+1)-k}\left(  \alpha
_{1},\hdots,\alpha_{2(\ell+1)}\right) \\
&  =(-1)^{2\ell-(k-2)}e_{2\ell-(k-2)}\left(  \alpha_{1},\hdots,\alpha_{2\ell
}\right) \\
&  \ \ \ \ -2r_{\ell+1}t_{\ell+1}(-1)^{2\ell-(k-1)}e_{2\ell-(k-1)}\left(
\alpha_{1},\hdots,\alpha_{2\ell}\right) \\
&  \ \ \ \ +r_{\ell+1}^{2}(-1)^{2\ell-k}e_{2\ell-k}\left(  \alpha
_{1},\hdots,\alpha_{2\ell}\right).
\end{align*}
Hence, by dividing the above by $\left(-1\right)^{2(\ell+1)-k}$, we have
\begin{align*}
e_{2(\ell+1)-k}\left(  \alpha_{1},\hdots,\alpha_{2(\ell+1)}\right)   &
=e_{2\ell-(k-2)}\left(  \alpha_{1},\hdots,\alpha_{2\ell}\right) \\
&  \ \ \ \ -2r_{\ell+1}t_{\ell+1}(-1)^{-1}e_{2\ell-(k-1)}\left(  \alpha
_{1},\hdots,\alpha_{2\ell}\right) \\
&  \ \ \ \ +r_{\ell+1}^{2}(-1)^{-2}e_{2\ell-k}\left(  \alpha_{1}%
,\hdots,\alpha_{2\ell}\right) \\
&  =e_{2\ell-(k-2)}\left(  \alpha_{1},\hdots,\alpha_{2\ell}\right) \\
&  \ \ \ \ +2r_{\ell+1}t_{\ell+1}e_{2\ell-(k-1)}\left(  \alpha_{1}%
,\hdots,\alpha_{2\ell}\right) \\
&  \ \ \ \ +r_{\ell+1}^{2}e_{2\ell-k}\left(  \alpha_{1},\hdots,\alpha_{2\ell
}\right) \\
&  >0
\end{align*}
by the inductive hypothesis and the fact that $r_{\ell+1},t_{\ell+1}>0$.
\end{description}
\end{proof} \medskip

\noindent The next lemma provides witnesses for a claim that is utilized in the proof of the Existence of Radii Lemma (Lemma \ref{lem:q1m2}).  It is stated separately to preserve the intuitive flow of the proof of the Existence of Radii Lemma. \medskip

\begin{lemma}[Partial Witness]\label{lem:witness} $\underset{\ell \geq 3}{\forall}\ \ \underset{1>t_{1}\geq\cdots\geq t_{\ell
}>0}{\forall}\ \ \underset{r_{1},\ldots,r_{\ell}>0}{\exists}%
\ \ C(r)<1$
where 
$$C(r)=\dfrac{e_{0}\left(  \alpha_{1},\ldots,\alpha_{\ell-1},\alpha
_{\ell+1},\ldots,\alpha_{2\ell-1}\right)  \, e_{2\ell-2}\left( \alpha_{1},\ldots,\alpha_{\ell-1},\alpha
_{\ell+1},\ldots,\alpha_{2\ell-1}\right)
}{e_{1}\left(\alpha_{1},\ldots,\alpha_{\ell-1},\alpha
_{\ell+1},\ldots,\alpha_{2\ell-1}\right)  \, e_{2\ell-3}\left( \alpha_{1},\ldots,\alpha_{\ell-1},\alpha
_{\ell+1},\ldots,\alpha_{2\ell-1}\right)  }.$$  \end{lemma}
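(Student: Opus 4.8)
The plan is to evaluate $C(r)$ in closed form and then choose the radii so as to make its denominator large.

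First I would observe that the $2\ell-2$ numbers $\alpha_1,\ldots,\alpha_{\ell-1},\alpha_{\ell+1},\ldots,\alpha_{2\ell-1}$ appearing in $C(r)$ are precisely the conjugate pairs $r_ie^{\pm i\theta_i}$ for $1\le i\le\ell-1$, hence the roots of $\prod_{i=1}^{\ell-1}(x^2-2r_it_ix+r_i^2)$. Writing $P=\prod_{i=1}^{\ell-1}r_i^2$, Vieta's formulas together with the identities $\alpha_i\alpha_{\ell+i}=r_i^2$ and $\alpha_i^{-1}+\alpha_{\ell+i}^{-1}=2t_i/r_i$ give
\[
e_0=1,\qquad e_1=2\sum_{i=1}^{\ell-1}r_it_i,\qquad e_{2\ell-2}=P,\qquad e_{2\ell-3}=P\sum_j\beta_j^{-1}=2P\sum_{i=1}^{\ell-1}\frac{t_i}{r_i},
\]
where for the last one I use $e_{m-1}(\beta)=e_m(\beta)\sum_j\beta_j^{-1}$, the sum ranging over all $2\ell-2$ roots $\beta_j$. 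These quantities are all strictly positive, consistently with the Symmetric Polynomials Lemma (Lemma \ref{lem:epos}). Substituting and cancelling the common factor $P$ yields the clean identity
\[
C(r)=\frac{e_0\,e_{2\ell-2}}{e_1\,e_{2\ell-3}}=\frac{1}{4\left(\sum_{i=1}^{\ell-1}r_it_i\right)\left(\sum_{i=1}^{\ell-1}\frac{t_i}{r_i}\right)}.
\]

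Then I would exhibit the witness. Because $\ell\ge3$, the index set $\{1,\ldots,\ell-1\}$ contains both $1$ and $2$, and $0<t_2\le t_1<1$. I would take $r_1=1/(t_1t_2)$ and $r_2=r_3=\cdots=r_\ell=1$ (the value of $r_\ell$ is irrelevant, as $r_\ell$ does not occur in $C(r)$). Keeping only the $i=1$ term of the first sum and the $i=2$ term of the second, both positive, one gets $\left(\sum r_it_i\right)\left(\sum t_i/r_i\right)\ge(r_1t_1)(t_2/r_2)=(1/t_2)\,t_2=1$, hence $C(r)\le\tfrac14<1$, as required.

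I do not expect a serious obstacle here. The only genuine content is (i) recognizing that $C(r)$ collapses to the reciprocal of four times the Cauchy--Schwarz-type product $\left(\sum r_it_i\right)\left(\sum t_i/r_i\right)$, and (ii) noticing that the hypothesis $\ell\ge3$ is exactly what makes this product contain a true cross term $r_1t_1\cdot t_2/r_2$ which an unbalanced choice of radii can push above $1/4$; for $\ell=2$ the product degenerates to the constant $t_1^2$ and the statement would fail whenever $t_1\le\tfrac12$. The minor points that need care are the bookkeeping of which radii enter $C(r)$ (only $r_1,\ldots,r_{\ell-1}$) and the Vieta sign computation for $e_{2\ell-3}$, both routine.
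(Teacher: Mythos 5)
Your proof is correct and follows essentially the same route as the paper: both reduce $C(r)$ to the closed form $\frac{1}{4\left(\sum_{i=1}^{\ell-1} r_i t_i\right)\left(\sum_{i=1}^{\ell-1} t_i/r_i\right)}$ via the elementary symmetric polynomials of the conjugate-pair roots and then exhibit explicit radii making the product in the denominator at least $1$. The only difference is the cosmetic choice of witness (the paper takes $r_1=2t_1$ and $r_i=1/(2t_i)$ for $2\le i\le \ell-1$, yielding a product strictly greater than $1$; you take $r_1=1/(t_1t_2)$ and $r_i=1$, isolating the cross term $r_1t_1\cdot t_2/r_2=1$), and both choices are valid.
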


\begin{proof} Note
\begin{align*}
e_{0}\left( \alpha_{1},\ldots,\alpha_{\ell-1},\alpha
_{\ell+1},\ldots,\alpha_{2\ell-1}\right)   &  =1\\
e_{1}\left( \alpha_{1},\ldots,\alpha_{\ell-1},\alpha
_{\ell+1},\ldots,\alpha_{2\ell-1}\right)   &  =\alpha_{1}+\cdots+\alpha_{\ell-1}+\alpha
_{\ell+1}+\cdots+\alpha_{2\ell-1}\\
&  =\left(  \alpha_{1}+\alpha_{\ell+1}\right)  +\cdots+\left(  \alpha_{\ell-1
}+\alpha_{2\ell-1}\right) \\
&  =2r_{1}t_{1}+\cdots+2r_{\ell-1}t_{\ell-1}\\
e_{2\ell-3}\left(\alpha_{1},\ldots,\alpha_{\ell-1},\alpha
_{\ell+1},\ldots,\alpha_{2\ell-1}\right)   &  =\sum_{i \in\{1,\ldots,\ell-1
,\ell+1,\ldots,2\ell-1\}} \left(  \prod_{j \neq i} \alpha_{j} \right) \\
&  =\sum_{1\leq i \leq\ell-1} \left(  \prod_{j \neq i} \alpha_{j}+\prod_{j
\neq\ell+i} \alpha_{j}\right) \\
&  =\sum_{1\leq i \leq\ell-1} \left(  \left(  \alpha_{\ell+i}+\alpha
_{i}\right)  \prod_{j \neq i,\ell+i} \alpha_{j}\right) \\
&  = \sum_{1\leq i \leq\ell-1} \left(  \left(  2r_{i}t_{i}\right)  \prod_{j
\neq i} r_{j}^{2}\right) \\
e_{2\ell-2}\left( \alpha_{1},\ldots,\alpha_{\ell-1},\alpha
_{\ell+1},\ldots,\alpha_{2\ell-1}\right)   &  =\alpha_{1}\cdots\alpha_{\ell-1}%
\alpha_{\ell+1}\cdots\alpha_{2\ell-1}\\
&  =\alpha_{1}\alpha_{\ell+1} \cdots\alpha_{\ell-1}\alpha_{2\ell-1}\\
&  =r_{1}^{2} \cdots r_{\ell-1}^{2}.%
\end{align*}
Then
\begin{align*}
C(r)  &  =\dfrac{e_{0}\left( \alpha_{1},\ldots,\alpha_{\ell-1},\alpha
_{\ell+1},\ldots,\alpha_{2\ell-1}\right)  \, e_{2\ell-2}\left( \alpha_{1},\ldots,\alpha_{\ell-1},\alpha
_{\ell+1},\ldots,\alpha_{2\ell-1}\right)
}{e_{1}\left( \alpha_{1},\ldots,\alpha_{\ell-1},\alpha
_{\ell+1},\ldots,\alpha_{2\ell-1}\right)  \, e_{2\ell-3}\left( \alpha_{1},\ldots,\alpha_{\ell-1},\alpha
_{\ell+1},\ldots,\alpha_{2\ell-1}\right)  }\\
&  =\dfrac{r_{1}^{2} \cdots r_{\ell-1}^{2}}{\left(  2r_{1}t_{1}+\cdots
+2r_{\ell-1}t_{\ell-1}\right)  \left(  \sum_{1\leq i \leq\ell-1} \left(
\left(  2r_{i}t_{i}\right)  \prod_{j \neq i} r_{j}^{2}\right)  \right)  }\\
&  =\dfrac{r_{1}\cdots r_{\ell-1}}{\left(  2r_{1}t_{1}+\cdots+2r_{\ell-1
}t_{\ell-1}\right)  \left(  \sum_{1\leq i \leq\ell-1} \left(  \left(
2t_{i}\right)  \prod_{j \neq i} r_{j}\right)  \right)  }\\
&  =\dfrac{1}{\left(  2r_{1}t_{1}+\cdots+2r_{\ell-1}t_{\ell-1}\right)  \left(
\dfrac{2t_{1}}{r_{1}}+\cdots+\dfrac{2t_{\ell-1}}{r_{\ell-1}} \right)  }.%
\end{align*} \smallskip

\noindent Consider the following witness candidates for the existentially
quantified variables $r_{1},\ldots,r_{\ell-1}$:
$$r_1=2t_1,\ \ \ r_{i}=\dfrac{1}{2t_{i}}\ \ \ \text{for } 2 \leq i \leq \ell-1.$$

\noindent Obviously, $r_{1},\ldots,r_{\ell-1}>0$. Note
\begin{align*}
C(r)=\  &  \dfrac{1}{\left(  2r_{1}t_{1}+2r_2t_2+\cdots+2r_{\ell-1}t_{\ell-1}\right)  \left(  \dfrac{2t_{1}}{r_{1}}+\dfrac{2t_{2}}{r_{2}}+\cdots+\dfrac{2t_{\ell-1}}{r_{\ell-1}} \right)  }\\
=\  &  \dfrac{1}{\left(2\left(2t_1\right)t_1+ 2\left(  \dfrac
{1}{2t_{2}}\right)  t_{2}+\cdots+2\left(  \dfrac{1}{2t_{\ell-1}}\right)
t_{\ell-1}\right)  \left(  \dfrac{2t_{1}}{2t_{1}}+\dfrac{2t_{2}}{\frac
{1}{2t_{2}}}+\cdots+\dfrac{2t_{\ell-1}}{\frac{1}{2t_{\ell-1}}} \right)  }\\
=\  &  \dfrac{1}{\left( 4t_1^2+1+\cdots+1\right)  \left(1+ 4t_{2}%
^{2}+\cdots+4t_{\ell-1}^{2}\right)  }\\
<\  &  1.
\end{align*}
Hence the candidates are witnesses.
\end{proof}

\noindent Note that the candidates provided above were discovered via guess-and-check in an attempt to manipulate the expression $C(r)$ into a fraction of the form $\dfrac{1}{1+\text{positive terms}}$ where the stated property would be clear. \bigskip

\noindent Finally, we provide the Existence of Radii Lemma, the core argument supporting the All Angles in Quadrant 1 Lemma (Lemma \ref{lem:core}).  One of the biggest obstacles was in determining which coefficients to examine to prove the claim.  You will notice that different coefficients are required based on the number of angles in this quadrant and where they lie.  For example, the coefficients needed to prove the claim when there are two angles $\dfrac{\pi}{3} \leq \theta_1\leq\theta_2<\dfrac{\pi}{2}$ are different from the coefficients needed when $0 < \theta_1<\dfrac{\pi}{3}$ and $\dfrac{\pi}{3} \leq \theta_2<\dfrac{\pi}{2}$.

\begin{lemma}[Existence of Radii]
\label{lem:q1m2} 
$\underset{\ell \geq 2}{\forall}\ \ \underset{0<\theta_{1}\leq
\ldots\leq\theta_{\ell}<\frac{\pi}{2}}{\forall}\ \ \underset{r_{\theta_{1}},\ldots
,r_{\theta_{\ell}}>0}{\exists}\ \ \underset{\deg(g)=s-1}{\underset{g\in\mathbb{R}[x]}{\forall}%
}\ \ \underset{k \in \{s-2,\, 2\ell+s-5,\, 2\ell+s-2\}}{\exists}\ \ c_k<0$

\end{lemma}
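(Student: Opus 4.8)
The plan is to prove the statement by contradiction, after collapsing the radius search to one dimension. Fix $\ell\geq 2$ and angles $0<\theta_1\leq\cdots\leq\theta_\ell<\frac{\pi}{2}$; I must produce radii $r_1,\ldots,r_\ell>0$ for which no monic $g=x^{s-1}+b_{s-2}x^{s-2}+\cdots+b_0$ makes all three of $c_{s-2}$, $c_{2\ell+s-5}$, $c_{2\ell+s-2}$ nonnegative, so I would assume such a $g$ exists (for radii yet to be chosen) and force a contradiction. The working tools are the closed forms for the three coefficients from Lemma~\ref{lem:coeffexp}; the identity $a_i=(-1)^i e_{2\ell-i}(\alpha_1,\ldots,\alpha_{2\ell})$ combined with Lemma~\ref{lem:epos}, which yields $\operatorname{sign}(a_i)=(-1)^i$ so that every $a_i$ carries a definite, angle‑independent sign; and the Partial Witness Lemma~\ref{lem:witness}, which will pin down all radii but one.

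First I would peel off the last quadratic factor, writing $f_{\theta,r_\theta}=(x^2-2r_\ell t_\ell x+r_\ell^2)\,h$ with $h=\prod_{i=1}^{\ell-1}(x^2-2r_i t_i x+r_i^2)$, and let $\tilde e_k$ denote the elementary symmetric functions of the $2\ell-2$ roots of $h$. Then $e_k(\alpha_1,\ldots,\alpha_{2\ell})=\tilde e_k+2r_\ell t_\ell\,\tilde e_{k-1}+r_\ell^2\,\tilde e_{k-2}$; in particular $e_{2\ell}=r_\ell^2\tilde e_{2\ell-2}$ exactly, while $e_2$, $e_3$, $e_{2\ell-1}$ have leading terms $r_\ell^2$, $r_\ell^2\tilde e_1$, $r_\ell^2\tilde e_{2\ell-3}$ as $r_\ell\to\infty$. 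For $\ell\geq 3$ I would set $r_1,\ldots,r_{\ell-1}$ equal to the explicit $\cos\theta$‑values of Lemma~\ref{lem:witness}, which makes $C(r)=\tilde e_0\tilde e_{2\ell-2}/(\tilde e_1\tilde e_{2\ell-3})<1$; comparing leading terms, this is exactly the $r_\ell\to\infty$ form of the inequality $e_{2\ell}\,e_2<e_{2\ell-1}\,e_3$ that the contradiction will need. The case $\ell=2$ I would treat on its own; there the three inequalities reduce to an explicit feasibility question whose infeasibility is forced by polynomial inequalities in $r_1,r_2$ (of the shape $e_4<e_1e_3$ and $e_3^2>e_2e_4$), which I can arrange by taking $r_2$ a suitable $\cos\theta$‑dependent multiple of $r_1$.

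With the radii fixed, the contradiction would run as follows. From $c_{2\ell+s-2}=a_{2\ell-1}+b_{s-2}\geq 0$ one gets $b_{s-2}\geq -a_{2\ell-1}=e_1$, which grows like $2r_\ell t_\ell$. Substituting this into $c_{2\ell+s-5}=e_4-e_3b_{s-2}+e_2b_{s-3}-e_1b_{s-4}+b_{s-5}\geq 0$ and into $c_{s-2}=e_{2\ell}b_{s-2}-e_{2\ell-1}b_{s-3}+e_{2\ell-2}b_{s-4}-\cdots\geq 0$, I would extract from the former a lower bound of order $e_3/e_2$ on the ratio $b_{s-3}/b_{s-2}$ and from the latter an upper bound of order $e_{2\ell}/e_{2\ell-1}$; for $r_\ell$ large enough that $e_3\,e_{2\ell-1}>e_2\,e_{2\ell}$ — equivalently $C(r)<1$ — these bounds are incompatible, so no admissible $g$ exists, and the radii $(r_1,\ldots,r_{\ell-1})$ from Lemma~\ref{lem:witness} together with a sufficiently large $r_\ell$ are the required witness.

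The step I expect to be the real obstacle is making that last extraction honest: both $c_{2\ell+s-5}$ and $c_{s-2}$ carry several free coefficients ($b_{s-4}$, $b_{s-5}$, $b_{s-6}$, and so on) beyond $b_{s-2}$ and $b_{s-3}$, so one cannot simply read off a ratio, and one must show that the freedom in these extra coefficients cannot be spent to satisfy all three constraints simultaneously once $r_\ell$ is large. As the remark preceding the lemma flags, which of the three coefficients actually carries the argument depends on the angle configuration — how many $\theta_i$ lie below $\frac{\pi}{3}$ versus in $[\frac{\pi}{3},\frac{\pi}{2})$, since this controls the size of $s$ relative to $2\ell$ and hence which $b_i$ even appear in $c_{s-2}$ — so I anticipate a case split on the angles, with ``Partial Witness radii plus a large $r_\ell$'' serving as a uniform witness across cases and a terminal sign check, carried out through the $e_k$‑expressions for the $a_i$, closing each case.
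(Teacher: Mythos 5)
Your setup matches the paper's almost exactly: the same three coefficients from Lemma~\ref{lem:coeffexp}, the same sign information $a_i=(-1)^ie_{2\ell-i}$ with Lemma~\ref{lem:epos}, the radii $r_1,\ldots,r_{\ell-1}$ taken from the Partial Witness Lemma so that $C(r)<1$, a large final radius $r_\ell$, the observation that $c_{2\ell+s-2}\geq 0$ forces $b_{s-2}\geq e_1$, and the comparison of $e_3/e_2$ against $e_{2\ell}/e_{2\ell-1}$ (whose limiting form as $r_\ell\to\infty$ is exactly $C(r)<1$). The paper also isolates the same degenerate case you do, namely $s=2$ (which occurs only for $\ell=2$ with both angles in $[\pi/3,\pi/2)$), handling it with just $c_{2\ell+s-5}$ and $c_{2\ell+s-2}$ via the condition $e_4<e_1e_3$ for large $r_2$; and for $\ell=2$, $s\geq3$ it notes $C(r)=1/(4t_1^2)<1$ automatically since $t_1>1/2$.

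However, the step you flag as ``the real obstacle'' is a genuine gap, and as written your contradiction does not close. From $c_{2\ell+s-5}\geq0$ and $c_{s-2}\geq0$ you get $b_{s-3}\geq \frac{e_3}{e_2}b_{s-2}-\frac{e_4-e_1b_{s-4}+b_{s-5}}{e_2}$ and $b_{s-3}\leq\frac{e_{2\ell}}{e_{2\ell-1}}b_{s-2}+\frac{1}{e_{2\ell-1}}\sum_{i\geq2}a_ib_{(s-2)-i}$; subtracting gives $\bigl(\tfrac{e_3}{e_2}-\tfrac{e_{2\ell}}{e_{2\ell-1}}\bigr)b_{s-2}\leq(\text{linear in }b_{s-4},\ldots,b_0)$, and since the right-hand side is unbounded over the free lower coefficients, no contradiction follows from the slope comparison alone. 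The paper's missing ingredient is a two-region covering of the $(b_{s-2},b_{s-3})$-plane for \emph{arbitrary fixed} $b_0,\ldots,b_{s-4}$: those free coefficients move only the \emph{intercepts} $d_0,d_1$ of the lines $c_{s-2}=0$ and $c_{2\ell+s-5}=0$, not their slopes, so once $\mu_{2\ell+s-5}>\mu_{s-2}$ the region where both constraints hold is a wedge confined to $b_{s-2}\leq p$, where $p=\frac{a_{2\ell-2}d_0-a_1d_1}{a_0a_{2\ell-2}-a_1a_{2\ell-3}}$ is the abscissa of their intersection (Claim 3 of the paper); and separately one must show $p<e_1$ for $r_\ell$ large (Claim 4), which the paper does by a degree count in $r_\ell$ ($\deg_{r_\ell}$ of the numerator of $p/e_1$ is at most $4$ versus $5$ for the denominator), so that $b_{s-2}\leq p<e_1$ contradicts $c_{2\ell+s-2}\geq0$. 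You never introduce the intersection point or the inequality $p<e_1$, and without it the freedom in $b_{s-4},\ldots,b_0$ is not neutralized; supplying exactly that comparison is what turns your outline into the paper's proof.
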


\begin{proof} Note
\begin{align*} &\ \underset{\ell \geq 2}{\forall}\ \ \underset{0<\theta_{1}\leq
\ldots\leq\theta_{\ell}<\frac{\pi}{2}}{\forall}\ \ \underset{r_{\theta_{1}},\ldots
,r_{\theta_{\ell}}>0}{\exists}\ \ \underset{\deg(g)=s-1}{\underset{g\in\mathbb{R}[x]}{\forall}%
}\ \ \underset{k \in \{s-2,\, 2\ell+s-5,\, 2\ell+s-2\}}{\exists}\ \ c_k<0 \\
\iff &\ \underset{\ell \geq 2}{\forall}\ \ \underset{1>t_{1}\geq
\ldots\geq t_{\ell}>0}{\forall}\ \ \underset{r_{\theta_{1}},\ldots
,r_{\theta_{\ell}}>0}{\exists}\ \ \underset{\{b_0,\hdots,b_{s-2}\}\in\mathbb{R}}{\forall}\ \ \underset{k \in \{s-2,\, 2\ell+s-5,\, 2\ell+s-2\}}{\exists}\ \ c_k<0
\end{align*}
since the leading coefficient of $g$ is assumed to be 1. \bigskip

\noindent Let $\ell \geq 2$ and $t_{1},\ldots,t_{\ell}\ $be such that $1>t_{1}\geq\cdots\geq
t_{\ell}>0$.  Let $r_{1}>0$. \bigskip

\begin{description}[align=left,labelwidth=1em]
\item[\sf Claim 1:] When $s=2$, $\underset{r_{\ell}^{\left(  1\right)  }>0}{\exists
}\ \ \underset{r_{\ell}\geq r_{\ell}^{\left(  1\right)  }}{\forall}\ \ \underset{b_{0}%
}{\forall}\ \  c_{2\ell+s-5}<0\ \vee\ c_{2\ell+s-2}<0$. 

Note that $s=2$ only when $\ell=2$ and $\dfrac{\pi}{3}\leq \theta_1\leq\theta_2<\dfrac{\pi}{2}$.  Note
\begin{align*}
c_{2\ell+s-5}  &  =a_{0}+a_{1}b_{0}\\
c_{2\ell+s-2}  &  =a_{3}+b_{0}.%
\end{align*}

We have
\[%
\begin{array}
[c]{lclclcl}%
c_{2\ell+s-5}<0 & \iff & b_{0}>-\dfrac{a_{0}}{a_{1}} & \iff & b_{0}>\dfrac
{e_{4}(\alpha_{1},\alpha_{2},\alpha_{3},\alpha_{4})}{e_{3}(\alpha_{1}%
,\alpha_{2},\alpha_{3},\alpha_{4})} & \iff & b_{0}>\dfrac{r_{1}r_{2}}%
{2r_{2}t_{1}+2r_{1}t_{2}}\\
c_{2\ell+s-2}<0 & \iff & b_{0}<-a_{3} & \iff & b_{0}<e_{1}(\alpha_{1}%
,\alpha_{2},\alpha_{3},\alpha_{4}) & \iff & b_{0}<2r_{1}t_{1}+2r_{2}t_{2}.%
\end{array}
\]

Note $\underset{r_{2}>0}{\exists}\ \underset{b_{0}}{\forall}%
\ c_{2\ell+s-5}<0\ \vee\ c_{2\ell+s-2}<0$. Note
\[%
\begin{array}
[c]{crl}
& \underset{b_{0}}{\forall}\ c_{2\ell+s-5}<0\ \vee\ c_{2\ell+s-2}<0 & \\
\Longleftarrow & \dfrac{r_{1}r_{2}}{2r_{2}t_{1}+2r_{1}t_{2}} & <2r_{1}%
t_{1}+2r_{2}t_{2}\\
\iff & \dfrac{r_{1}r_{2}}{(2r_{2}t_{1}+2r_{1}t_{2})(2r_{1}t_{1}+2r_{2}t_{2})}
& <1.
\end{array}
\]
Note
\[
\ \underset{b_{0}}{\forall}\ \lim_{r_{2}\rightarrow\infty} \dfrac{r_{1}r_{2}%
}{(2r_{2}t_{1}+2r_{1}t_{2})(2r_{1}t_{1}+2r_{2}t_{2})} =0
\]
since
\begin{align*}
\deg_{r_{2}}\left(  r_{1}r_{2} \right)   &  =1\\
\deg_{r_{2}}\left(  (2r_{2}t_{1}+2r_{1}t_{2})(2r_{1}t_{1}+2r_{2}t_{2})\right)
&  =2.
\end{align*}

\item Hence $\underset{r_{2}^{\left(  1\right)  }>0}{\exists
}\ \ \underset{r_{2}\geq r_{2}^{\left(  1\right)  }}{\forall}\ \ \underset{b_{0}%
}{\forall}\ \  c_{2\ell+s-5}<0\ \vee\ c_{2\ell+s-2}<0$.  The claim has been proved.

\end{description}

%\begin{example}
\bigskip
\noindent {\sf Example for Claim 1:} {\it 
The following two graphs demonstrate Claim 1 for the degree 4 polynomial
$$f=\left(x^2-2(1)\cos\left(\dfrac{10\pi}{24}\right)x+(1)^2\right)\left(x^2-2r_2\cos\left(\dfrac{11\pi}{24}\right)x+r_2^2\right)$$
where $\dfrac{\pi}{3}<\dfrac{10\pi}{24}<\dfrac{11\pi}{24}<\dfrac{\pi}{2}$.  Note that $r_1=1$ and $g=x+b_0$, as $s=2$.

\begin{center} \includegraphics[width=2in]{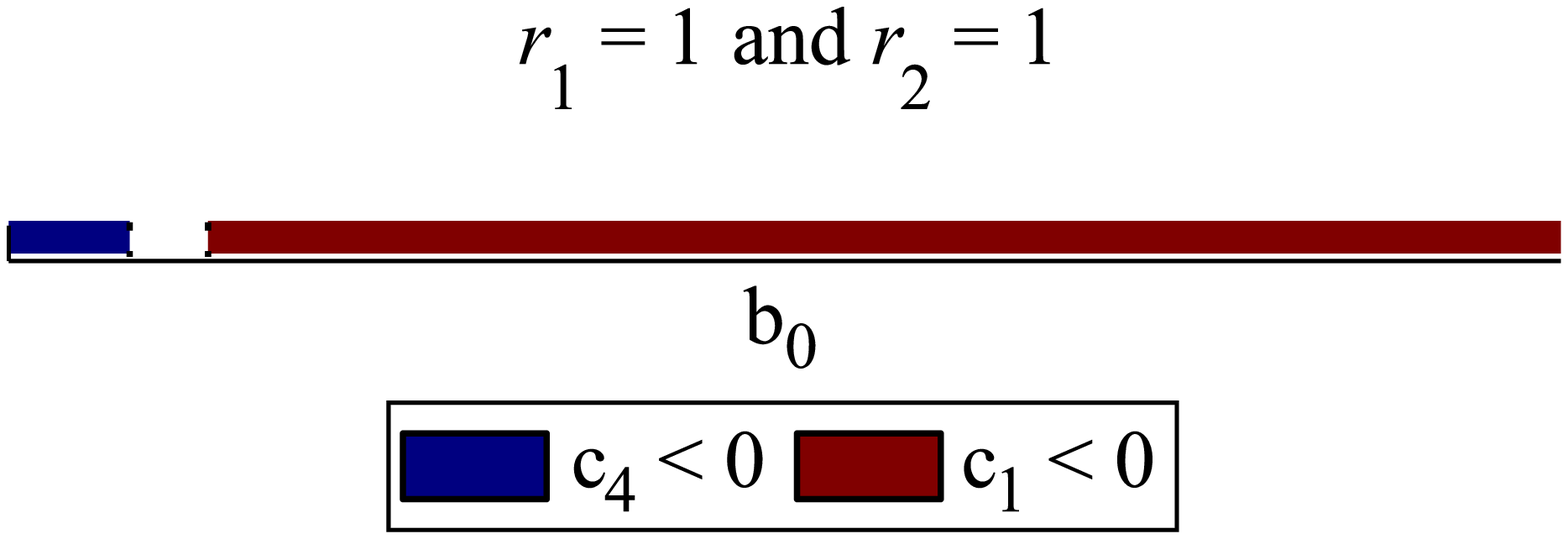} \hspace{1cm} \includegraphics[width=2in]{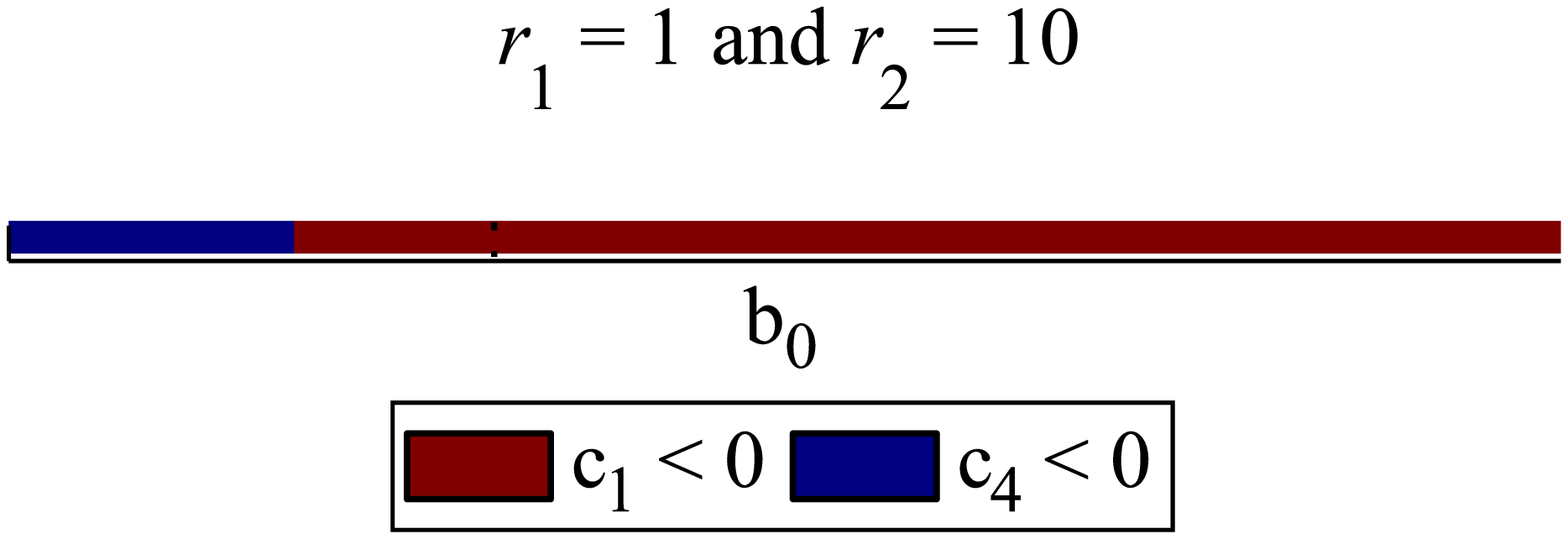} \end{center}
When both radii are equal to 1, there are potential values of $b_0$ that do not produce a negative coefficient.  When $r_2$ is increased to 10, at least one coefficient is negative for any $b_0$.} 
\bigskip

%\end{example}

\noindent For the remainder of the proof, let $s \geq3$.  Let $r_{1},\ldots,r_{\ell-1}>0$ be such that $C(r)<1$ is true, where
$$C(r)=\dfrac{e_{0}\left(  \alpha_{1},\ldots,\alpha_{\ell-1},\alpha
_{\ell+1},\ldots,\alpha_{2\ell-1}\right)  \, e_{2\ell-2}\left( \alpha_{1},\ldots,\alpha_{\ell-1},\alpha
_{\ell+1},\ldots,\alpha_{2\ell-1}\right)
}{e_{1}\left(\alpha_{1},\ldots,\alpha_{\ell-1},\alpha
_{\ell+1},\ldots,\alpha_{2\ell-1}\right)  \, e_{2\ell-3}\left( \alpha_{1},\ldots,\alpha_{\ell-1},\alpha
_{\ell+1},\ldots,\alpha_{2\ell-1}\right)  }.$$
The existence of such radii is justified based on the value of $\ell$.
\begin{enumerate}
\item In the case where $\ell=2$ and $s \geq 3$,
\begin{align*} C(r) & =\dfrac{e_{0}\left(  \alpha_{1},\alpha_{3}\right)  \, e_{2}\left( \alpha_{1},\alpha_{3}\right)
}{e_{1}\left(\alpha_{1},\alpha_{3}\right)  \, e_{1}\left( \alpha_{1},\alpha_{3}\right)  } \\
& =\dfrac{1 \cdot \alpha_1\alpha_3}{(\alpha_1+\alpha_3)(\alpha_1+\alpha_3)} \\
& =\dfrac{r_1^2}{4r_1^2t_1^2}\\
& =\dfrac{1}{4t_1^2} \end{align*}
Since $s \geq 3$, we must have $0<\theta_1<\dfrac{\pi}{3}$ and $t_1>\dfrac{1}{2}$.  Hence, $C(r)<1$.
\item In the case where $\ell \geq 3$, we know such radii exist from Lemma \ref{lem:witness}.
\end{enumerate} \medskip

\noindent Using the coefficient expressions from Lemma \ref{lem:coeffexp}, note
\begin{align*}
c_{s-2}  &  =0\iff b_{s-3}=-\dfrac{a_{0}}{a_{1}}b_{s-2}-\sum_{i=2}^{s-2}%
\dfrac{a_{i}}{a_{1}}b_{(s-2)-i}\\
c_{2\ell+s-5}  &  =0\iff b_{s-3}=-\dfrac{a_{2\ell-3}}{a_{2\ell-2}}%
b_{s-2}-\dfrac{a_{2\ell-1}b_{s-4}+b_{s-5}+a_{2\ell-4}}{a_{2\ell-2}}.
\end{align*} \smallskip

\noindent Let $b_{0},\ldots,b_{s-4}$ be arbitrary but fixed. \bigskip

\noindent Let $L_{k}$ be the line given by $c_{k}=0$ in $\left(b_{s-2},b_{s-3}\right)$ space. Let $\mu_{k}$ be the slope of
$L_{k}$. Then 
\begin{align*}
\mu_{s-2} & =-\dfrac{a_{0}}{a_{1}} \\
\mu_{2\ell+s-5} & =-\dfrac{a_{2\ell-3}}{a_{2\ell-2}}. \end{align*}

\noindent Since $a_{i}=(-1)^{2\ell-i}e_{2\ell-i}\left(  \alpha_{1},\hdots,\alpha
_{2\ell}\right)  $, we have
\begin{align*}
a_{0}  &  =e_{2\ell}\left(  \alpha_{1},\hdots,\alpha_{2\ell}\right) \\
a_{1}  &  =-e_{2\ell-1}\left(  \alpha_{1},\hdots,\alpha_{2\ell}\right) \\
a_{2\ell-3}  &  =-e_{3}\left(  \alpha_{1},\hdots,\alpha_{2\ell}\right) \\
a_{2\ell-2}  &  =e_{2}\left(  \alpha_{1},\hdots,\alpha_{2\ell}\right).
\end{align*}
Then 
\begin{align*}
\mu_{s-2} & =\dfrac{e_{2\ell}\left(  \alpha_{1},\hdots,\alpha_{2\ell
}\right)  }{e_{2\ell-1}\left(  \alpha_{1},\hdots,\alpha_{2\ell}\right)  } \\
\mu_{2\ell+s-5} & =\dfrac{e_{3}\left(  \alpha_{1},\hdots,\alpha_{2\ell}\right)
}{e_{2}\left(  \alpha_{1},\hdots,\alpha_{2\ell}\right)  }. \end{align*}

\begin{description}[align=left,labelwidth=1em]
\item[\sf Claim 2:] $\underset{r_{\ell}^{\left(  2\right)  }>0}{\exists
}\ \underset{r_{\ell}\geq r_{\ell}^{\left(  2\right)  }}{\forall}\ \mu
_{2\ell+s-5}>\mu_{s-2}$. 

Note as $r_{\ell}\rightarrow\infty$%
\begin{align*}
e_{2\ell}\left(  \alpha_{1},\ldots,\alpha_{2\ell}\right)   &  \rightarrow
\alpha_{\ell}\alpha_{2\ell}\ e_{2\ell-2}\left(  \alpha_{1},\ldots,\alpha_{\ell-1},\alpha
_{\ell+1},\ldots,\alpha_{2\ell-1}\right) \\
e_{2\ell-1}\left(  \alpha_{1},\ldots,\alpha_{2\ell}\right)   &  \rightarrow
\alpha_{\ell}\alpha_{2\ell}\ e_{2\ell-3}\left(  \alpha_{1},\ldots,\alpha_{\ell-1},\alpha
_{\ell+1},\ldots,\alpha_{2\ell-1}\right) \\
e_{3}\left(  \alpha_{1},\ldots,\alpha_{2\ell}\right)   &  \rightarrow
\alpha_{\ell}\alpha_{2\ell}\ e_{1}\left(  \alpha_{1},\ldots,\alpha_{\ell-1},\alpha
_{\ell+1},\ldots,\alpha_{2\ell-1}\right) \\
e_{2}\left(  \alpha_{1},\hdots,\alpha_{2\ell}\right)   &  \rightarrow
\alpha_{\ell}\alpha_{2\ell}\ e_{0}\left(  \alpha_{1},\ldots,\alpha_{\ell-1},\alpha
_{\ell+1},\ldots,\alpha_{2\ell-1}\right).
\end{align*}
Then
\begin{align*}
\mu_{s-2}  &  \rightarrow\dfrac{e_{2\ell-2}\left(   \alpha_{1},\ldots,\alpha_{\ell-1},\alpha
_{\ell+1},\ldots,\alpha_{2\ell-1}\right)  }{e_{2\ell
-3}\left(   \alpha_{1},\ldots,\alpha_{\ell-1},\alpha
_{\ell+1},\ldots,\alpha_{2\ell-1}\right)  }\\
\mu_{2\ell+s-5}  &  \rightarrow\dfrac{ e_{1}\left(   \alpha_{1},\ldots,\alpha_{\ell-1},\alpha
_{\ell+1},\ldots,\alpha_{2\ell-1}\right)  }%
{e_{0}\left(   \alpha_{1},\ldots,\alpha_{\ell-1},\alpha
_{\ell+1},\ldots,\alpha_{2\ell-1}\right)  }.%
\end{align*}

Then for sufficiently large $r_{\ell}$,
\[%
\begin{array}
[c]{crl}
& \mu_{2\ell+s-5} & >\mu_{s-2}\\
\iff & \dfrac{ e_{1}\left(   \alpha_{1},\ldots,\alpha_{\ell-1},\alpha
_{\ell+1},\ldots,\alpha_{2\ell-1}\right)  }{e_{0}\left(   \alpha_{1},\ldots,\alpha_{\ell-1},\alpha
_{\ell+1},\ldots,\alpha_{2\ell-1}\right)  } &
>\dfrac{e_{2\ell-2}\left(  \alpha_{1},\ldots,\alpha_{\ell-1},\alpha
_{\ell+1},\ldots,\alpha_{2\ell-1}\right)  }{e_{2\ell-3}\left(  \alpha_{1},\ldots,\alpha_{\ell-1},\alpha
_{\ell+1},\ldots,\alpha_{2\ell-1}\right)  }\\
\iff & 1 & >C(r)
\end{array}
\]
which is true for the $r_{1},\ldots,r_{\ell-1}$ we have previously chosen based on the claim from Lemma \ref{lem:witness}.  Recall that the elementary symmetric polynomials in these roots are always positive by Lemma \ref{lem:epos}.  The claim has been proved.  
\end{description}

\noindent Continuing with the proof of the lemma, let $r_{\ell}^{\left(  2\right)  }$ be such that $\underset{r_{\ell}\geq r_{\ell}^{\left(  2\right)  }}{\forall}\ \mu
_{2\ell+s-5}>\mu_{s-2}$. Such
$r_{\ell}^{\left(  2\right)  }\ $exists due to the previous claim. Let
$r_{\ell}\ $be arbitrary but fixed such that $r_{\ell}\ \geq r_{\ell}^{\left(
2\right)  }$. Then over the space $\left(  b_{s-2},b_{s-3}\right)  $, there exists a unique
intersection point of $L_{s-2}\ $and $L_{2\ell+s-5}$.  Let $\left(
p,q\right)  $ be the intersection point.

%\begin{example}
\bigskip
\noindent {\sf Example of $(p,q)$:} {\it 
In this example, $f=\left(x^2-2(1)\cos\left(\dfrac{7\pi}{24}\right)x+(1)^2\right)\left(x^2-2r_2\cos\left(\dfrac{10\pi}{24}\right)x+r_2^2\right)$, so $s=3$, $g=x^2+b_1x+b_0$, and $(p,q)$ is the intersection point of $L_1$ with $L_2$.
\begin{center} \includegraphics[width=2in]{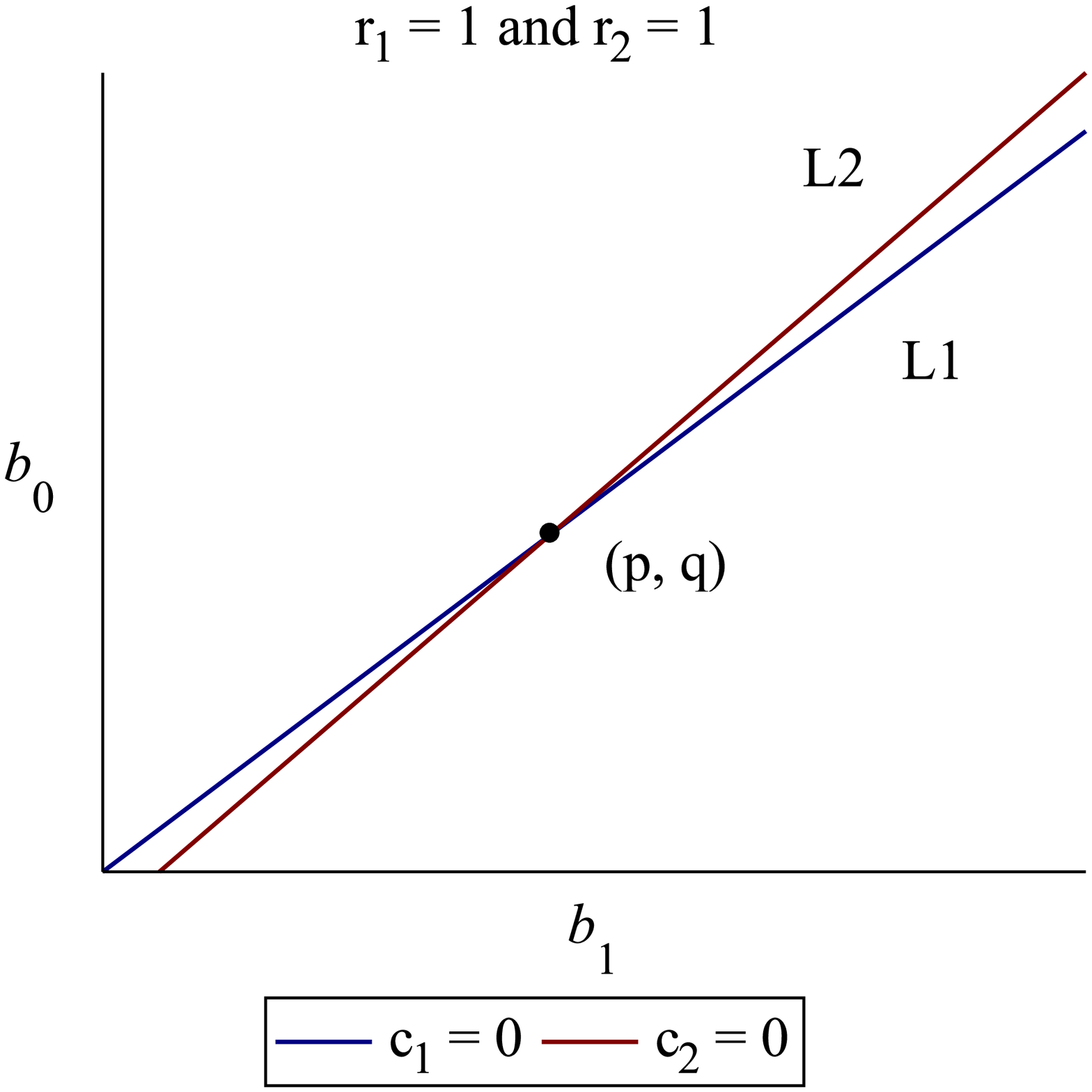} \hspace{1cm} \includegraphics[width=2in]{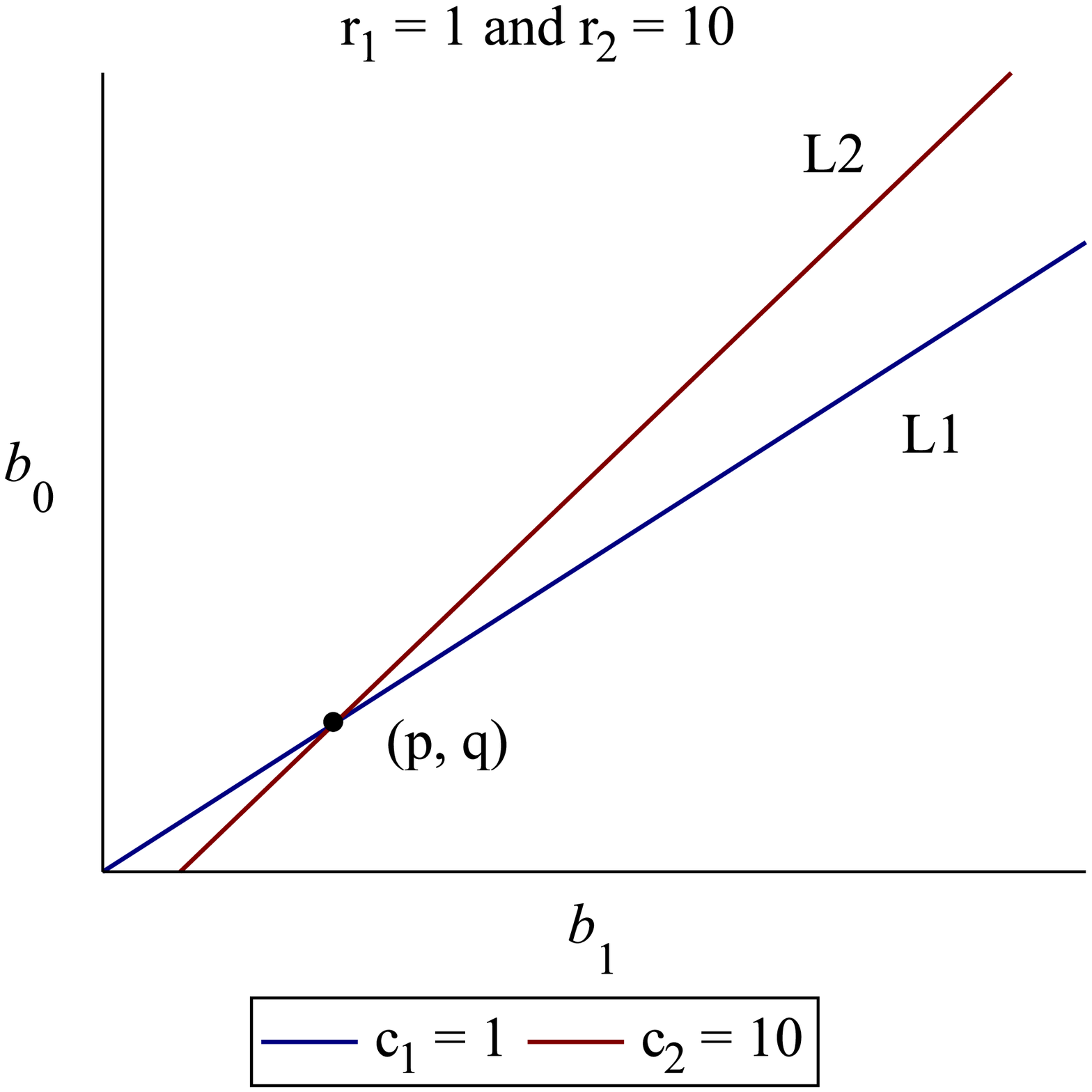} \end{center}
We will continue using this example to illustrate the remainder of the proof.} \bigskip 
%\end{example}

\noindent Continuing with the proof of the lemma, as shown below, we have $p=\dfrac{a_{2\ell-2}d_{0}-a_{1}d_{1}}{a_{0}a_{2\ell-2}%
-a_{1}a_{2\ell-3}}$ and \\ $q=\dfrac{a_{0}d_{1}-a_{2\ell-3}d_{0}}{a_{0}%
a_{2\ell-2}-a_{1}a_{2\ell-3}}$ where $d_{0}=-\displaystyle\sum_{i=2}%
^{s-2}a_{i}b_{(s-2)-i}$ and $d_{1}=-a_{2\ell-1}b_{s-4}-b_{s-5}-a_{2\ell-4}$.  Note%
\[%
\begin{array}
[c]{clc}
& c_{s-2}=0\ \ \ \wedge\ \ \ c_{2\ell+s-5}=0 & \\
\Longleftrightarrow & a_{0}b_{s-2}+a_{1}b_{s-3}+\sum_{i=2}^{s-2}%
a_{i}b_{(s-2)-i}=0 & \wedge\\
& a_{2\ell-4}+a_{2\ell-3}b_{s-2}+a_{2\ell-2}b_{s-3}+a_{2\ell-1}b_{s-4}%
+b_{s-5}=0 & \\
\Longleftrightarrow & a_{0}b_{s-2}+a_{1}b_{s-3}=-\sum_{i=2}^{s-2}%
a_{i}b_{(s-2)-i} & \wedge\\
& a_{2\ell-3}b_{s-2}+a_{2\ell-2}b_{s-3}=-a_{2\ell-1}b_{s-4}-b_{s-5}%
-a_{2\ell-4} & \\
\iff & a_{0}b_{s-2}+a_{1}b_{s-3}=d_{0}\ \ \ \ \text{where }d_{0}=-\sum
_{i=2}^{s-2}a_{i}b_{(s-2)-i} & \wedge\\
& a_{2\ell-3}b_{s-2}+a_{2\ell-2}b_{s-3}=d_{1}\ \ \ \ \text{where }
d_{1}=-a_{2\ell-1}b_{s-4}-b_{s-5}-a_{2\ell-4} & \\
& \  & \\
\Longleftrightarrow & \left[
\begin{array}
[c]{cc}%
a_{0} & a_{1}\\
a_{2\ell-3} & a_{2\ell-2}%
\end{array}
\right]  \left[
\begin{array}
[c]{c}%
b_{s-2}\\
b_{s-3}%
\end{array}
\right]  =\left[
\begin{array}
[c]{c}%
d_{0}\\
d_{1}%
\end{array}
\right]  & \\
& \  & \ \\
\Longleftrightarrow & b_{s-2}=\frac{\left\vert
\begin{array}
[c]{cc}%
d_{0} & a_{1}\\
d_{1} & a_{2\ell-2}%
\end{array}
\right\vert }{\left\vert
\begin{array}
[c]{cc}%
a_{0} & a_{1}\\
a_{2\ell-3} & a_{2\ell-2}%
\end{array}
\right\vert }\ \ \ \wedge\ \ \ b_{s-3}=\frac{\left\vert
\begin{array}
[c]{cc}%
a_{0} & d_{0}\\
a_{2\ell-3} & d_{1}%
\end{array}
\right\vert }{\left\vert
\begin{array}
[c]{cc}%
a_{0} & a_{1}\\
a_{2\ell-3} & a_{2\ell-2}%
\end{array}
\right\vert } & \\
& \  & \ \\
\Longleftrightarrow & b_{s-2}=\dfrac{a_{2\ell-2}d_{0}-a_{1}d_{1}}%
{a_{0}a_{2\ell-2}-a_{1}a_{2\ell-3}}\ \ \ \wedge\ \ \ b_{s-3}=\dfrac{a_{0}%
d_{1}-a_{2\ell-3}d_{0}}{a_{0}a_{2\ell-2}-a_{1}a_{2\ell-3}}. & 
\end{array}
\]

\begin{description}[align=left,labelwidth=1em]
\item[\sf Claim 3:]  $\underset{r_{\ell}\geq r_{\ell}^{\left(  2\right)  }}{\forall
}\ \underset{b_{s-2}>p}{\underset{b_{0},\ldots,b_{s-2}}{\forall}}%
\ c_{s-2}<0\ \vee\ c_{2\ell+s-5}<0$.

Let $r_{\ell}^{\left(  2\right)  }$ be such that $\underset{r_{\ell}\geq
r_{\ell}^{\left(  2\right)  }}{\forall}\ \mu_{2\ell+s-5}>\mu_{s-2}$. In Claim 2, we showed that such $r_{\ell}^{\left(  2\right)  }$ exists.  Let $r_{\ell}\geq r_{\ell}^{\left(  2\right)  }\ $be arbitrary but fixed. \bigskip

Let $b_{0},\ldots,b_{s-4}$ be arbitrary but fixed. We need to show
$\underset{b_{s-2}>p}{\underset{b_{s-2},b_{s-3}}{\forall}}\ c_{s-2}%
<0\ \vee\ c_{2\ell+s-5}<0$.

Over the space $\left(  b_{s-2},b_{s-3}\right)  $ where $b_{s-2}>p$, we have

\begin{enumerate}
\item $c_{2\ell+s-5}=0$ line and $c_{s-2}=0$ line do not intersect

\item $c_{2\ell+s-5}=0$ line is above $c_{s-2}=0$ line.
\end{enumerate}

Let $\left(  b_{s-2},b_{s-3}\right)  $ be an arbitrary but fixed point
such that $b_{s-2}>p$. Then $\left(  b_{s-2},b_{s-3}\right)  \ \,$is above
$L_{s-2}$ or below $L_{2\ell+s-5}$.  Note%
\[%
\begin{array}
[c]{lllll}%
\left(  b_{s-2},b_{s-3}\right)  \ \text{is above }L_{s-2} &
\Longleftrightarrow & b_{s-3}>-\frac{a_{0}}{a_{1}}b_{s-2}+\frac{d_{0}}{a_{1}%
}\ \  & \Longleftrightarrow & c_{s-2}<0\\
\left(  b_{s-2},b_{s-3}\right)  \ \text{is below }L_{2\ell+s-5}\  &
\Longleftrightarrow & b_{s-3}<-\frac{a_{2\ell-3}}{a_{2\ell-2}}b_{s-2}%
+\frac{d_{1}}{a_{2\ell-2}} & \Longleftrightarrow & c_{2\ell+s-5}<0
\end{array}
\]

\noindent since $a_{1}=(-1)^{2\ell-1}e_{2\ell-1}\left(  \alpha_{1},\hdots,\alpha_{2\ell
}\right)  <0$ and $a_{2\ell-2}=(-1)^{2}e_{2}\left(  \alpha_{1},\hdots,\alpha
_{2\ell}\right)  >0$.\bigskip

Thus $c_{s-2}<0$ or $c_{2\ell+s-5}<0$.
 The claim has been proved. 

\end{description}

%\begin{example}
\bigskip 
\noindent {\sf Example of Claim 3:} {\it Again, $f=\left(x^2-2(1)\cos\left(\dfrac{7\pi}{24}\right)x+(1)^2\right)\left(x^2-2r_2\cos\left(\dfrac{10\pi}{24}\right)x+r_2^2\right)$, so $s=3$, \\ $g=x^2+b_1x+b_0$, and $(p,q)$ is the intersection point of $L_1$ with $L_2$. 
\begin{center} \includegraphics[width=2in]{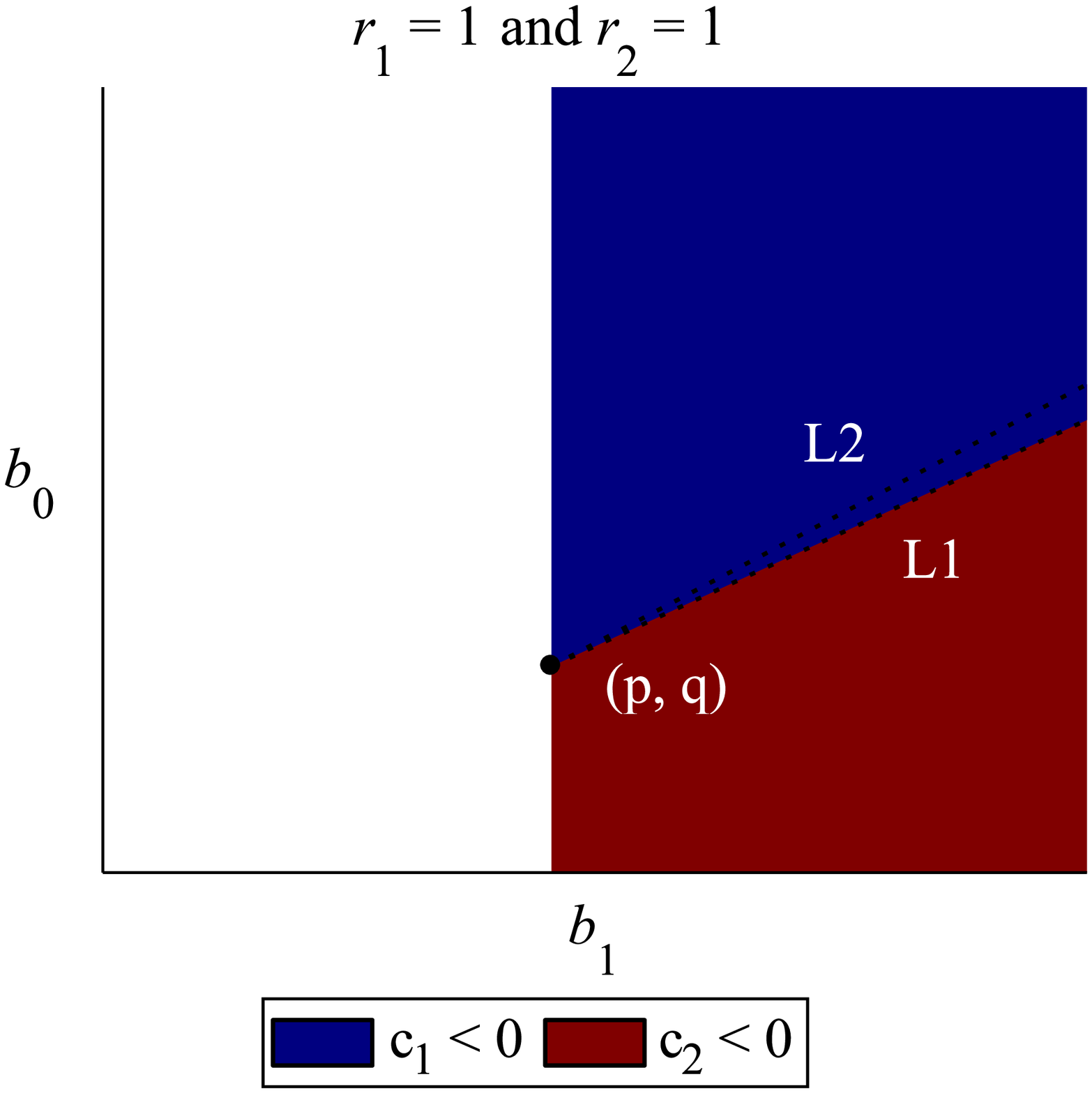} \hspace{1cm} \includegraphics[width=2in]{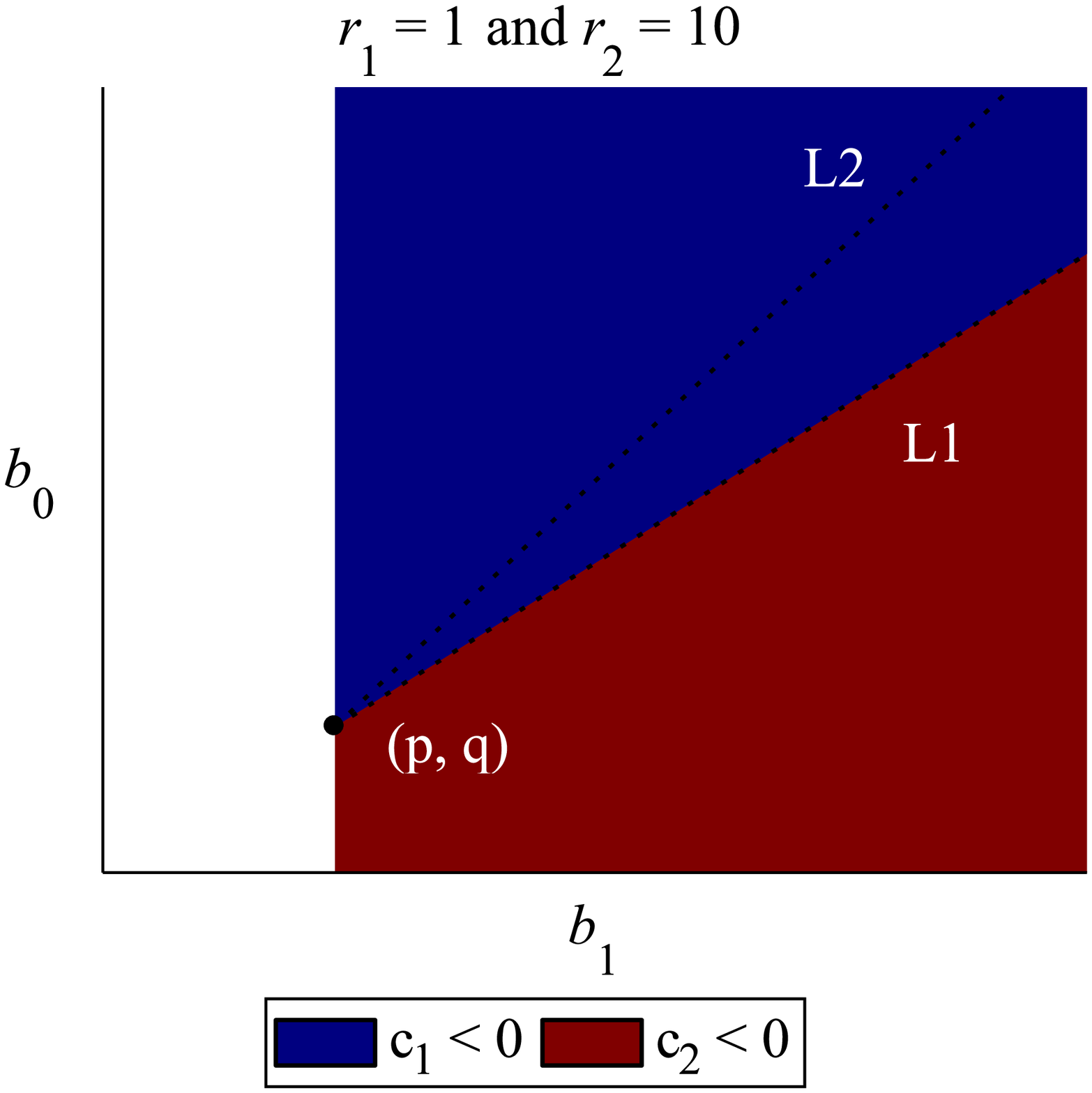} \end{center} 
For any point with $b_{s-2}=b_1>p$, we can see that either $c_1<0$ or $c_2<0$.} \bigskip
%\end{example}

\begin{description}[align=left,labelwidth=1em]
\item[\sf Claim 4:] $\underset{r_{\ell}^{\left(  3\right)  }>0}{\exists
}\ \underset{r_{\ell}\geq r_{\ell}^{\left(  3\right)  }}{\forall
}\ \underset{b_{s-2}\leq p}{\underset{b_{0},\ldots,b_{s-2}}{\forall}%
}\ c_{2\ell+s-2}<0$.

Note%
\[
\underset{r_{\ell}>0}{\forall}\ \underset{b_{s-2}\leq p}{\underset{b_{0}%
,\ldots,b_{s-2}}{\forall}}\ \ \dfrac{a_{2\ell-2}d_{0}-a_{1}d_{1}}{e_{1}\left(
\alpha_{1},\ldots,\alpha_{2\ell}\right)  \left(  a_{0}a_{2\ell-2}%
-a_{1}a_{2\ell-3}\right)  }<1\ \ \Longrightarrow\ \ c_{2+s-2}<0
\]

since%
\[%
\begin{array}
[c]{crl}
& c_{2\ell+s-2} & <0\\
\iff & b_{s-2} & <-a_{2\ell-1}\\
\Longleftarrow & p_{} & <-a_{2\ell-1}\ \ \ \,\text{since }b_{s-2}\leq p\\
\iff & \dfrac{a_{2\ell-2}d_{0}-a_{1}d_{1}}{a_{0} a_{2\ell-2}-a_{1}a_{2\ell-3}}
& <e_{1}\left(  \alpha_{1},\ldots,\alpha_{2\ell}\right) \\
\iff & \dfrac{a_{2\ell-2}d_{0}-a_{1}d_{1}}{e_{1}\left(  \alpha_{1}%
,\ldots,\alpha_{2\ell}\right)  \left(  a_{0}a_{2\ell-2}-a_{1}a_{2\ell
-3}\right)  } & <1.
\end{array}
\]
Once more, $e_{1}\left( \alpha_{1}%
,\ldots,\alpha_{2\ell}\right) >0$ from Lemma \ref{lem:epos}. \bigskip

Let $e_{k}=e_{k}\left(  \alpha_{1},\ldots,\alpha_{2\ell}\right)  $.
Note
\begin{align*}
&  \dfrac{a_{2\ell-2}d_{0}-a_{1}d_{1}}{e_{1}\left(  \alpha_{1},\ldots
,\alpha_{2\ell}\right)  \left(  a_{0}a_{2\ell-2}-a_{1}a_{2\ell-3}\right)  }\\
=\  &  \dfrac{-e_{2}\sum_{i=2}^{s-2}(-1)^{2\ell-i}e_{2\ell-i}b_{(s-2)-i}%
+e_{2\ell-1}\left(  e_{1}b_{s-4}-b_{s-5}-e_{4}\right)  }{e_{1}\left(
e_{2\ell}e_{2}-e_{2\ell-1}e_{3}\right)  }.%
\end{align*}

Note
\[
\ \underset{b_{0},\ldots,b_{s-2}}{\forall}\ \lim_{r_{\ell}\rightarrow\infty
}\dfrac{-e_{2}\sum_{i=2}^{s-2}(-1)^{2\ell-i}e_{2\ell-i}b_{(s-2)-i}+e_{2\ell
-1}\left(  e_{1}b_{s-4}-b_{s-5}-e_{4}\right)  }{e_{1}\left(  e_{2\ell}%
e_{2}-e_{2\ell-1}e_{3}\right)  }=0
\]
since
\begin{align*}
\deg_{r_{\ell}}\left(  -e_{2}\sum_{i=2}^{s-2}(-1)^{2\ell-i}e_{2\ell
-i}b_{(s-2)-i}+e_{2\ell-1}\left(  e_{1}b_{s-4}-b_{s-5}-e_{4}\right)  \right)
&  \leq4\\
\deg_{r_{\ell}}\left(  e_{1}\left(  e_{2\ell}e_{2}-e_{2\ell-1}e_{3}\right)
\right)   &  =5.
\end{align*}
The claim has been proved. 
\end{description}

%\begin{example} 
\bigskip
\noindent {\sf Example of Claim 4:} {\it Once more, $f=\left(x^2-2(1)\cos\left(\dfrac{7\pi}{24}\right)x+(1)^2\right)\left(x^2-2r_2\cos\left(\dfrac{10\pi}{24}\right)x+r_2^2\right)$, so $s=3$, $g=x^2+b_1x+b_0$, and $(p,q)$ is the intersection point of $L_1$ with $L_2$. 
\begin{center} \includegraphics[width=2in]{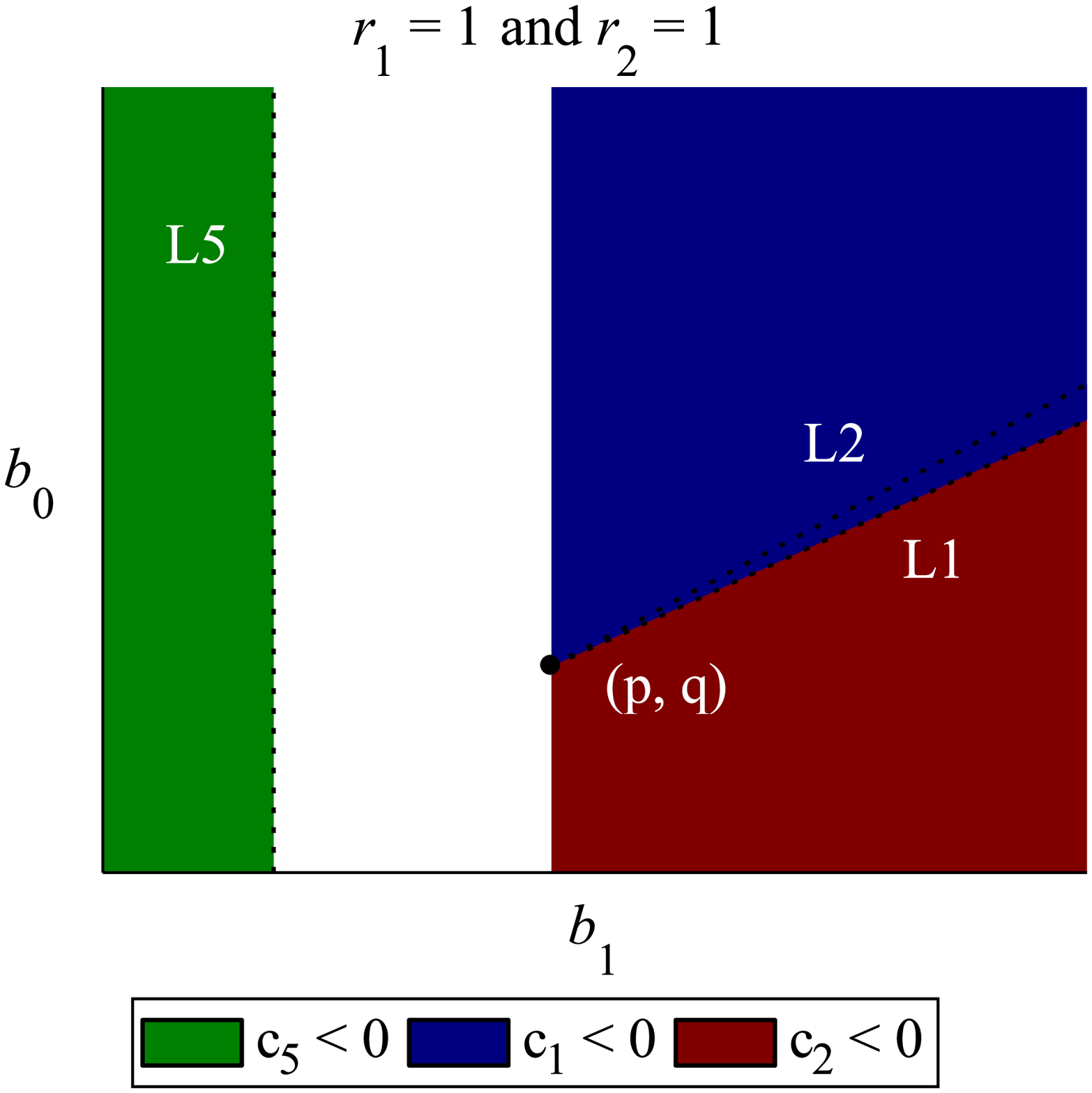} \hspace{1cm} \includegraphics[width=2in]{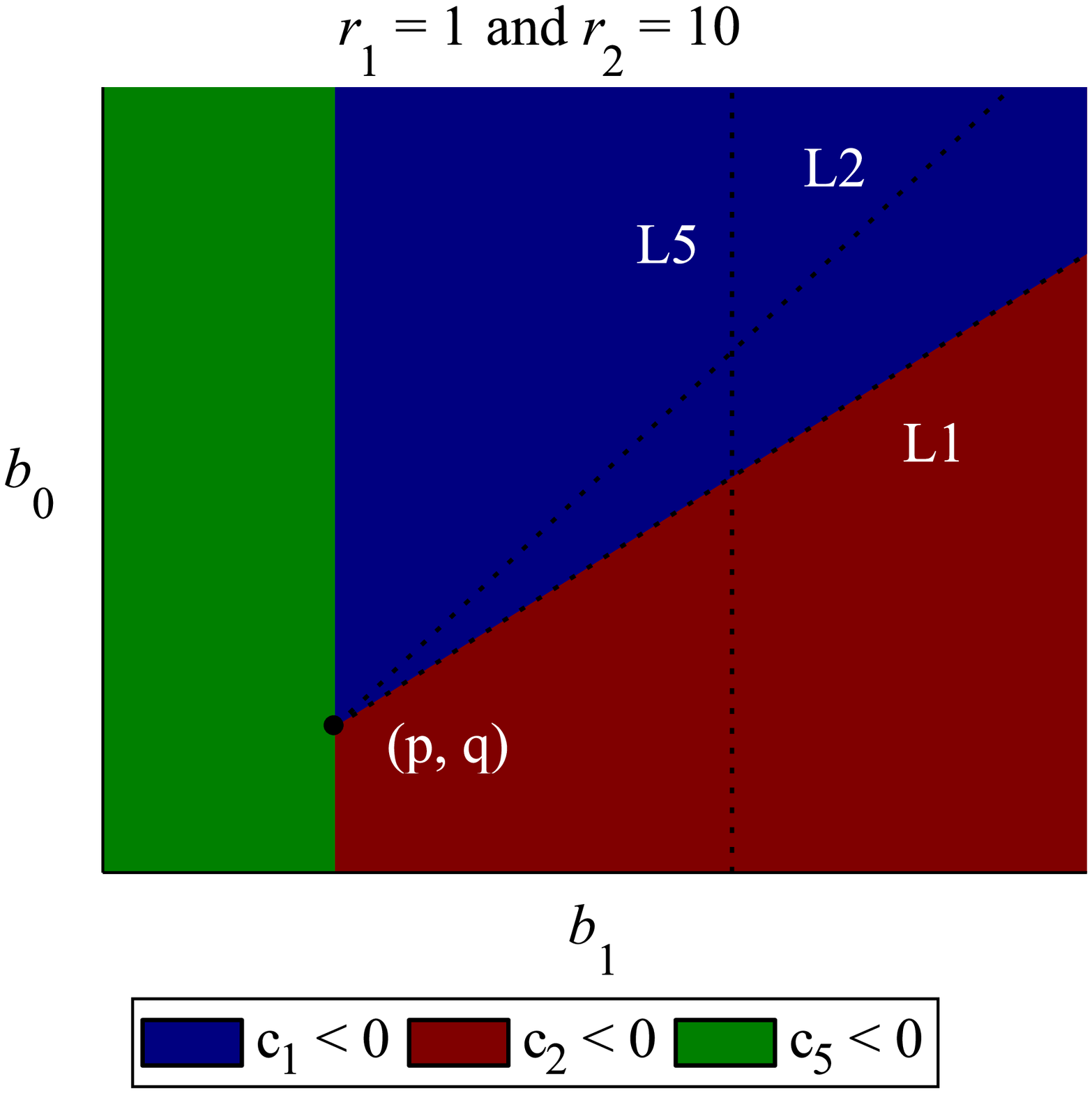} \end{center}
Notice that when $r_2=1$, there is a region of points with $b_{s-2}=b_1 \leq p$ where $c_{2\ell+s-2}=c_5$ is not negative.  However, when $r_2=10$, $c_5<0$ for all points where $b_1 \leq p$.}
\bigskip
% \end{example}

\noindent We continue with the proof of the lemma. \bigskip

\noindent From Claim 1, when $\ell=s=2$, for some $r_{\ell}^{\left(  1\right)  }>0$ we have
$\underset{r_{\ell}>r_{\ell}^{(1)}}{\forall}\ \ \underset{b_{0}}{\forall}\ \ c_{2\ell+s-5}%
<0\ \vee\ c_{2\ell+s-2}<0$.

\noindent From Claim 3, when $s \geq3$, for some $r_{\ell}^{\left(  2\right)  }>0$ we have
\[\underset{r_{\ell}>r_{\ell}^{\left(  2\right)  }}{\forall}\ \ \underset{b_{s-2}%
>p}{\underset{b_{0},\ldots,b_{s-2}}{\forall}}\ \ c_{s-2}<0\ \vee\ c_{2\ell+s-5}<0.\]

\noindent From Claim 4, when $s \geq3$, for some $r_{\ell}^{\left(  3\right)  }>0$ we have
$\underset{r_{\ell}\geq r_{\ell}^{\left(  3\right)  }}{\forall}\ \ \underset{b_{s-2}%
\leq p}{\underset{b_{0},\ldots,b_{s-2}}{\forall}}\ \ c_{2\ell+s-2}<0$.

\noindent Then for $r_{\ell}^{*}=\max\{r_{\ell}^{(1)},r_{\ell}^{(2)},r_{\ell}^{(3)}\}$, we have%

\[
\underset{r_{\ell}\geq r_{\ell}^{*} }{\forall}\ \left[  \left[\underset{b_{s-2}%
>p}{\underset{b_{0},\ldots,b_{s-2}}{\forall}}\ c_{s-2}<0\ \vee\ c_{2\ell+s-5}<0 \right]\ \ \wedge\ \ \left[\underset{b_{s-2}\leq
p}{\underset{b_{0},\ldots,b_{s-2}}{\forall}}\ c_{2\ell+s-5}<0\ \vee\ c_{2\ell
+s-2}<0\right]\right].
\] \smallskip

\noindent Hence
\[
\underset{r_{\ell}>0}{\exists}\ \ \underset{b_{0},\ldots,b_{s-2}}{\forall
}\ \ \underset{k\in\left\{  s-2,\ 2\ell+s-5,\ 2\ell+s-2\right\}  }{\bigvee}%
\ \ c_{k}<0.
\] 
The lemma has finally been proved.
\end{proof}

\subsection{Concerning Angles in Quadrant 2,
$\dfrac{\pi}{2}\leq\phi\leq\pi$}\label{sec:q2}

Finally, we present the proof for the case when some angles fall in quadrant 2, including the axes.  We convert the optimality claim into a statement on the separation of two sets in Lemma \ref{lem:ch}.  Then we prove that for any polynomial for which Curtiss' bound is optimal, there is a radius such that we can multiply by a linear or quadratic factor with angle $\dfrac{\pi}{2}\leq\phi\leq\pi$ and this radius while maintaining the optimality of the bound. Inductively, we can extend this argument to any collection of angles in quadrant 2. 

\begin{center}
\begin{tikzpicture}
[edge from parent fork down]
\node  {Some Angles in Quadrant 2}[sibling distance = 2.5cm, level distance = 1cm]
    child {node {One New Quadrant 2 Factor} 
    child {node {Separation of Sets}}};
\end{tikzpicture}
\end{center}

\noindent The main challenge of this section was in the One New Quadrant 2 Factor Lemma (Lemma \ref{lem:extension}).  To prove a separation between the two necessary sets, we examined the Euclidean distance between them and proved it must be greater than zero, a surprisingly non-trivial task.

\begin{lemma}[Some Angles in Quadrant 2]
\label{lem:reduce}We have%
\[
\underset{\frac{\pi}{2}\leq\phi_{1}\leq\cdots\leq\phi_{k}<\pi}{\forall
}\ \ \underset{0<\theta_{1}\leq\cdots\leq\theta_{\ell}<\frac{\pi}{2}}{\forall
}\ \ \underset{p_{1},\ldots,p_{t}>0}{\exists}\ \ \underset{r_{\phi_{1}}%
,\ldots,r_{\phi_{k}}>0}{\exists}\ \ \underset{r_{\theta_{1}},\ldots
,_{\theta_{\ell}}>0}{\forall}\ \ \operatorname*{opt}\left( f_{\theta,r_{\theta}}
\,f_{\phi,r_{\phi}}\,f_{\pi,p}\right)  =\operatorname*{opt}\left(
f_{\theta,r_{\theta}}\right).
\]

\end{lemma}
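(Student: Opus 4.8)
The plan is to reduce Lemma~\ref{lem:reduce} to a one-factor-at-a-time statement, the One New Quadrant~2 Factor Lemma (Lemma~\ref{lem:extension}), and then iterate. Fix $\frac{\pi}{2}\le\phi_{1}\le\cdots\le\phi_{k}<\pi$ and $0<\theta_{1}\le\cdots\le\theta_{\ell}<\frac{\pi}{2}$, and induct on $m=k+t$, the number of factors to be adjoined to $f_{\theta,r_{\theta}}$ (each a quadratic with angle in $[\frac{\pi}{2},\pi)$ or a linear factor $x+p$). The base case $m=0$ is trivial: $f_{\phi,r_{\phi}}=f_{\pi,p}=1$, the product is $f_{\theta,r_{\theta}}$, and there are no radii to choose. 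For the inductive step I would first record the easy half of the claimed equality, valid for \emph{every} choice of radii: because $\phi_{i}\in[\frac{\pi}{2},\pi)$ forces $\cos\phi_{i}\le0$, each factor $x^{2}-2r_{\phi_{i}}\cos\phi_{i}\,x+r_{\phi_{i}}^{2}$, and each $x+p_{i}$, has non-negative coefficients, so $f_{\phi,r_{\phi}}f_{\pi,p}$ does too; hence any Poincar\'{e} multiplier $g$ for $f_{\theta,r_{\theta}}$ is also one for the product, which gives $\operatorname*{opt}(f_{\theta,r_{\theta}}f_{\phi,r_{\phi}}f_{\pi,p})\le\operatorname*{opt}(f_{\theta,r_{\theta}})$. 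So the real content is the reverse inequality, for one well-chosen pair $(r_{\phi},p)$, uniformly in $r_{\theta}$ (and it is vacuous when $\ell=0$, since then the right-hand side is $0$).

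The reverse inequality is what Lemma~\ref{lem:extension} should supply one step at a time: if radii have already been fixed for the first $m-1$ adjoined factors so that, writing $h$ for their product, $\operatorname*{opt}(f_{\theta,r_{\theta}}h)=\operatorname*{opt}(f_{\theta,r_{\theta}})$ for all $r_{\theta}$, then for the $m$-th factor there is a radius keeping this identity true for all $r_{\theta}$. Peeling off one of the $k+t$ factors, applying the induction hypothesis to the remaining ones, and invoking Lemma~\ref{lem:extension} closes the induction. (The adjoined factors contribute $0$ to the Curtiss bound, since $\lceil\pi/\phi_{i}\rceil-2=0$ for $\phi_{i}\in[\frac{\pi}{2},\pi)$ and an angle equal to $\pi$ is excluded from $b$; this is what lets the proof of Theorem~\ref{thm:optimal} replace $\operatorname*{opt}(f_{\theta,r_{\theta}})$ by $b(f)$, but that observation is not needed within Lemma~\ref{lem:reduce}.)

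It remains to carry out Lemma~\ref{lem:extension}, which is the heart of the matter. By the Coefficients Lemma (Lemma~\ref{lem:bA}), together with the factorization $A_{d}(FG)=A_{d}(F)\,A_{d+\deg F}(G)$ it yields, the assertion ``$\operatorname*{opt}(F)>d$'' is equivalent to ``the row space of $A_{d}(F)$ meets the non-negative orthant only at the origin'', i.e.\ to a separation between a subspace and a cone; I would isolate this reformulation as the Separation of Sets Lemma (Lemma~\ref{lem:ch}). With $d=\operatorname*{opt}(f_{\theta,r_{\theta}})-1$, one must transport the separation that holds for the current product $f_{\theta,r_{\theta}}h$ (where $\operatorname*{opt}(f_{\theta,r_{\theta}}h)=\operatorname*{opt}(f_{\theta,r_{\theta}})>d$ by hypothesis), through convolution with the newly adjoined factor, to the enlarged product; equivalently, multiplying any sign-mixed $g\,(f_{\theta,r_{\theta}}h)$ with $\deg g\le d$ by the new factor must still leave a negative coefficient. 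The step I expect to be the main obstacle is that this has to hold \emph{uniformly over all} $r_{\theta}$, even though the subspace in play and the sizes of its coordinates vary unboundedly with $r_{\theta}$. The plan is to bound below, by a positive constant independent of $r_{\theta}$, the Euclidean distance between the two suitably normalized (hence compact) sets whose disjointness encodes the separation for the enlarged product; here one exploits that $d<\operatorname*{opt}(f_{\theta,r_{\theta}})\le b(f_{\theta,r_{\theta}})$ depends only on the angles, so the ambient dimension stays bounded and a compactness argument can extract a uniform gap. With that gap in hand, taking the new radius extreme enough preserves separation for every $r_{\theta}$, and proving the uniform positive distance is, I expect, the crux of the whole section.
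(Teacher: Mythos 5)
Your proposal takes essentially the same route as the paper: reduce the statement to the One New Quadrant~2 Factor Lemma (Lemma~\ref{lem:extension}) and induct on the number of adjoined factors, with that lemma in turn resting on the Separation of Sets reformulation (Lemma~\ref{lem:ch}) and a positive-distance argument; your single induction on $k+t$ is just the paper's two consecutive inductions (first on the quadratic quadrant-2 factors, then on the linear ones) merged into one, and your explicit remark that the easy inequality $\operatorname*{opt}(f_{\theta,r_{\theta}}f_{\phi,r_{\phi}}f_{\pi,p})\le\operatorname*{opt}(f_{\theta,r_{\theta}})$ holds for all radii is subsumed in the paper by the subadditivity used inside Lemma~\ref{lem:extension}. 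The one substantive point where you diverge is in sketching Lemma~\ref{lem:extension} itself: you aim for a distance gap uniform over all $r_{\theta}$ (needed to literally justify the $\exists r_{\phi}\,\forall r_{\theta}$ quantifier order), whereas the paper fixes $f$ and argues via convexity and continuity of $\varepsilon_h(r)$ at $r=0$ with $\varepsilon_h(0)=\varepsilon_f>0$ — a legitimate observation on your part, though your appeal to compactness would need care since the parameter space $(0,\infty)^{\ell}$ of the $r_{\theta}$ is not compact.
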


\begin{proof}
We will first induct on $k$, the number of quadratic factors with $\dfrac{\pi
}{2} \leq\phi_{i}<\pi$ in $f_{\phi,r_{\phi}}$, to show that
$\operatorname*{opt}\left( f_{\theta,r_{\theta}}\, f_{\phi,r_{\phi}}\right)
=\operatorname*{opt}\left(  f_{\theta,r_{\theta}}\right)  $.

\begin{description}[align=left,labelwidth=1em]
\item[\sf Base Case:] The claim holds for $k=0$. \medskip

It is trivially true.

\item[\sf Hypothesis:] Assume the claim holds for $k$ quadratic factors with $\dfrac{\pi
}{2} \leq\phi_{i}<\pi$.
\[
\underset{r_{\phi_{1}}, \ldots, r_{\phi_{k}}>0}{\exists}\ \ \underset{r
_{\theta_{1}},\ldots,r_{\theta_{\ell}}>0}{\forall}\ \ \operatorname*{opt}%
\left( f_{\theta,r_{\theta}}\, f_{\phi,r_{\phi}}\right)  =\operatorname*{opt}%
\left(  f_{\theta,r_{\theta}}\right)
\]

\item[\sf Induction Step:] Consider $k+1$ quadratic factors with $\dfrac{\pi}{2}
\leq\phi_{i}<\pi$. \medskip

Assume $r_{\phi_{1}},\ldots,r_{\phi_{k}}>0$ are
such that the induction hypothesis holds. By the One More Quadrant 2 Factor Lemma (Lemma \ref{lem:extension}),
\begin{align*}
\underset{r_{\phi_{k+1}}>0}{\exists}\ \ \underset{r_{\theta_{1}},\ldots
,_{\theta_{\ell}}>0}{\forall}   &  \operatorname*{opt}\left(
f_{\theta,r_{\theta}}\, f_{\phi,r_{\phi}}\, \left(  x^{2}-2r_{\phi_{k+1}}\cos\phi_{k+1}\,x+r_{\phi_{k+1}}^{2}\right)\right) \\
&  =\operatorname*{opt}\left( f_{\theta,r_{\theta}}\, f_{\phi,r_{\phi}}\right)+\operatorname*{opt}\left(  x^{2}-2r_{\phi_{k+1}}\cos\phi_{k+1}%
\,x+r_{\phi_{k+1}}^{2}\right).  
\end{align*}

By the Curtiss Bound, (Theorem \ref{thm:bound}), $\operatorname*{opt}\left(  x^{2}-2r_{\phi_{k+1}%
}\cos\phi_{k+1}\,x+r_{\phi_{k+1}}^{2}\right)  =0$, so we have
\begin{align*}
\operatorname*{opt}\left(
f_{\theta,r_{\theta}}\, f_{\phi,r_{\phi}}\, \left(  x^{2}-2r_{\phi_{k+1}}\cos\phi_{k+1}\,x+r_{\phi_{k+1}}^{2}\right)\right)   &  =\operatorname*{opt}\left(  f_{\theta,r_{\theta}}\, f_{\phi,r_{\phi}}\right) \\
&  =\operatorname*{opt}\left(  f_{\theta,r_{\theta}}\right).
\end{align*}
Hence, we have $\underset{r_{\phi_{1}}, \ldots, r_{\phi_{k}}>0}{\exists
}\ \ \underset{r _{\theta_{1}},\ldots,r_{\theta_{\ell}}>0}{\forall
}\ \ \operatorname*{opt}\left( f_{\theta,r_{\theta}}\, f_{\phi,r_{\phi}}\right)  =\operatorname*{opt}\left(  f_{\theta,r_{\theta}}\right)  $.
\end{description} \bigskip

Now we will induct on $t$, the number of linear factors in $f_{\pi,p}$, to
show that
\[
\underset{p_{1},\ldots,p_{t}>0}{\exists}\ \ \underset{r_{\phi_{1}}, \ldots,
r_{\phi_{k}}>0}{\exists}\ \ \underset{r _{\theta_{1}},\ldots,r_{\theta_{\ell}%
}>0}{\forall}\ \ \operatorname*{opt}\left(  f_{\theta,r_{\theta}}
\,f_{\phi,r_{\phi}}\,f_{\pi,p}\right)  =\operatorname*{opt}\left(  f_{\theta
,r_{\theta}}\right).
\]

\begin{description}[align=left,labelwidth=1em]
\item[\sf Base Case:] The claim holds for $t=0$. \medskip

Trivially true.

\item[\sf Hypothesis:] Assume the claim holds for $t$ linear factors.
\[
\underset{p_{1},\ldots,p_{t}>0}{\exists}\ \ \underset{r_{\phi_{1}}, \ldots,
r_{\phi_{k}}>0}{\exists}\ \ \underset{r _{\theta_{1}},\ldots,r_{\theta_{\ell}%
}>0}{\forall}\ \ \operatorname*{opt}\left(  f_{\theta,r_{\theta}}
\,f_{\phi,r_{\phi}}\,f_{\pi,p}\right)  =\operatorname*{opt}\left(  f_{\theta
,r_{\theta}}\right)
\]

\item[\sf Induction Step:] Consider $t+1$ linear factors. \medskip

Assume
$p_{1},\ldots,p_{t},r_{\phi_{1}}, \ldots, r_{\phi_{k}}>0$ are such that the
induction hypothesis holds. By the One More Quadrant 2 Factor Lemma (Lemma \ref{lem:extension}),
\[
\underset{p_{t+1}>0}{\exists}\ \ \underset{r _{\theta_{1}},\ldots
,r_{\theta_{\ell}}>0}{\forall}\ \ \operatorname*{opt}\left(  f_{\theta,r_{\theta}}
\,f_{\phi,r_{\phi}}\,f_{\pi,p}\, \left(
x+p_{t+1}\right)\right)
=\operatorname*{opt}\left(
f_{\theta,r_{\theta}}
\,f_{\phi,r_{\phi}}\,f_{\pi,p}\right)+\operatorname*{opt}\left(  x+p_{t+1}\right).  
\]
By the Curtiss Bound (Theorem \ref{thm:bound}), $\operatorname*{opt}\left(  x+p_{t+1}\right)
=0$, so we have
\begin{align*}
\operatorname*{opt}\left(  f_{\theta,r_{\theta}}
\,f_{\phi,r_{\phi}}\,f_{\pi,p}\, \left(
x+p_{t+1}\right)\right)   &  =\operatorname*{opt}\left(
f_{\theta,r_{\theta}}
\,f_{\phi,r_{\phi}}\,f_{\pi,p}\right) \\
&  =\operatorname*{opt}\left(  f_{\theta,r_{\theta}}\right).
\end{align*}
Hence, we have $\underset{p_{1},\ldots,p_{t}>0}{\exists}\ \ \underset{r_{\phi
_{1}}, \ldots, r_{\phi_{k}}>0}{\exists}\ \ \underset{r _{\theta_{1}}%
,\ldots,r_{\theta_{\ell}}>0}{\forall}\ \ \operatorname*{opt}\left(  f_{\theta,r_{\theta}}
\,f_{\phi,r_{\phi}}\,f_{\pi,p}\right)  =\operatorname*{opt}%
\left(  f_{\theta,r_{\theta}}\right)  $.
\end{description}
\end{proof}

\noindent The above proof relies on the One More Quadrant 2 Factor Lemma (Lemma \ref{lem:extension}).  To prove this lemma, we will build additional notation and convert the optimality claim to a claim about separation of sets.  
Recall that%
\[
A_{s}=\left[
\begin{array}
[c]{cccccc}%
a_{0} & \cdots & \cdots & a_{n} &  & \\
& \ddots &  &  & \ddots & \\
&  & a_{0} & \cdots & \cdots & a_{n}%
\end{array}
\right]  \in\mathbb{R}^{\left(  s+1\right)  \times(s+n+1)}%
\]
and, from the Coefficients Lemma (Lemma \ref{lem:bA}), $\operatorname*{coeffs}\left(  gf\right)  =bA_{s}$.  We partition $A_{s}$ into two sub-matrices as $A_{s}=\left[
L_{s}|R_{s}\right]  $ where%
\[
L_{s}=\left[
\begin{array}
[c]{ccc}%
a_{0} & \cdots & a_{n-1}\\
& \ddots & \vdots\\
&  & a_{0}\\
&  & \\
&  & \\
&  &
\end{array}
\right]  \in\mathbb{R}^{\left(  s+1\right)  \times n}\ \ \ \text{and }%
R_{s}=\left[
\begin{array}
[c]{cccccc}%
a_{n} &  &  &  &  & \\
\vdots & \ddots &  &  &  & \\
\vdots &  & \ddots &  &  & \\
a_{0} &  &  & \ddots &  & \\
& \ddots &  &  & \ddots & \\
&  & a_{0} & \cdots & \cdots & a_{n}%
\end{array}
\right]  \in\mathbb{R}^{\left(  s+1\right)  \times\left(  s+1\right)  }.%
\]
Let%
\begin{align*}
c  &  =bR_{s}\ \ \in\mathbb{R}^{1\times\left(  s+1\right)  }\\
T_{s}  &  =R_{s}^{-1}L_{s}\ \ \in\mathbb{R}^{\left(  s+1\right)  \times n}.%
\end{align*}

\begin{lemma}[Separation of Sets]
\label{lem:ch}We have%
\[
\underset{g\neq0,\ \deg\left(  g\right)  \leq s}{\exists}%
\ \operatorname*{coeffs}\left(  gf\right)  \geq0\ \ \ \ \Longleftrightarrow
\ \ \ \ \operatorname*{ConvexHull}\left(  T_{s}\right)  \cap\mathbb{R}_{\geq
0}^{n}\neq\emptyset
\]
where $T_{s}$ is viewed as a set of row vectors.
\end{lemma}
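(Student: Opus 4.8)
The plan is to pass from the multiplier $g$ to a convex-geometry feasibility statement by means of the invertible substitution $b\mapsto c=bR_{s}$, after which the left-hand side becomes, almost verbatim, the assertion that $\operatorname*{ConvexHull}(T_{s})$ meets the non-negative orthant. There is no need to normalize $g$ first, since the lemma is already phrased in terms of all $g\neq0$ with $\deg(g)\le s$.

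First I would put $\operatorname*{coeffs}(gf)\ge 0$ in block form. Writing $g=bx_{s}$ with $b\in\mathbb{R}^{1\times(s+1)}$, Lemma \ref{lem:bA} gives $\operatorname*{coeffs}(gf)=bA_{s}=[\,bL_{s}\mid bR_{s}\,]$ from the partition $A_{s}=[L_{s}\mid R_{s}]$; hence the existence of $g\neq0$ with $\deg(g)\le s$ and $\operatorname*{coeffs}(gf)\ge0$ is the same as the existence of $b\in\mathbb{R}^{1\times(s+1)}\setminus\{0\}$ with $bL_{s}\ge0$ and $bR_{s}\ge0$. Next I would use the shape of $R_{s}$: since $f$ is monic, $a_{n}=1$, and the displayed $R_{s}$ is lower triangular with every diagonal entry equal to $1$, so $R_{s}$ is invertible with $\det R_{s}=1$, the map $b\mapsto c:=bR_{s}$ is a linear bijection of $\mathbb{R}^{1\times(s+1)}$, and $b=cR_{s}^{-1}$ gives $bL_{s}=cR_{s}^{-1}L_{s}=cT_{s}$. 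Under this change of variables the left-hand side of the lemma becomes: there is $c\in\mathbb{R}^{1\times(s+1)}\setminus\{0\}$ with $c\ge0$ and $cT_{s}\ge0$.

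Finally I would identify this with the convex-hull condition. Viewing $T_{s}$ as its $s+1$ row vectors in $\mathbb{R}^{n}$, every point of $\operatorname*{ConvexHull}(T_{s})$ is $\lambda T_{s}$ with $\lambda$ in the standard simplex (non-negative entries summing to $1$), and $\lambda T_{s}=\sum_{i}\lambda_{i}(\text{row }i\text{ of }T_{s})$. If $c\ge0$, $c\neq0$, $cT_{s}\ge0$, then, $c$ being a nonzero non-negative vector, the scalar $\sigma:=\sum_{i}c_{i}$ is strictly positive, so $\lambda:=c/\sigma$ lies in the simplex and $\lambda T_{s}=(cT_{s})/\sigma\in\mathbb{R}_{\ge0}^{n}$, witnessing $\operatorname*{ConvexHull}(T_{s})\cap\mathbb{R}_{\ge0}^{n}\neq\emptyset$. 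Conversely, a point $v=\lambda T_{s}\in\mathbb{R}_{\ge0}^{n}$ with $\lambda$ in the simplex yields $c:=\lambda$ with $c\ge0$, $c\neq0$ (its entries sum to $1$), and $cT_{s}=v\ge0$. Chaining the three equivalences proves the lemma.

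The arithmetic is light; the one point that genuinely needs care — and the only real obstacle — is the structural observation that $R_{s}$ is lower triangular with unit diagonal, which is precisely where monicity of $f$ ($a_{n}=1$) enters and is what makes $b\leftrightarrow c$ a bijection; I would also re-verify the index ranges in $A_{s}=[L_{s}\mid R_{s}]$ so that $bR_{s}$ is exactly the top block $c$ of $\operatorname*{coeffs}(gf)$ and $bL_{s}$ the complementary low-degree block, matching the displayed $L_{s}$ and $R_{s}$.
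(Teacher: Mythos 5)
Your proposal is correct and follows essentially the same route as the paper: both pass from $b$ to $c=bR_{s}$ using Lemma \ref{lem:bA} and the factorization $\operatorname*{coeffs}(gf)=(bR_{s})[R_{s}^{-1}L_{s}\mid I]=[cT_{s}\mid c]$, and then normalize the nonzero non-negative $c$ to the simplex to read off the convex-hull condition. Your explicit check that $R_{s}$ is lower triangular with unit diagonal (hence invertible, by monicity of $f$) is a detail the paper leaves implicit, but it is not a different argument.
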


\begin{proof}
Note that
\begin{align*}
\operatorname*{coeffs}\left(  gf\right) & =bA_{s}\ \ \ \ \ \text{(from Lemma \ref{lem:bA})}  \\
&  =b\left(  R_{s}R_{s}^{-1}\right)  A_{s}\\
&  =\left(  bR_{s}\right)  \left(  R_{s}^{-1}A_{s}\right) \\
&  =\left(  bR_{s}\right)  \left(  R_{s}^{-1}\left[  L_{s}|R_{s}\right]
\right) \\
&  =\left(  bR_{s}\right)  \left[  R_{s}^{-1}L_{s}|R_{s}^{-1}R_{s}\right] \\
&  =\left(  bR_{s}\right)  \left[  R_{s}^{-1}L_{s}|I\right] \\
&  =c\left[  T_{s}|I\right]  \ \ \ \ \ \ \ \ \text{where\ }c=bR_{s}%
\ \ \,\text{and\ }T_{s}=R_{s}^{-1}L_{s}.%
\end{align*}

\noindent Thus we have
\begin{align*}
&  \ \ \underset{g\neq0,\ \deg\left(  g\right)  \leq s}{\exists}%
\ \operatorname*{coeffs}\left(  gf\right)  \geq0\\
\Longleftrightarrow &  \ \ \underset{c\neq0}{\exists}\ c\left[  T_{s}%
|I\right]  \geq0\\
\Longleftrightarrow &  \ \ \underset{c\neq0}{\exists}\ cT_{s}\geq
0\ \ \text{and }c\geq0\\
\Longleftrightarrow &  \ \ \underset{c\geq0,\ c\neq0}{\exists}\ cT_{s}\geq0\\
\Longleftrightarrow &  \ \ \underset{c\geq0,\ c_{0}+\cdots+c_{s}=1}{\exists
}\ cT_{s}\geq0\\
\Longleftrightarrow &  \ \ \operatorname*{ConvexHull}\left(  T_{s}\right)
\cap\mathbb{R}_{\geq0}^{n}\neq\emptyset.
\end{align*}
\end{proof}

\begin{notation}
Note

\begin{itemize}
\item $h_{\phi,r}=x+r$ when $\phi=\pi$

\item $h_{\phi,r}=x^{2}-2r\cos\phi\,x+r^{2}$ when $\dfrac{\pi}{2} \leq \phi <\pi$

\item $P=\left\{  f\in\mathbb{R}[x]:f(x)>0\ \ \text{for}\ \ x\geq0\right\}  $
\end{itemize}
\end{notation}

\begin{lemma}[One New Quadrant 2 Factor]
\label{lem:extension} $\underset{f\in P,\ \deg(f)\geq1}{\forall
}\ \ \underset{\frac{\pi}{2}\leq\phi \leq \pi}{\forall}$
$\ \ \underset{r>0}{\exists}\ \ \operatorname*{opt}\left( f\,  h_{\phi,r}\right)
=\operatorname*{opt}\left(
f\right)+\operatorname*{opt}\left(  h_{\phi,r}\right)  $
\end{lemma}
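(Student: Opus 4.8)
The plan is to reduce the statement to a single existence claim about radii and then attack it through the separation machinery of Lemma~\ref{lem:ch}. First, since $\tfrac{\pi}{2}\le\phi\le\pi$, the polynomial $h_{\phi,r}$ already has non-negative coefficients — its middle coefficient $-2r\cos\phi$ is non-negative when $\phi<\pi$, and $h_{\pi,r}=x+r$ — and $h_{\phi,r}(x)>0$ for $x\ge 0$; hence $\operatorname{opt}(h_{\phi,r})=0$ for every $r>0$ (this is also a special case of Theorem~\ref{thm:bound}). So the claim is equivalent to: \emph{there exists $r>0$ with $\operatorname{opt}(f\,h_{\phi,r})=\operatorname{opt}(f)$}. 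Write $m=\operatorname{opt}(f)$, $n=\deg f$, $e=\deg h_{\phi,r}\in\{1,2\}$; I may assume $m\ge 1$, since if $m=0$ then $f$, and therefore $f\,h_{\phi,r}$, has non-negative coefficients and the claim is immediate. One of the two inequalities holds for \emph{every} $r$: if $g$ realizes $\operatorname{opt}(f)$, then $gf$ and $h_{\phi,r}$ both have non-negative coefficients, so $(gf)\,h_{\phi,r}=g(f\,h_{\phi,r})$ does too; as $f\,h_{\phi,r}\in P$, this exhibits $g$ as a multiplier of $f\,h_{\phi,r}$, whence $\operatorname{opt}(f\,h_{\phi,r})\le m$. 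So the whole content is to produce one $r>0$ for which $f\,h_{\phi,r}$ admits no multiplier of degree $\le m-1$, i.e.\ $\operatorname{opt}(f\,h_{\phi,r})\ge m$.

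Next I would translate this into a separation statement. Writing $T^{F}_{s}$ for the matrix $T_s$ associated to a monic polynomial $F$ (regarded as a set of row vectors), Lemma~\ref{lem:ch} says that $f\,h_{\phi,r}$ has a multiplier of degree $\le m-1$ exactly when $\operatorname{ConvexHull}(T^{f h_{\phi,r}}_{m-1})$ meets $\mathbb{R}^{\,n+e}_{\ge 0}$, so I must find $r$ making these two sets disjoint. The analogous disjointness for $f$ is part of the hypothesis: since $\operatorname{opt}(f)=m$, Lemma~\ref{lem:ch} gives $\operatorname{ConvexHull}(T^{f}_{m-1})\cap\mathbb{R}^{\,n}_{\ge 0}=\emptyset$, and because $\operatorname{ConvexHull}(T^{f}_{m-1})$ is a compact polytope while $\mathbb{R}^{\,n}_{\ge 0}$ is closed and convex, the Euclidean distance between them is a genuine $\delta>0$; equivalently there is a nonzero $v\in\mathbb{R}^{\,n}_{\ge 0}$ with $\langle v,\,T^{f}_{m-1}[i]\rangle<0$ for every row $i$. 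Using the identity $T^{F}_{s}[i]=-\operatorname{coeffs}(x^{\deg F+i}\bmod F)$, this $v$ is the same data as a sequence $u$ satisfying the linear recurrence attached to $f$ with $u_0,\dots,u_{n-1}\ge 0$ (not all zero) and $u_n,\dots,u_{n+m-1}>0$ (and, by a standard theorem of the alternative, one may even take $u_0,\dots,u_{n+m-1}$ all strictly positive, which provides wiggle room later).

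The core step — and the one I expect to be the main obstacle — is to transfer this separation from $f$ to $f\,h_{\phi,r}$ for a suitable $r$, i.e.\ to show that the Euclidean distance between $\operatorname{ConvexHull}(T^{f h_{\phi,r}}_{m-1})$ and $\mathbb{R}^{\,n+e}_{\ge 0}$ is still positive. Every $f$-recurrent sequence is also $(f\,h_{\phi,r})$-recurrent, and for all but finitely many $r$ the $(f\,h_{\phi,r})$-recurrent sequences are the direct sum of the $f$-recurrent ones and the $h_{\phi,r}$-recurrent ones, the latter spanned by $r^{k}\cos(k\phi),\ r^{k}\sin(k\phi)$ (by $(-r)^{k}$ when $\phi=\pi$). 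I would look for a certificate $y=u'+w$ for $f\,h_{\phi,r}$, with $u'$ re-chosen inside the certificate cone of $f$ and $w$ an $h_{\phi,r}$-recurrent correction, that is strictly positive on the longer window $\{0,\dots,n+e+m-1\}$; by Lemma~\ref{lem:ch} such a $y$ forces $\operatorname{opt}(f\,h_{\phi,r})\ge m$. The two new indices $n+m,\dots,n+m+e-1$ carry the dominant weights $r^{k}$ of $w$, which is what lets the correction overcome the possibly-negative values of $u'$ there; the difficulty is that $w$ must change sign — for $\phi\ge\tfrac{\pi}{2}$ it cannot be kept non-negative on three consecutive indices — so a large correction creates large negative dips at the lower indices, and the entries of $T^{f h_{\phi,r}}_{m-1}$ themselves grow without bound as $r\to\infty$, so the relevant polytopes form an \emph{unbounded} family and no soft compactness argument applies. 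The resolution must be a direct estimate: take $r$ past a threshold determined by $u'$ (equivalently, by the reciprocal-polynomial symmetry, take $r$ small) so that on the ``old'' coordinates the oscillating correction is dominated — roughly $|u'_{n+m+j}|/r^{\,n+m+j}<u'_{k}/r^{\,k}$ for the lower indices $k$ — while it is still large enough on the two ``new'' coordinates. Carrying this out, and in particular tracking the signs of $\cos(k\phi)$ and $\sin(k\phi)$ across the window in the $e=2$ case, is the surprisingly non-trivial heart of the proof; as in Section~\ref{sec:Q1} it is cleanest to express the coefficients in question through the elementary symmetric functions of the roots. Once the distance is shown to be positive, Lemma~\ref{lem:ch} gives $\operatorname{opt}(f\,h_{\phi,r})\ge m$, and with the easy inequality and $\operatorname{opt}(h_{\phi,r})=0$ this yields $\operatorname{opt}(f\,h_{\phi,r})=\operatorname{opt}(f)=\operatorname{opt}(f)+\operatorname{opt}(h_{\phi,r})$, completing the proof.
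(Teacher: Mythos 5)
Your reductions are sound: $\operatorname{opt}(h_{\phi,r})=0$, the easy inequality $\operatorname{opt}(f\,h_{\phi,r})\le\operatorname{opt}(f)$ via composing multipliers, and the translation through Lemma~\ref{lem:ch} into a disjointness statement between a convex hull and the non-negative orthant all match the structure of the paper's argument. But the proposal stops exactly where the proof has to begin. You yourself label the transfer of the separation from $f$ to $f\,h_{\phi,r}$ as ``the core step'' and ``the surprisingly non-trivial heart of the proof,'' and what you offer for it is a plan --- build a dual certificate $y=u'+w$ with $w$ an $h_{\phi,r}$-recurrent oscillating correction, then control its sign changes by ``a direct estimate'' involving the signs of $\cos(k\phi)$ and $\sin(k\phi)$ across the window --- without carrying out any of the estimates. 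No threshold for $r$ is produced, no inequality is verified, and the $e=2$ sign-tracking that you flag as the hard part is left entirely open. As written, this is a research programme for the key step, not a proof of it, so the argument has a genuine gap.

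It is also worth noting that the difficulty you describe is partly an artifact of the direction you chose. You dismiss ``soft'' arguments on the grounds that the polytopes $\operatorname{ConvexHull}(T^{f h_{\phi,r}}_{m-1})$ form an unbounded family as $r\to\infty$; the paper instead sends $r\to 0^{+}$. There one does not need any certificate construction at all: after deleting the first $d=\deg h_{\phi,r}$ coordinates, the rows $T^{\ast}_{h,i}$ at $r=0$ coincide exactly with the rows $T_{f,i}$ (since $h$ degenerates to $x^{d}f$, the right block $R_{s-1}$ is unchanged and the left block is just $L_{f,s-1}$ shifted by $d$ zero columns), so the distance $\varepsilon_h(0)$ equals the distance $\varepsilon_f>0$ guaranteed by $\operatorname{opt}(f)=s$; continuity of $\varepsilon_h(r)$ at $r=0$ (argued in the paper via convexity) then yields $\varepsilon_h(r)>0$ for all small $r>0$. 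If you want to salvage your route, you must either supply the quantitative estimates for the oscillating correction in full, or switch to the small-$r$ limit where the problem collapses onto the known separation for $f$.
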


\begin{proof}
Let $f\in P$ and $h=f\, h_{\phi,r}$. We need to show $\underset{r>0}{\exists
}\ \operatorname*{opt}(h)=\operatorname*{opt}(f)$ since $\operatorname*{opt}%
(h_{\phi,r})=0$. We will divide the proof into several claims.

\begin{description}[align=left,labelwidth=1em]
\item[\sf Claim 1:] $\underset{r>0}{\exists}\ \operatorname*{opt}%
(h)=\operatorname*{opt}(f)$ $\ \ \ \ \Longleftarrow
\ \ \ \ \underset{r>0}{\exists}\ \operatorname*{CH}(T_{h,0}^{\ast}%
,\ldots,T_{h,s-1}^{\ast})\cap\mathbb{R}_{\geq0}^{n}=\emptyset$

where

\begin{itemize}
\item $n=\deg\left(  f\right)  $

\item $s=\operatorname*{opt}(f)$

\item $d=\deg(h)$

\item The $T_{h,i}$ are the rows of $T_{s-1}$ for $h$.

\item $T_{h,i}^{\ast}$ is obtained from $T_{h,i}$ by deleting the first $d$ elements.
\end{itemize}

Note%
\[%
\begin{array}
[c]{cl}
& \ \operatorname*{opt}(h)=\operatorname*{opt}(f)\\
\iff & \ \operatorname*{opt}(h)\geq s\ \ \ \ \ \text{since}%
\ \ \operatorname*{opt}(h)\leq\operatorname*{opt}(f)+\operatorname*{opt}%
(h_{\phi,r})=\operatorname*{opt}(f)=s\\
\iff & \ \lnot\underset{g\neq0,\ \deg(g)<s}{\exists}\operatorname*{coeffs}%
(g\, h)\geq0\\
\iff & \ \lnot\left(  \operatorname*{CH}(T_{h,0},\ldots,T_{h,s-1}%
)\cap\mathbb{R}_{\geq0}^{n+d}\neq\emptyset\right)\ \ \ \ \text{from Lemma } \ref{lem:ch}  \\
\iff & \ \operatorname*{CH}(T_{h,0},\ldots,T_{h,s-1})\cap\mathbb{R}_{\geq
0}^{n+d}=\emptyset\\
\Longleftarrow & \ \operatorname*{CH}(T_{h,0}^{\ast},\ldots,T_{h,s-1}^{\ast
})\cap\mathbb{R}_{\geq0}^{n}=\emptyset.
\end{array}
\] \medskip

\item[\sf Claim 2:] $\underset{r>0}{\exists}\ \operatorname*{opt}%
(h)=\operatorname*{opt}(f)$ $\ \ \ \ \Longleftarrow
\ \ \ \ \underset{r>0}{\exists}\varepsilon_{h}\left(  r\right)  >0$

where $\varepsilon_{h}\left(  r\right)  $ stands for the minimum Euclidean
distance between$\ \operatorname*{CH}(T_{h,0}^{\ast},\ldots,T_{h,s-1}^{\ast
})\ $and $\mathbb{R}_{\geq0}^{n}$,$\ $that is,%
\[
\varepsilon_{h}(r)\ :=\ \min_{\substack{x\in\operatorname*{CH}(T_{h,0}^{\ast
},\ldots,T_{h,s-1}^{\ast})\\y\in\mathbb{R}_{\geq0}^{n}}}\Vert x-y\Vert\ .
\]

Immediate from the above claim. \medskip

\item[\sf Claim 3:] $\varepsilon_{h}\left(  r\right)  $ is continuous at $r=0$. \medskip

Note
\begin{align*}
\varepsilon_{h}(r)  &  =\ \min_{\substack{c\in\mathbb{R}_{\geq0}^{s}%
\\c_{0}+\cdots+c_{s-1}=1\\y\in\mathbb{R}_{\geq0}^{n}}}\ \left\Vert \sum
_{i=0}^{s-1}c_{i}T_{h,i}^{\ast}\ -\ y\right\Vert \\
&  =\ \min_{\substack{c\in\mathbb{R}_{\geq0}^{s}\\c_{0}+\cdots+c_{s-1}%
=1\\y\in\mathbb{R}_{\geq0}^{n}}}\ \left\Vert \left[  \sum_{i=0}^{s-1}%
c_{i}T_{h,i,1}^{\ast}\ -\ y_{1}\ \ ,\ldots,\ \ \sum_{i=0}^{s-1}c_{i}%
T_{h,i,n}^{\ast}\ -\ y_{n}\right]  \right\Vert \\
&  =\ \underset{z_{0}+\cdots+z_{s-1}=1}{\underset{z\in\mathbb{R}_{\geq0}%
^{s+n}}{\min}}\ \left\Vert \left[  \sum_{i=0}^{s-1}z_{i}T_{h,i,1}^{\ast
}\ -\ z_{s}\ \ ,\ldots,\ \ \sum_{i=0}^{s-1}z_{i}T_{h,i,n}^{\ast}%
\ -\ z_{s+n-1}\right]  \right\Vert \ \ \ \ \text{where }z=(c,y)\\
&  =\ \underset{z\in C}{\min}\ p\left(  z,r\right)
\end{align*}
where $p(z,r)$ is Euclidean distance and
\begin{align*}
C  &  =\left\{  z\in\mathbb{R}_{\geq0}^{s+n}:z_{1}+\cdots+z_{s}=1\right\}  .
\end{align*}

Note, since $p(z,r)$ is the distance between two closed sets, the
minimum distance is realized by a point in each set. Hence,
\[
\varepsilon_{h}(r)=\underset{z\in C}{\min}\ p\left(  z,r\right)
=\underset{z\in C}{\inf}\ p\left(  z,r\right)  .
\]

Note the following:

\begin{enumerate}
\item By Section 3.1.5 of \cite{B04}, $p(z,r)$ is a convex function since
$p(z,r)$ is a norm.

\item By Section 3.2.5 of \cite{B04}, $\varepsilon_{h}(r) =\underset{z\in
C}{\inf}\ p\left(  z,r\right)  $ is a convex function, since $C$ is convex and
$p(z,r)$ is bounded below.

\item By Corollary 3.5.3 in \cite{NP06}, since $\varepsilon_{h}(r)$ is a
convex function defined on a convex set $\mathbb{R}$, $\varepsilon_{h}(r)$ is
continuous on the relative interior of $\mathbb{R}$, $\operatorname*{ri}%
\left(  \mathbb{R}\right)  =\mathbb{R}$.

\item Hence, $\varepsilon_{h}(r)$ is continuous at $r=0$.
\end{enumerate} \medskip

\item[\sf Claim 4:] $\varepsilon_{h}\left(  0\right)  >0$. \medskip

The claim follows immediately from the following  subclaims.
Let $r=0$. Then $h=x^{d} \cdot f$.  
\begin{description}[align=left]
\item[\sf Subclaim 1]: $T_{h,i,j}^{\ast}=T_{f,i,j}$. Note that these are the entries
of $T_{h,s-1}^{*}$ and $T_{f,s-1}$. We will prove the claim using the fact
that $T_{s-1}=R_{s-1}^{-1}L_{s-1}$ for any $f$. \bigskip

Note $R_{f,s-1}=R_{h,s-1}$. This is clear from the definition of
$R_{s-1}$. Hence $R_{f,s-1}^{-1}=R_{h,s-1}^{-1}$. \bigskip

Note that $L_{f,i,j}=L_{h,i,j+d}$. This is clear from the definition of
$L_{s-1}$, since $L_{h,s-1}$ is composed of $L_{f,s-1}$ with $d$ columns of
zeroes added on the left. \bigskip

Note that for $i=0,\hdots ,s-1$ and $j=0,\hdots ,n-1$,
\begin{align*}
T_{h,i,j}^{*}  &  =T_{h,i,j+d}\\
&  =\sum_{k=0}^{s-1} R_{h,i,k}^{-1}L_{h,k,j+d}\\
&  =\sum_{k=0}^{s-1} R_{f,i,k}^{-1}L_{f,k,j}\\
&  =T_{f,i,j}.%
\end{align*}
Hence $T_{h,i,j}^{\ast}=T_{f,i,j}$. \bigskip

\item[\sf Subclaim 2:] $\varepsilon_{h}(0)=\varepsilon_{f}$ where $\varepsilon_{f}$ stands for the minimum Euclidean distance
between$\ \operatorname*{CH}(T_{f,0},\ldots,T_{f,s-1})\ $and $\mathbb{R}%
_{\geq0}^{n}$,$\ $that is,%
\[
\varepsilon_{f} :=\ \min_{\substack{x\in\operatorname*{CH}(T_{f,0}%
,\ldots,T_{f,s-1})\\y\in\mathbb{R}_{\geq0}^{n}}}\Vert x-y\Vert.
\]

To see this, note
\begin{align*}
\varepsilon_{h}(0)  &  =\min_{\substack{c\in\mathbb{R}_{\geq0}^{s}%
\\c_{0}+\cdots+c_{s-1}=1\\y\in\mathbb{R}_{\geq0}^{n}}}\ \left\Vert \sum
_{i=0}^{s-1}c_{i}T_{h,i}^{\ast}\ -\ y\right\Vert \\
&  =\min_{\substack{c\in\mathbb{R}_{\geq0}^{s}\\c_{0}+\cdots+c_{s-1}%
=1\\y\in\mathbb{R}_{\geq0}^{n}}}\ \left\Vert \sum_{i=0}^{s-1}c_{i}%
T_{f,i}\ -\ y\right\Vert \\
&  =\min_{\substack{x\in\operatorname*{CH}(T_{f,0},\ldots,T_{f,s-1}%
)\\y\in\mathbb{R}_{\geq0}^{n}}}\Vert x-y\Vert\\
&  =\varepsilon_{f}.%
\end{align*}

\item[\sf Subclaim 3:] $\varepsilon_{f}>0$. \medskip

\noindent Note since $\operatorname*{opt}\left(  f\right)  =s$, we have
$\operatorname*{CH}(T_{f,0},\ldots,T_{f,s-1})\cap\mathbb{R}_{\geq0}%
^{n}=\emptyset$. Thus $\varepsilon_{f}>0$. 
\end{description} \medskip
\end{description} \bigskip

\noindent From the above four claims, we immediately have%
\[
\underset{r>0}{\exists}\ \operatorname*{opt}(h)=\operatorname*{opt}(f).
\]
Thus we have proved the lemma.
\end{proof}

%\[%
%\underset{\tau_{\ell+1}>0}{\exists}\operatorname*{opt}\left(  f_{\left(
%\theta_{1},\ldots,\theta_{\ell}\right)  ,\left(  \tau_{1},\ldots,\tau_{\ell
%}\right)  }h_{\theta_{\ell+1},\tau_{\ell+1}}\right)  =\operatorname*{opt}%
%\left(  f_{\left(  \theta_{1},\ldots,\theta_{\ell}\right)  ,\left(  \tau
%_{1},\ldots,\tau_{\ell}\right)  }\right)  +\operatorname*{opt}\left(
%h_{\theta_{\ell+1},\tau_{\ell+1}}\right)
%\]

\begin{example} Consider the following function, for which $b(f)=4$: 
$$f=\prod_{i=1}^3\left(x^2-2(10^{i-1})\cos(\theta_i)\, x+(10^{i-1})^2\right)
\;\;\text{where}\;\; \theta=\left\{\dfrac{7\pi}{24},\dfrac{10\pi}{24},\dfrac{11\pi}{24}\right\}.$$

\noindent When $h_{\phi,r}=x^2-2(10^0)\cos\left(\dfrac{14\pi}{24}\right)x+(10^0)^2$, we have $\operatorname*{opt}(f\,  h_{\phi,r})=3$.  
\\

\noindent When $h_{\phi,r}=x^2-2(10^3)\cos\left(\dfrac{14\pi}{24}\right)x+(10^3)^2$, we have $\operatorname*{opt}(f\,  h_{\phi,r})=b(f)=4$.\end{example}

\begin{example} Consider the following function, for which $b(f)=4$: 
$$f=\prod_{i=1}^4\left(x^2-2(10^{i-1})\cos(\theta_i)\, x+(10^{i-1})^2\right)
\;\;\text{where}\;\; \theta=\left\{\dfrac{7\pi}{24},\dfrac{10\pi}{24},\dfrac{11\pi}{24},\dfrac{14\pi}{24}\right\}.$$

\noindent  When $h_{\phi,r}=x+10^0$, we have $\operatorname*{opt}(f\,  h_{\phi,r})=3$.
\\
  
\noindent  When $h_{\phi,r}=x+10^4$, we have $\operatorname*{opt}(f\,  h_{\phi,r})=4$.
\end{example}

\medskip\noindent{\bf Acknowledgements}. Hoon Hong's work was 
supported by National Science Foundation (CCF: 2212461 and 1813340). 

\bibliographystyle{abbrv}
\bibliography{paper}

\end{document}